\tikzset{
    triangle/.style={
        draw,
        shape border rotate=0,
        regular polygon,
        regular polygon sides=3,
        fill=white,
        node distance=2cm,
        minimum height=4em
    }
}
\def\aftermath{\par\vspace{-\belowdisplayskip}\vspace{-\parskip}\vspace{-\baselineskip}}
\tikzset{
  bigblue/.style={circle, draw=blue!80,fill=blue!40,thick, inner sep=1.5pt, minimum size=5mm},
  bigred/.style={circle, draw=red!80,fill=red!40,thick, inner sep=1.5pt, minimum size=5mm},
  bigblack/.style={circle, draw=black!100,fill=black!40,thick, inner sep=1.5pt, minimum size=5mm},
  bluevertex/.style={circle, draw=blue!100,fill=blue!100,thick, inner sep=0pt, minimum size=2mm},
  redvertex/.style={circle, draw=red!100,fill=red!100,thick, inner sep=0pt, minimum size=2mm},
  blackvertex/.style={circle, draw=black!100,fill=black!100,thick, inner sep=0pt, minimum size=1.5mm},  
  whitevertex/.style={circle, draw=black!100,fill=white!100,thick, inner sep=0pt, minimum size=2mm},  
  smallblack/.style={circle, draw=black!100,fill=black!100,thick, inner sep=0pt, minimum size=1mm},
  smallwhite/.style={circle, draw=white!100,fill=white!100,thick, inner sep=0pt, minimum size=1mm},
  redvertexv2/.style={circle, draw=black!100,fill=red!100,thick, inner sep=0pt, minimum size=3.35mm},
  bluevertexv2/.style={circle, draw=black!100,fill=blue!100,thick, inner sep=0pt, minimum size=3.35mm},
  greenvertexv2/.style={circle, draw=black!100,fill=green!100,thick, inner sep=0pt, minimum size=3.35mm},
  blackvertexv2/.style={circle, draw=black!100,fill=black!100,thick, inner sep=0pt, minimum size= 3.35mm}
}
\title[A density bound for triangle-free $4$-critical graphs]{A density bound for triangle-free $4$-critical graphs}
\author[Moore]{Benjamin Moore}
\thanks{This work is part of the first author's PhD thesis, done at the University of Waterloo. \\ 
\hphantom{|} We acknowledge the support of the Natural Sciences and Engineering Research Council of Canada (NSERC),  [CGSD2 Grant No. 534801-2019], [CGSD3 Grant No. 2020-547516] \\
\hphantom{|} Cette recherche a \'{e}t\'{e} financ\'{e}e par le Conseil de recherches en sciences naturelles et en g\'{e}nie du Canada (CRSNG), [CGSD2 Grant No. 534801-2019], [CGSD3 Grant No. 2020-547516]}
\address[Benjamin Moore]{Institute of Computer Science, Charles University, Prague, Czech Republic }  
\email{brmoore@iuuk.mff.cuni.cz}
\author[Smith-Roberge]{Evelyne Smith-Roberge}
\address[Evelyne Smith-Roberge]{Department of Combinatorics and Optimization, University of Waterloo, Waterloo, ON, Canada}
\email{evelyne.smith-roberge@uwaterloo.ca}
\date{}
\newtheorem{thm}[equation]{Theorem}
\newtheorem{lemma}[equation]{Lemma}
\newtheorem{prop}[equation]{Proposition}
\newtheorem{conj}[equation]{Conjecture}
\newtheorem{cor}[equation]{Corollary}
\newtheorem{claim}{Claim}
\newtheorem{question}[equation]{Question}
\theoremstyle{definition}
\newtheorem{definition}[equation]{Definition}
\newtheorem{obs}[equation]{Observation}
\newtheorem*{ack}{Acknowledgements}
\newtheoremstyle{case}{}{}{\normalfont}{}{\itshape}{\normalfont:}{ }{}
\theoremstyle{case}
\numberwithin{equation}{section}
\newcommand{\ch}{\textnormal{ch}}
\date{}
\begin{document}
\begin{abstract}
We prove that every triangle-free $4$-critical graph $G$ satisfies $e(G) \geq \frac{5v(G)+2}{3}$. This result gives a unified proof that triangle-free planar graphs are $3$-colourable, as well as that graphs of girth at least five which embed in either the projective plane, torus, or Klein Bottle are $3$-colourable, which are results of  Gr\"{o}tzsch, Thomassen, and Thomas and Walls. Our result is nearly best possible, as Davies has constructed triangle-free $4$-critical graphs $G$ such that $e(G) = \frac{5v(G) + 4}{3}$. To prove this result, we  prove a more general result characterizing sparse $4$-critical graphs with few vertex-disjoint triangles.
\end{abstract}
\maketitle
\section{Introduction}
A \textit{$k$-colouring} of a graph $G$ is a map $f:V(G) \to \{1,\ldots,k\}$ such that for every edge $e = xy$, we have $f(x) \neq f(y)$. We say $G$ is $k$-colourable if there exists a $k$-colouring of $G$. The \textit{chromatic number} of $G$, denoted $\chi(G)$, is the minimum number $k$ such that $G$ has a $k$-colouring.

In general, determining whether a graph is $k$-colourable for any fixed $k \geq 3$ is a well-known NP-complete problem \cite{threecolouringhard}. As such, a great deal of research has focused on restricting this problem to specific classes of graphs. The most famous such result is the four colour theorem, which states that planar graphs are $4$-colourable.
The four colour theorem is notoriously difficult, and to date there are no proofs that do not require computer assistance. This difficulty surprisingly vanishes if we add the condition that the planar graphs are triangle-free: that is, contain no $K_{3}$ subgraph.

\begin{thm}[Gr\"{o}tzsch's Theorem, \cite{grotzsch1959}]\label{grotzsch}
Every triangle-free planar graph is $3$-colourable. 
\end{thm}

The original proof of this theorem is by no means easy, but does not require the use of computers. At first glance, it does not look particularly easy to generalize Gr\"{o}tzsch's Theorem: for instance, there are planar graphs which require four colours (for example, $K_{4}$), and non-bipartite triangle-free planar graphs (for example, any odd cycle which is not $K_{3}$). Worse, we cannot even use a weaker surface embeddability condition: for any orientable surface aside from the plane, the Gr\"{o}tzsch graph \textemdash a triangle-free graph with chromatic number four \textemdash embeds in the surface. See Figure \ref{Grotzschgraph} for an illustration of the Gr\"{o}tzsch graph. Further, building on the work of Younger \cite{Youngs}, Gimbel and Thomassen \cite{Gimbel1997ColoringGW} characterized the triangle-free projective planar graphs which are $3$-colourable. In particular, all non-bipartite quadrangulations of the projective plane require four colours. Nevertheless, by strengthening the triangle-free condition, more progress can be made on other surfaces.
Recall that the \textit{girth} of a graph is the length of a shortest cycle in the graph (and if the graph is cycle-free, we define its girth as being infinite). 
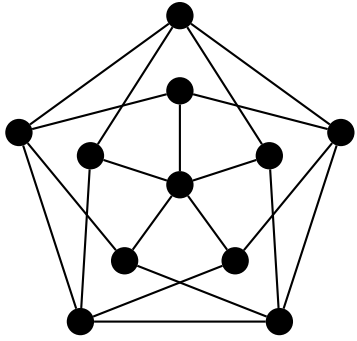
\begin{figure}
\begin{tikzpicture}
\foreach \i in {0,1,2,3,4}{
\node[blackvertexv2] at (90+\i*72:1.25cm) (u\i) {};
\node[blackvertexv2] at (90+\i*72:2.25cm) (v\i) {};
}
\node[blackvertexv2] at (0,0) (center) {};

\draw[thick,black] (v0)--(v1)--(v2)--(v3)--(v4)--(v0);
\draw[thick,black] (center)--(u0);
\draw[thick,black] (center)--(u1);
\draw[thick,black] (center)--(u2);
\draw[thick,black] (center)--(u3);
\draw[thick,black] (center)--(u4);
\draw[thick,black] (v0)--(u1);
\draw[thick,black] (v0)--(u4);
\draw[thick,black] (v1)--(u2);
\draw[thick,black] (v1)--(u0);
\draw[thick,black] (v2)--(u1);
\draw[thick,black] (v2)--(u3);
\draw[thick,black] (v3)--(u2);
\draw[thick,black] (v3)--(u4);
\draw[thick,black] (v4)--(u3);
\draw[thick,black] (v4)--(u0);
\end{tikzpicture}
\caption{The Gr\"{o}tzsch graph. It has chromatic number four, is triangle-free, and embeds in the torus.}
\label{Grotzschgraph}
\end{figure}

\begin{thm}[Thomassen, \cite{Thomassen}]
\label{Carstengirth5}
Every graph of girth at least five embeddable on the torus or projective plane is $3$-colourable.
\end{thm}

The non-orientable part of the above theorem can be strengthened:

\begin{thm}[Thomas \& Walls, \cite{thomaswalls}]
\label{ThomasWallsgirth5}
Every graph of girth at least five embeddable in the Klein Bottle is $3$-colourable.
\end{thm}

These theorems show that every graph with girth at least five embeddable in a surface of Euler genus zero is $3$-colourable. In addition, in the case of the plane, \emph{girth at least five} can be strengthened to \emph{triangle-free} (see Theorem \ref{grotzsch}). It seems reasonable to ask for a unified proof of these results. One standard way to generalize and unify results on surfaces is to observe that graphs embedded in surfaces have relatively few edges: in particular, they have bounded \textit{maximum average degree}. Recall that the maximum average degree of a graph is the maximum of the average degrees over all subgraphs. Before proceeding, we set the following convenient notation: we use $e(G)$ to denote $|E(G)|$, and $v(G)$ to denote $|V(G)|$.  From Euler's formula we get that every graph embeddable on the torus or Klein Bottle with no face of degree at most four has maximum average degree at most $\frac{10}{3}$: to see this, note that if such an embedded graph $G$ has no face of degree at most four, then $2e(G) \geq 5f(G)$, where $f(G)$ is the number of faces. Substituting this into Euler's formula and using the fact that the Euler characteristic of such surfaces is at most $0$, we get that $e(G) \leq \frac{5}{3}v(G)$. 

This suggests the following question: does every graph with maximum average degree at most $\frac{10}{3}$ admit a $3$-colouring? Unfortunately here the answer is no: for instance, $K_{4}$ has average degree $3$, but chromatic number $4$. Nevertheless, this type of question leads to a very important theorem, due to Kostochka and Yancey. To state their result, we need a definition: a graph $G$ is \textit{$k$-critical} if $G$ has chromatic number $k$, but all proper subgraphs of $G$ are $(k-1)$-colourable. 

\begin{thm}[\cite{oresconjecture},\cite{Shortproof}]
\label{KostochkaYancey4}
Every $4$-critical graph $G$ satisfies
\[e(G) \geq \frac{5v(G) -2}{3}.\]
\end{thm}

Thus $4$-critical graphs either have average degree at least $\frac{10}{3}$, or fall \emph{just} short of this bound. This theorem, combined with an easy reduction, was used to give an exceptionally short proof of Gr\"{o}tzsch's Theorem \cite{Shortproof}. Thus one may hope that it could be used to prove Theorems \ref{Carstengirth5} and \ref{ThomasWallsgirth5}. However without some additional work, the bound is not quite strong enough. Naturally one might try to improve the bound in Theorem \ref{KostochkaYancey4}, but unfortunately the bound is known to be tight infinitely often \cite{tightnessore}, and hence no general improvement can be made (See Figure \ref{tightexamples} for two graphs for which the bound is tight).

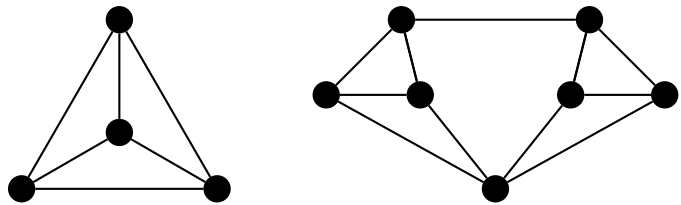
\begin{figure}
    \centering
\begin{tikzpicture}
\foreach \i in {0,1,2}{
\node[blackvertexv2] at (90+\i*120:1.5cm) (u\i) {};
}
\node[blackvertexv2] at (0,0) (u3) {};
\draw[thick,black] (u0)--(u1)--(u2)--(u3)--(u0);
\draw[thick,black] (u0)--(u2);
\draw[thick,black] (u1)--(u3);

\begin{scope}[xshift =5cm]
\node[blackvertexv2] at (0,-.75) (v1) {};
\node[blackvertexv2] at (-2.25,.5) (v2) {};
\node[blackvertexv2] at (-1,.5) (v3) {};
\node[blackvertexv2] at (-1.25,1.5) (v4) {};
\draw[thick,black] (v1)--(v2)--(v3)--(v4)--(v2);
\draw[thick,black] (v4)--(v3);
\draw[thick,black] (v1)--(v3);

\node[blackvertexv2] at (2.25,.5) (v5) {};
\node[blackvertexv2] at (1,.5) (v6) {};
\node[blackvertexv2] at (1.25,1.5) (v7) {};
\draw[thick,black] (v1)--(v5)--(v6)--(v7)--(v5);
\draw[thick,black] (v1)--(v6);
\draw[thick,black] (v6)--(v7);
\draw[thick,black] (v7)--(v4);
\end{scope}
\end{tikzpicture}
    \caption{Two tight examples for Theorem \ref{KostochkaYancey4}. The left graph is $K_{4}$, and the right graph is the Moser Spindle.}
    \label{tightexamples}
\end{figure}

Remarkably, Kostochka and Yancey characterized precisely the family of graphs for which Theorem \ref{KostochkaYancey4} is tight. We defer an explicit definition of this family until later; for now, all that is important is that all tight examples for Theorem \ref{KostochkaYancey4} contain triangles. As Theorems \ref{Carstengirth5} and \ref{ThomasWallsgirth5} concern graphs of girth at least $5$, one might wonder if the bound in Theorem \ref{KostochkaYancey4} could be strengthened under the additional assumption that the $4$-critical graphs have girth at least five; one could then deduce Theorems \ref{Carstengirth5} and \ref{ThomasWallsgirth5}. Liu and Postle showed that this is indeed the case.

\begin{thm}[\cite{4criticalgirth5}]
Every $4$-critical graph $G$ with girth at least five satisfies
\[e(G) \geq \frac{5v(G) + 2}{3}.\]
\end{thm}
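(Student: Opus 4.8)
The plan is to adapt the \emph{potential method} of Kostochka and Yancey, which underlies Theorem~\ref{KY}, and to extract the extra $\tfrac43$ in the numerator from the absence of short cycles (girth at least five means no $C_3$ and no $C_4$, whereas the Kostochka--Yancey bound is tight only for $4$-Ore graphs, which are riddled with triangles). Suppose for contradiction that $G$ is a counterexample with $v(G)$ as small as possible; then $G$ is $4$-critical of girth at least five and $3e(G)\le 5v(G)+1$. Since $G$ is $4$-critical it has minimum degree at least $3$ and is $2$-connected, and an easy argument shows $G$ has no separation of order at most $2$ with a bounded-size side. For $R\subseteq V(G)$ set $\rho(R)=5|R|-3e(G[R])$, so that $\rho(V(G))=5v(G)-3e(G)\ge -1$, while the theorem is exactly the assertion $\rho(V(G))\le -2$.

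The first ingredient is a \emph{subgraph potential lemma}: for every $R\subsetneq V(G)$ with $|R|\ge 1$ one has $\rho(R)\ge c$ for a suitable positive constant (with $\rho(\{v\})=5$, and stronger lower bounds when $|R|$ is small), so that no proper induced subgraph of $G$ is too dense. To prove it, fix a proper subset $R$, take a $3$-colouring $\phi$ of $G-R$ (which exists because $G$ is $4$-critical and $R\ne\emptyset$), and note that $\phi$ cannot extend to $R$. Record the restriction of $\phi$ to the boundary as a list assignment $L$ on $R$ with $|L(w)|\ge 3-(\deg_G(w)-\deg_{G[R]}(w))$; then $G[R]$ has an $L$-critical subgraph, and feeding this into the Kostochka--Yancey density bound for list-critical graphs (in a strengthened triangle-free form) gives the desired lower bound on $\rho(R)$.

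Next I would assemble a list of reducible configurations. Using the potential lemma together with minimality, one rules out, for example, a vertex of degree $3$ whose three (pairwise non-adjacent) neighbours all have degree $3$, and more generally short paths and cycles of degree-$3$ vertices, and degree-$3$ vertices sitting inside ``tight'' small separations. Each configuration is eliminated by deleting or identifying a bounded-size piece $H$ to form a graph $G'$ with $v(G')<v(G)$; a proper subgraph obtained from $G'$ after the modification is $3$-colourable since $G$ is $4$-critical, and this colouring can be pushed back across $H$ to $3$-colour $G$, a contradiction. The girth hypothesis is the engine here: because any two vertices have at most one common neighbour and every neighbourhood is independent, the colours forbidden at the re-inserted vertices are few and only weakly correlated, so the extensions (sometimes via short Kempe chains) go through.

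The proof is then completed by discharging. Assign each $v$ the charge $\deg_G(v)-\tfrac{10}{3}$; the total is $2e(G)-\tfrac{10}{3}v(G)\le\tfrac23$, whereas the theorem is equivalent to this total being at least $\tfrac43$. Degree-$3$ vertices carry $-\tfrac13$ and vertices of degree at least $4$ carry at least $\tfrac23$; with rules that ship charge from high-degree vertices to the degree-$3$ vertices near them and use the reducibility results to cap how many degree-$3$ vertices can cluster in any region, every vertex ends with sufficiently large charge and the total is forced up to at least $\tfrac43$, contradicting the counterexample. I expect the reducibility step to be the crux: one must pin down a family of configurations that is at once forbidden in a minimum counterexample --- which demands careful colour-extension arguments specifically leveraging girth at least five --- and plentiful enough to make the discharging close the $\tfrac43$ gap.
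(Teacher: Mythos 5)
There is a genuine gap, and it sits exactly at your ``first ingredient.'' Your subgraph potential lemma is supposed to follow by restricting a $3$-colouring of $G-R$ to a list assignment on $R$ and then ``feeding this into the Kostochka--Yancey density bound for list-critical graphs (in a strengthened triangle-free form).'' No such strengthened triangle-free bound is available: a triangle-free (let alone list-critical) analogue of Theorem \ref{KY} with the improved constant is essentially the statement you are trying to prove, so the argument is circular at its load-bearing step. The same obstruction resurfaces if you instead run the standard potential machinery with the plain potential $\rho(R)=5|R|-3e(G[R])$: the quotient/identification operations used both in the potential-extension step and in your reducibility arguments (identifying two neighbours of a degree-$3$ vertex, contracting colour classes) create triangles and $4$-cycles, so the $4$-critical graph $W$ that appears after the modification need not have girth five, minimality in the girth-five class cannot be invoked on it, and the unrestricted Kostochka--Yancey bound applied to $W$ is too weak to close the $\tfrac{4}{3}$ gap. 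This is precisely why Liu and Postle (and this paper, in the more general triangle-free setting) do not prove the girth-five statement directly: they prove a stronger theorem about \emph{all} $4$-critical graphs using the $(5,3,1)$-potential $p(G)=5v(G)-3e(G)-T^{3}(G)$, which charges for vertex-disjoint triangles, together with a complete characterization of the $4$-critical graphs of potential at least $-1$ ($K_4$, $4$-Ore graphs with small triangle packings, $W_5$, and the family $\mathcal{B}$); see Theorem \ref{maintheorem} and Lemma \ref{potentialextensionlemma}. Since every exceptional graph contains a triangle, the girth-five (indeed triangle-free) bound falls out as a corollary, and the triangle term is exactly what absorbs the triangles created by quotients so the induction closes.

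Concretely, to repair your plan you would need either (a) a proof of the triangle-free/list-critical density bound you invoke, which you do not supply and which is not in the literature, or (b) the strengthened-potential route: build the structural theory of $3$-cliques in $4$-Ore graphs and in $\mathcal{B}$ (as in Lemmas \ref{K4e}--\ref{4Oresplit3triangle} and Section \ref{T8structure}), prove the potential-extension inequality for $p$, and only then carry out reducibility and discharging. Your proposed reducible configurations are also stated too optimistically (for instance, a degree-$3$ vertex with all neighbours of degree $3$ is not reducible by a short colour-extension argument; what the paper actually proves is that components of $D_{3}(G)$ are acyclic with at most six vertices, via $M$-gadgets and identifiable pairs), and the discharging then has to be calibrated to those weaker structural facts. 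As written, the proposal reproduces the outer skeleton (minimal counterexample, potential, reducibility, discharging) but omits the one idea that makes the skeleton work.
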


This theorem implies Theorems \ref{Carstengirth5} and \ref{ThomasWallsgirth5} simultaneously, giving a unified proof of both theorems. Note however that this theorem does not imply Gr\"{o}tzsch's Theorem naively (as it says nothing about graphs with $4$-cycles), unlike Theorem \ref{KostochkaYancey4}. This begs a natural question: can we replace the girth at least five condition in the theorem with the weaker condition of being triangle-free to have a unified proof of all three surface results using Euler's formula? We answer this in the affirmative.

\begin{thm}
\label{maintheoremintro}
Every triangle-free $4$-critical graph $G$ satisfies
\[e(G) \geq \frac{5v(G)+2}{3}.\]
\end{thm}

Our theorem uses Theorem \ref{KostochkaYancey4}, and hence does not give a new proof of Gr\"{o}tszch's Theorem. It is also significantly longer. Note this theorem generalizes both Theorem \ref{Carstengirth5} and Theorem \ref{ThomasWallsgirth5}, in the sense that a graph which is triangle-free and embeds in the torus or Klein Bottle is $3$-colourable, so long as the edge density is not too large. However, one should note that both Theorem \ref{Carstengirth5} and Theorem \ref{ThomasWallsgirth5} can be strengthened to include $4$-cycles which are contractible, but our theorem generalizes even these stronger statements.

With this, we pause briefly to discuss how tight our result is. The Thomas-Walls construction (\cite{thomaswalls}) shows that the leading term $\frac{5}{3}$ cannot be improved for triangle-free $4$-critical graphs. The idea behind the construction is as follows. Begin by constructing an infinite family of $4$-critical graphs with two key properties: the first being that the graphs satisfy $e(G) = \frac{5v(G)-2}{3}$; the second, that there are two edges $e_1, e_2$ with distinct endpoints such that the deletion of these edges results in a triangle-free graph. The smallest three such graphs are shown in Figure \ref{thomaswallsdrawings}. To finish, for $i \in \{1,2\}$, perform an Ore composition (defined later in the introduction) with $e_i$ and suitable triangle-free $4$-critical graphs with few vertices (for instance, either of the graphs in Figures \ref{Grotzschgraph} and \ref{jamesgraph}). The resulting graph is $4$-critical, since Ore compositions preserve $4$-criticality under certain situations; it is triangle-free, since $e_1$ and $e_2$ were deleted; and finally its edge density is roughly $\frac{5}{3}$, since for a large Thomas-Walls graph, adding in two small graphs such as the ones in Figure \ref{Grotzschgraph} and Figure \ref{jamesgraph} adds relatively few edges.  
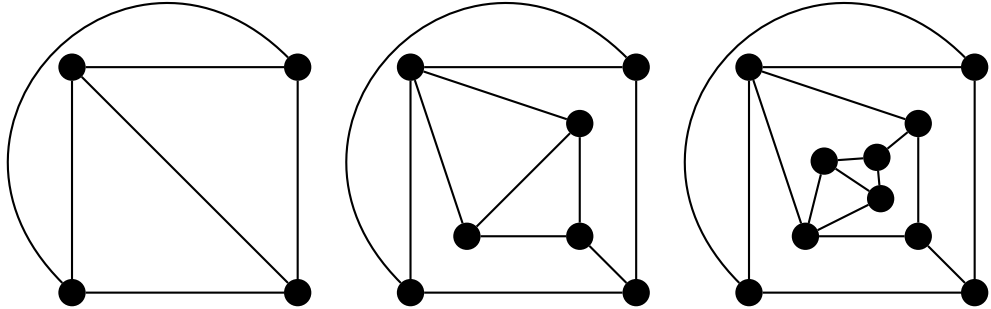
\begin{figure}
    \centering
    \begin{tikzpicture}
    \node[blackvertexv2] at (0,0) (v1) {};
    \node[blackvertexv2] at (0,3) (v2) {};
    \node[blackvertexv2] at (3,3) (v3) {};
    \node[blackvertexv2] at (3,0) (v4) {};
    \draw[thick,black] (v1)--(v2)--(v3)--(v4)--(v1);
    \draw[thick,black] (v2)--(v4);
    \draw[thick,black, bend left = 90, min distance =3cm] (v1) to (v3);
    \begin{scope}[xshift = 4.5cm]
    \node[blackvertexv2] at (0,0) (v1) {};
    \node[blackvertexv2] at (0,3) (v2) {};
    \node[blackvertexv2] at (3,3) (v3) {};
    \node[blackvertexv2] at (3,0) (v4) {};
    \node[blackvertexv2] at (.75,.75) (v6) {};
    \node[blackvertexv2] at (2.25,.75) (v7) {};
    \node[blackvertexv2] at (2.25,2.25) (v8) {};
    \draw[thick,black] (v1)--(v2)--(v3)--(v4)--(v1);
    \draw[thick,black, bend left = 90, min distance =3cm] (v1) to (v3);
    \draw[thick,black] (v2)--(v6)--(v7)--(v8)--(v2);
    \draw[thick,black] (v6)--(v8);
    \draw[thick,black] (v7)--(v4);
    \end{scope}
     \begin{scope}[xshift = 9cm]
    \node[blackvertexv2] at (0,0) (v1) {};
    \node[blackvertexv2] at (0,3) (v2) {};
    \node[blackvertexv2] at (3,3) (v3) {};
    \node[blackvertexv2] at (3,0) (v4) {};
    \node[blackvertexv2] at (.75,.75) (v6) {};
    \node[blackvertexv2] at (2.25,.75) (v7) {};
    \node[blackvertexv2] at (2.25,2.25) (v8) {};
    \node[blackvertexv2] at (1.7,1.8) (v9) {};
    \node[blackvertexv2] at (1,1.75) (v10) {};
    \node[blackvertexv2] at (1.75, 1.25) (v11) {};
    \draw[thick,black] (v1)--(v2)--(v3)--(v4)--(v1);
    \draw[thick,black, bend left = 90, min distance =3cm] (v1) to (v3);
    \draw[thick,black] (v2)--(v6)--(v7)--(v8)--(v2);
    \draw[thick,black] (v7)--(v4);
    \draw[thick,black] (v6)--(v10)--(v9)--(v11)--(v6);
    \draw[thick,black] (v10)--(v11);
    \draw[thick,black] (v9)--(v8);
    \end{scope}
    \end{tikzpicture}
    \caption{The first three Thomas-Walls graphs}
    \label{thomaswallsdrawings}
\end{figure}

Hence the only question then is whether or not the additive term in the bound from Theorem \ref{maintheoremintro} can be improved further. Liu and Postle conjectured the following.

\begin{conj}[\cite{4criticalgirth5}]
Every triangle-free $4$-critical graph $G$ satisfies
\[e(G) \geq \frac{5v(G) + 5}{3}.\]
\end{conj}

Unfortunately, this is false. An unpublished result of James Davies shows it is false infinitely often.

\begin{thm}
\label{jamestheorem}
There exists infinitely many $4$-critical triangle-free graphs $G$ such that
\[e(G) = \frac{5v(G) +4}{3}.\]
\end{thm}

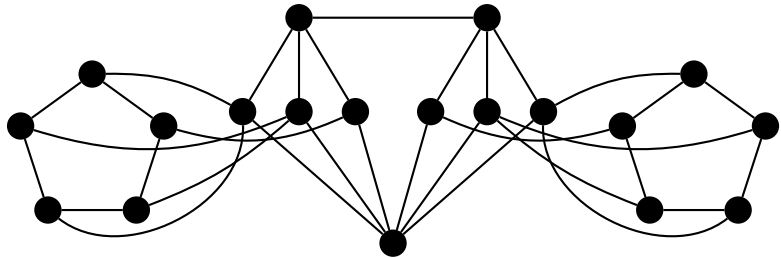
\begin{figure}
    \centering
    \begin{tikzpicture}
\begin{scope}[xshift = -2cm, yshift =-.75cm]
\foreach \i in {0,1,2,3,4}{
\node[blackvertexv2] at (90+\i*72:1cm) (u\i) {};
}
\draw[thick,black] (u0)--(u1)--(u2)--(u3)--(u4)--(u0);
\end{scope}
\node[blackvertexv2] at (2,-2) (v1) {};
\node[blackvertexv2] at (0,-.25) (v2) {};
\node[blackvertexv2] at (.75,-.25) (v3) {};
\node[blackvertexv2] at (1.5,-.25) (v4) {};
\node[blackvertexv2] at (.75, 1) (v5) {};
\draw[thick,black] (v1)--(v2);
\draw[thick,black] (v1)--(v3);
\draw[thick,black] (v1)--(v4);
\draw[thick,black] (v5)--(v2);
\draw[thick,black] (v5)--(v3);
\draw[thick,black] (v5)--(v4);
\draw[thick,black,bend left = 15] (u0) to (v2);
\draw[thick,black,bend right = 20] (u1) to (v3);
\draw[thick,black,bend right = 65] (u2) to (v2);
\draw[thick,black,bend right =10] (u3) to (v3);
\draw[thick,black,bend right = 20] (u4) to (v4);

\node[blackvertexv2] at (4,-.25) (x2) {};
\node[blackvertexv2] at (3.25,-.25) (x3) {};
\node[blackvertexv2] at (2.5,-.25) (x4) {};
\node[blackvertexv2] at (3.25, 1) (x5) {};
\draw[thick,black] (v1)--(x2);
\draw[thick,black] (v1)--(x3);
\draw[thick,black] (v1)--(x4);
\draw[thick,black] (x5)--(x2);
\draw[thick,black] (x5)--(x3);
\draw[thick,black] (x5)--(x4);
\draw[thick,black] (x5)--(v5);

\begin{scope}[xshift = 6cm, yshift =-.75cm]
\foreach \i in {0,1,2,3,4}{
\node[blackvertexv2] at (90+\i*72:1cm) (w\i) {};
}
\draw[thick,black] (w0)--(w1)--(w2)--(w3)--(w4)--(w0);
\end{scope}

\draw[thick,black,bend right = 15] (w0) to (x2);
\draw[thick,black,bend left = 20] (w1) to (x4);
\draw[thick,black,bend left = 10] (w2) to (x3);
\draw[thick,black,bend left =65] (w3) to (x2);
\draw[thick,black,bend left = 20] (w4) to (x3);
\end{tikzpicture}
    \caption{The smallest triangle-free $4$-critical graph from the construction in Theorem \ref{jamestheorem}.}
    \label{jamesgraph}
\end{figure}

To the best of the authors' knowledge, Davies' construction gives rise to the sparsest-known triangle-free $4$-critical graphs. 

This leaves open the following question.

\begin{question}\label{whatisc}
What is the largest $c \in \{2,3,4\}$ such that all $4$-critical triangle-free graphs satisfy 
\[e(G) \geq \frac{5v(G) + c}{3}?\]
\end{question}

 We now describe how we prove Theorem \ref{maintheoremintro}, from which the lower-bound $c \geq 2$ in Question \ref{whatisc} is obtained. We use the potential method, developed by Kostochka and Yancey and used in many recent papers: for example, \cite{nearbipartite, postletrianglefree5crit, tightnessore, Shortproof}. A key component of the potential method is a \emph{potential function}, which is a function of the number of vertices and edges in a graph. The method also involves a certain quotient operation which might create triangles. To deal with this, we incorporate (vertex-disjoint) triangles into our potential function: given a graph $G$, we let $T^{3}(G)$ denote the maximum number of vertex-disjoint triangles in $G$.  Our potential function is defined below.
 
 \begin{definition}
 Given a graph $G$, the \textit{potential} of $G$ is defined as $p(G) := 5v(G) -3e(G) - T^{3}(G)$.
 \end{definition} 
 
 For intuition about this function, consider the case where $G$ is triangle-free. In this case, $T^{3}(G) =0$. Now if $p(G) <0$, then $G$ has average degree smaller than $\frac{10}{3}$, and if $p(G) \geq 0$ then $G$ has average degree at most $\frac{10}{3}$. Thus the potential of $G$ is a measure of how close the average degree of $G$ is to $\frac{10}{3}$, offset by the number of vertex-disjoint triangles. 

Our theorem characterizes the $4$-critical graphs whose potential is at most $-1$. To do this, we need to know the graphs which attain the bound in Theorem \ref{KostochkaYancey4}. This requires a definition.

\begin{definition}
An \textit{Ore Composition} of two graphs $H_{1}$ and $H_{2}$ is the graph $H$ obtained by deleting an edge $xy \in E(H_{1})$, splitting a vertex $z \in V(H_2)$ into two vertices $z_1$ and $z_2$ of positive degree such that $N(z) = N(z_{1}) \cup N(z_{2})$ and $N(z_{1}) \cap N(z_{2}) = \emptyset$, and then identifying $x$ with $z_{1}$ and $y$ with $z_{2}$. Here $N(v)$ refers to the \emph{neighbourhood} of $v$: the set of vertices adjacent to $v$.  We say that $H_{1}$ is the \textit{edge side of the composition} and $H_{2}$ is the \textit{split side of the composition}, and we denote the graph obtained from $H_2$ by splitting $z$ as $H_{2}^{z}$. A graph $G$ is \textit{$4$-Ore} if $G$ is obtained from Ore compositions of $K_{4}$.
\end{definition}

The following wonderful theorem says that every tight example to Theorem \ref{KostochkaYancey4} is in fact $4$-Ore. 

\begin{thm}[\cite{tightnessore}]
\label{KostochkaYanceytight}
A $4$-critical graph $G$ has 
\[e(G) = \frac{5v(G)-2}{3}\] 
if and only if $G$ is $4$-Ore.
\end{thm}

We will also need some more graph classes. We denote by $W_{n}$ the \textit{wheel on $n+1$ vertices}, which is the graph obtained from a cycle on $n$ vertices by adding a vertex adjacent to all vertices in the cycle. A wheel is \textit{odd} if $n$ is odd. Let $T_{8}$ be the graph with vertex set $V(T_{8}) = \{u_{1},u_{2},u_{3},u_{4},u_{5},u_{6},u_{7},u_{8}\}$ and $E(T_{8}) = \{u_{1}u_{2}, u_{1}u_{3}, u_{1}u_{4}, u_{1}u_{5}, u_{2}u_{3}, u_{2}u_{4}, u_{2}u_{5}, u_{3}u_{8}, u_{4}u_{7}, \\ u_{5}u_{6}, u_{6}u_{7}, u_{6}u_{8}, u_{7}u_{8}\}$. See Figure \ref{T8pic2} for an illustration. Let $\mathcal{B}$ be defined as follows: the graph $T_{8}$ is in  $\mathcal{B}$, and given a graph $G \in \mathcal{B}$ and a $4$-Ore graph $H$, the Ore composition $G'$ of $G$ and $H$ is in $\mathcal{B}$ whenever $T^{3}(G') =2$. 

\begin{figure}
\label{T8pic}
\begin{center}
\begin{tikzpicture}
\node[blackvertexv2] at (.5,3) (u1) {};
\node[smallwhite] at (0,3) (dummy1) {$u_{1}$};
\node[blackvertexv2] at (1.5,3) (u2) {};
\node[smallwhite] at (1.95,3) (dummy2) {$u_{2}$};
\node[blackvertexv2] at (0,2) (u3) {};
\node[smallwhite] at (-.5,2) (dummy3) {$u_{3}$};
\node[blackvertexv2] at (1,2) (u4) {};
\node[smallwhite] at (.55,2) (dummy4) {$u_{4}$};
\node[blackvertexv2] at (2,2) (u5) {};
\node[smallwhite] at (1.55,2) (dummy5) {$u_{5}$};
\node[blackvertexv2] at (0,0) (u6) {};
\node[smallwhite] at (-.45,0) (dummy6) {$u_{6}$};
\node[blackvertexv2] at (1,1) (u7) {};
\node[smallwhite] at (.55,1) (dummy7) {$u_{7}$};
\node[blackvertexv2] at (2,0) (u8) {};
\node[smallwhite] at (2.55,0) (dummy8) {$u_{8}$};

\draw[thick,black] (u1)--(u2);
\draw[thick,black] (u1)--(u3);
\draw[thick,black] (u1)--(u4);
\draw[thick,black] (u1)--(u5);
\draw[thick,black] (u2)--(u3);
\draw[thick,black] (u2)--(u4);
\draw[thick,black] (u2)--(u5);
\draw[thick,black] (u3)--(u6);
\draw[thick,black] (u4)--(u7);
\draw[thick,black] (u5)--(u8);
\draw[thick,black] (u6)--(u7);
\draw[thick,black] (u6)--(u8);
\draw[thick,black] (u7)--(u8);

\begin{scope}[xshift =5cm]
\node[blackvertexv2] at (.5,3) (u1) {};
\node[blackvertexv2] at (1.5,3) (u2) {};
\node[blackvertexv2] at (0,2) (u3) {};
\node[blackvertexv2] at (1,2) (u4) {};
\node[blackvertexv2] at (2,2) (u5) {};
\node[blackvertexv2] at (0,0) (u6) {};
\node[blackvertexv2] at (1,1) (u7) {};
\node[blackvertexv2] at (2,0) (u8) {};
\node[blackvertexv2] at (1.25,.3) (u9){};
\node[blackvertexv2] at (1.25,-.3) (u10) {};
\node[blackvertexv2] at (.75,0) (u11) {};

\draw[thick,black] (u1)--(u2);
\draw[thick,black] (u1)--(u3);
\draw[thick,black] (u1)--(u4);
\draw[thick,black] (u1)--(u5);
\draw[thick,black] (u2)--(u3);
\draw[thick,black] (u2)--(u4);
\draw[thick,black] (u2)--(u5);
\draw[thick,black] (u3)--(u6);
\draw[thick,black] (u4)--(u7);
\draw[thick,black] (u5)--(u8);
\draw[thick,black] (u6)--(u7);
\draw[thick,black] (u6)--(u11);
\draw[thick,black] (u7)--(u8);
\draw[thick,black] (u8)--(u9);
\draw[thick,black] (u8)--(u10);
\draw[thick,black] (u9)--(u10);
\draw[thick,black] (u9)--(u11);
\draw[thick,black] (u10)--(u11);

\end{scope}
\end{tikzpicture}
\end{center}
\caption{
The graph on the left is $T_{8}$, and the graph on the right is an example of a graph in $\mathcal{B}$.
}
\label{T8pic2}
\end{figure}
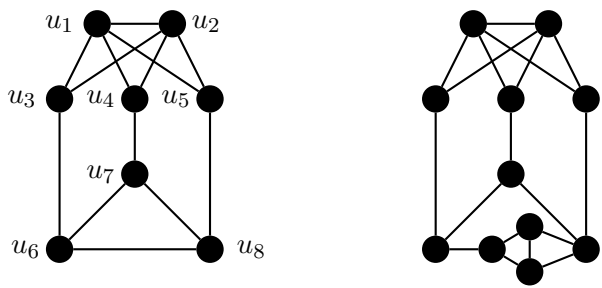

We can now state our main theorem.

\begin{thm}
\label{maintheorem}
If $G$ is a $4$-critical graph, then
\begin{itemize}
\item{$p(G) = 1$ if $G = K_4$, }
\item{$p(G) = 0$ if $T^{3}(G) = 2$ and $G$ is $4$-Ore,}
\item{$p(G) = -1$ if $G = W_{5}$, or $G \in \mathcal{B}$, or $G$ is $4$-Ore with $T^{3}(G) =3$, and}
\item{$p(G) \leq -2$ otherwise.}
\end{itemize}
\end{thm}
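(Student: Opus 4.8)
The plan is to argue by contradiction: assume $G$ is a $4$-critical triangle-free (or more generally, not-too-dense) graph that is a minimum counterexample to Theorem~\ref{maintheorem}, meaning $p(G) \geq -1$ but $G$ is none of $K_4$, $W_5$, a graph in $\mathcal{B}$, or a $4$-Ore graph with $T^3(G) \in \{2,3\}$; or $p(G)\geq 0$ and $G$ is not $K_4$ or a relevant $4$-Ore graph; and so on. We choose $G$ to minimize $v(G)$, and subject to that, to maximize $e(G)$. The high-level strategy, following Kostochka--Yancey and Liu--Postle, is to show that such a $G$ has no small \emph{reducible configurations}, then use a discharging or direct counting argument to contradict the potential bound. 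The key structural notion is that for a set $R \subseteq V(G)$, one forms the quotient $G/R$ (identifying $R$ to a single vertex, or more carefully contracting and cleaning up), $3$-colours it by minimality, and derives that $R$ must have large potential inside $G$; the precise inequality is a ``potential of a critical subgraph'' lemma of the form $p_G(R) \geq p(G) + (\text{something})$ unless $R$ induces a $K_4$ or an Ore-type graph.

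First I would establish the basic potential arithmetic: compute $p(K_4) = 5\cdot 4 - 3 \cdot 6 - 1 \cdot 1 = 1$, and verify that an Ore composition $G'$ of $G_1$ and $G_2$ satisfies $v(G') = v(G_1)+v(G_2)$, $e(G') = e(G_1)+e(G_2)-1$, and track how $T^3$ behaves (this is where the hypotheses $T^3(G') = 2$ or $3$ enter). From this, $4$-Ore graphs with a prescribed number of disjoint triangles have exactly the claimed potential, by induction on the number of Ore compositions, giving the $p(G) = 0$ and part of the $p(G) = -1$ cases directly. Then I would compute $p(W_5) = 5 \cdot 6 - 3 \cdot 10 - 1 \cdot 1 = -1$ and $p(T_8) = 5 \cdot 8 - 3 \cdot 13 - 1 \cdot T^3(T_8)$; since $T_8$ has exactly one triangle... wait, one must check: $T_8$ has edges forming triangles $u_1u_2u_3$, $u_1u_2u_4$, $u_1u_2u_5$, $u_6u_7u_8$, so $T^3(T_8) = 2$ and $p(T_8) = 40 - 39 - 2 = -1$, consistent with membership in the $\mathcal{B}$ case; and inductively every $G \in \mathcal{B}$ has $p(G) = -1$ using $T^3(G') = 2$ in the definition.

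The substantive work is the ``otherwise'' case: showing every $4$-critical $G$ not on the exceptional list has $p(G) \leq -2$. Here I would set up the minimum counterexample and prove a sequence of reducibility lemmas. The workhorse is: if $R \subsetneq V(G)$ with $3 \leq |R|$ and $R \neq V(G)$, let $H$ be obtained from $G$ by identifying the vertices outside (or inside) $R$ appropriately and taking a critical subgraph; by minimality of $v(G)$ the potential bound holds for $H$, and unwinding gives that $G[R]$ together with the ``rest'' interacts so that either $G[R]$ is $K_4$/$4$-Ore-like or we can $3$-colour $G$, a contradiction. Standard consequences: $G$ has no vertex of degree $\leq 2$ (critical graphs have min degree $\geq 3$ already), no separating set of size $\leq 2$, every edge lies in few triangles (triangle-freeness in the original application kills these entirely, but in the general statement we must handle the triangle-rich Ore pieces, which is exactly why $W_5$, $\mathcal{B}$, and $4$-Ore appear as exceptions), and no small set $R$ with $p_G(R)$ too small relative to $|R|$. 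I expect the main obstacle to be the careful bookkeeping of $T^3$ under the quotient operation --- contracting an independent set or a small subgraph can create or destroy disjoint triangles in ways that must be tracked tightly enough to preserve the $-2$ versus $-1$ distinction --- together with pinning down exactly the configurations (the ``$T_8$-like'' blocks and their Ore compositions) that achieve $p(G) = -1$, i.e., proving that the list $\{W_5\} \cup \mathcal{B} \cup \{4\text{-Ore with } T^3 = 3\}$ is \emph{complete}. Once no reducible configuration survives, a global discharging argument over vertices/edges/triangles (assigning charge $5$ to vertices, $-3$ per edge, $-1$ per triangle in a maximum packing, then redistributing) should force $p(G) \leq -2$, the final contradiction.
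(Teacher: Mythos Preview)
Your high-level framework matches the paper's: minimum counterexample, potential-extension lemma controlling subgraph potentials, then discharging. The base-case computations are fine (though note $v(G') = v(G_1) + v(G_2) - 1$ for an Ore composition, not $v(G_1)+v(G_2)$). But the proposal stays at the level of ``standard Kostochka--Yancey machinery plus discharging'' and misses the specific structural work that makes the argument go through, so as written it has real gaps.

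The paper's proof does not proceed via a generic ``no small $R$ with small potential'' reduction followed by discharging on vertices/edges/triangles. Instead, after the potential-extension lemma it focuses entirely on the subgraph $D_3(G)$ induced by degree-three vertices: it proves $G$ has no $K_4 - e$ subgraph, that $D_3(G)$ is acyclic, and that every component of $D_3(G)$ has at most six vertices with a short list of possible shapes. The key device is an identification lemma (if $x$ has degree three and sits on a path $xyz$ of degree-three vertices not in a degree-three triangle, then either the two non-$y$ neighbours of $x$ are adjacent or $x$ is the end of an ``$M$-gadget'', a split Moser spindle), and this in turn relies on a battery of structural lemmas about triangles in $4$-Ore graphs and in $\mathcal{B}$ --- kites, spars, foundational edges, how $T^3$ behaves under splitting a vertex or deleting a triangle (Lemmas \ref{K4e}, \ref{splittinglemma}, \ref{deletingatriangle}, \ref{foundationaledgesin4Ore}, \ref{4Oresplit3triangle}, \ref{foundationaledgesinB}, \ref{T8splits}). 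Your proposal alludes to ``careful bookkeeping of $T^3$ under the quotient operation'' as the main obstacle, and that is correct, but this bookkeeping is not a routine matter: it is the content of Sections~\ref{cliquesection}--\ref{T8structure} and occupies the bulk of the paper. Without those lemmas you cannot rule out that the extender $W$ is $4$-Ore with $T^3(W)=2$ or $3$, or lies in $\mathcal{B}$, in the critical cases where the generic potential inequality is off by one. Finally, the discharging is not over vertices/edges/triangles with charges $5,-3,-1$; it assigns $\deg(v)$ to each vertex and moves charge from high-degree vertices to degree-three neighbours in three carefully tuned stages, exploiting the $M$-gadget structure at the leaves of components of $D_3(G)$. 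None of this is visible in your sketch, and it is where the $-1$ versus $-2$ distinction is actually won.
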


We note that Theorem \ref{maintheorem} implies Theorem \ref{maintheoremintro}. To see this, observe that the graphs contained in the first three bullet points all contain triangles. Hence if $G$ is triangle-free and $4$-critical, then its potential is at most $-2$. Thus we have $5v(G) - 3e(G) \leq -2$, and now simply solving for $e(G)$ gives the desired result. 

We now give a brief outline of the proof of Theorem \ref{maintheorem}. We follow the potential method as outlined by Kostochka and Yancey. This technique starts with a vertex-minimum counterexample, and then applies a ``quotient" which takes the counterexample to a smaller $4$-critical graph. Here we pause to point out an important detail about why our main theorem involves vertex-disjoint triangles rather than just triangles: though the quotient operation may create many triangles, it turns out it can only create at most three additional vertex-disjoint triangles. Our potential function allows us to use the structure of the smaller graph effectively. There are multiple situations which can occur when passing to the smaller graph. One outcome is that the potential of the smaller $4$-critical graph is at most $-2$; this case should be considered the ``easy" case. In this case, we follow a similar approach to Theorem \ref{KostochkaYancey4} and apply a counting lemma (the Potential-Extension Lemma, Lemma \ref{potentialextensionlemma}) to show our minimum counterexample has a specific structure. In particular, we  show that our minimum counterexample does not contain cycles where all vertices in the cycle have degree $3$; nor does it contain $K_{4}-e$ subgraphs. 

But this is not all that can occur: when we take the quotient we might end up with a smaller graph which has potential bigger than $-2$. In this case, we know the smaller graph belongs to one of three classes: either it is $4$-Ore, in $\mathcal{B}$, or $W_{5}$. Using the structure of these graph classes, we again will be able to deduce that our minimum counterexample does not contain $K_{4}-e$ as a subgraph, nor cycles where all of the vertices are of degree $3$. Here it is fundamental to know the structure of these graphs to be able to make these assertions. Without the stronger theorem statement characterizing the potential of these graph classes, it seems unlikely to be able to be able to make progress as the more general density bounds for $4$-critical graphs simply are not strong enough. 

Once we have proven some basic structural results, our goal will be to prove that in a minimum counterexample, if we let $X$ be the set of vertices that have degree $3$, then the graph induced by $X$ does not contain components with more than six vertices. In fact, we will ensure that if there is a component containing more than two vertices, then our minimum counterexample contains at least one triangle. With this, we end the paper with a discharging argument that shows that our minimum counterexample cannot exist. 

We note that many of the ideas developed in this paper are not specific to $4$-critical graphs and readily generalize to $k$-critical graphs. In particular, we conjecture the following strengthening of our result to $k$-critical graphs.

\begin{conj}
Every $k$-critical graph with no $K_{k-1}$ subgraph satisfies
\[e(G) \geq \frac{(k+1)(k-2)v(G) +k(k-3)}{2(k-1)}.\]
\end{conj}

Our result says that the $k=4$ case of this theorem is true; an unpublished result of Davies says that this conjecture is best possible infinitely often for each $k \geq 5$.

The paper is organized as follows.  In Section \ref{cliquesection}, we present structural lemmas regarding triangles in $4$-Ore graphs.  In Section \ref{T8structure}, we present results regarding the triangles of graphs in $\mathcal{B}$. In Section \ref{potentialsection}, we give a brief overview of the potential method and results specific to its use for $4$-critical graphs. Section \ref{mincounterexamplesection} begins by supposing the existence of a minimum counterexample $G$ to Theorem \ref{maintheorem}, and then deduces various structural properties of $G$: in particular, that $G$ has no $K_{4}-e$ subgraph, that $G$ has no cycles of degree $3$ vertices, and that each connected component of the graph induced by degree $3$ vertices of $G$ has at most six vertices. Finally, Section \ref{dischargingsection} uses discharging to show that no such minimum counterexample exists.

While essential to the overall proof, Sections \ref{cliquesection} and \ref{T8structure} are quite lengthy and technical. It may be easier for a reader to skip these sections at first, and then return to them as needed.

\section{Triangles in $4$-Ore graphs}
\label{cliquesection}

In this section we prove many structural results about triangles in $4$-Ore graphs that are required for our proof of Theorem \ref{maintheorem}. Recall that $4$-Ore graphs are the graphs for which Theorem \ref{KostochkaYanceytight} is tight; for this reason, we will expend significant effort to understand their structure. The most critical fact that we need is that aside from $K_{4}$ and the unique seven-vertex $4$-Ore graph called the \textit{Moser spindle} (see Figure \ref{tightexamples}), $4$-Ore graphs with few triangles contain two vertex-disjoint $K_{4}-e$ subgraphs with special properties. See Lemma \ref{K4e} for a more precise statement.  This leads to the second-most important fact: if a $4$-Ore graph $G$ has exactly three vertex disjoint triangles, then for any triangle $T$ in $G$, either $G-T$ has two vertex-disjoint triangles, or $G-T$ contains a special $K_{4}-e$ subgraph.  For convenience, we will denote the Moser spindle as $M$. 

The first two observations are easy and are proved in \cite{4criticalgirth5}.

\begin{obs}[Proposition 2.1, \cite{4criticalgirth5}]
\label{deletingavertex}
If $G$ is $4$-Ore and $v$ is any vertex in $V(G)$, then $G-v$ contains a triangle.
\end{obs}

\begin{obs}[Proposition 2.2, \cite{4criticalgirth5}]
\label{deletingaclique}
If $G$ is $4$-Ore and not isomorphic to $K_{4}$, then for any triangle $T$ in $G$, $G-T$ contains a triangle.
\end{obs}

The following definition is helpful to avoid repetition. 

\begin{definition}
Let $G$ be a graph. A \textit{triangle packing of $G$} is a maximum collection of vertex-disjoint triangles in $G$.
\end{definition}

Thus $T^{3}(G)$ is the size of a triangle packing of a graph $G$. It will be useful to bound the size of a triangle packing of an Ore composition, which we do now.

\begin{prop}
\label{cliqueboundinequality}
If $G$ is an Ore composition of $H_{1}$ and $H_2$ where $H_{1}$ is the edge side and uses edge $xy$ and $H_{2}$ is the split side and uses vertex $z$, then 
\[T^{3}(G) \geq T^{3}(H_{1}) + T^{3}(H_{2})-f_1(H_{1})-f_2(H_{2}),\]
where $f_1(H_1)$ and $f_2(H_2)$ are defined as follows: 
\begin{itemize}
    \item $f_1(H_1) = 0$ if there exists a triangle-packing of $H_1$ that avoids the edge $xy$, and $f_1(H_1) = 1$ otherwise; and
    \item $f_2(H_2) = 0$ if there exists a triangle-packing of $H_2$ that avoids $z$, and $f_2(H_2) = 1$ otherwise.  
\end{itemize} 

\end{prop}

\begin{proof}
Let $H_{1}$ be the edge side of the composition with edge $xy$ and $H_{2}$ the split side,where we split $z$ into vertices $z_{1}$ and $z_{2}$. 

For $i \in \{1,2\}$, let $\mathcal{T}_{i}$ be a triangle packing of $H_{i}$, and let $\mathcal{T}_{i}'$ be the triangle packing obtained from $\mathcal{T}_{i}$ by removing a triangle if it uses either $z$ or the edge $xy$. Note that $|\mathcal{T}_{i}'| \geq |\mathcal{T}_{i}| -1$, and equality holds if and only if $\mathcal{T}_{i}$ contains a triangle using either the edge $xy$ or the vertex $z$. Moreover, by construction  $\mathcal{T}_{1}' \cup \mathcal{T}_{2}'$ is a collection of vertex-disjoint triangles. It follows that
\[T^{3}(G) \geq T^{3}(H_{1}) + T^{3}(H_{2}) -f_1(H_1)-f_2(H_2).\]
\aftermath
\end{proof}

\begin{cor}
\label{Kkbound}
If $G$ is an Ore composition of a graph $H$ and $K_{4}$, then $T^{3}(G) \geq T^{3}(H)$.
\end{cor}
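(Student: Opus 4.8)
The statement says: if $G$ is the Ore composition of a graph $H$ and $K_k$, then $T^{k-1}(G) \geq T^{k-1}(H)$. The plan is to apply Proposition \ref{cliqueboundinequality} directly, splitting into the two cases according to which side plays the role of the edge side.

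First I would handle the case where $K_k$ is the split side and $H$ is the edge side. Here we apply Proposition \ref{cliqueboundinequality} with $H_1 = H$ and $H_2 = K_k$. The key observation is that $T^{k-1}(K_k) = 1$ (any $K_{k-1}$ subgraph is a maximum clique packing, since $v(K_k) = k < 2(k-1)$ for $k \geq 3$), and that $K_k$ has a $(k-1)$-clique packing avoiding the split vertex $z$: simply take the $K_{k-1}$ on the vertices $V(K_k) \setminus \{z\}$. Hence $f(H, K_k) \in \{0, -1\}$, with $f(H,K_k) = -1$ only in the subcase where every $(k-1)$-clique packing of $H$ uses the edge $xy$. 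In all subcases Proposition \ref{cliqueboundinequality} gives $T^{k-1}(G) \geq T^{k-1}(H) + 1 - 1 = T^{k-1}(H)$, as desired.

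Next I would handle the case where $K_k$ is the edge side and $H$ is the split side, so we apply Proposition \ref{cliqueboundinequality} with $H_1 = K_k$ (using edge $xy$) and $H_2 = H$ (splitting vertex $z$). Again $T^{k-1}(K_k) = 1$, and $K_k - xy$ contains a $K_{k-1}$ (on $V(K_k) \setminus \{x\}$, say), so $K_k$ has a $(k-1)$-clique packing avoiding the edge $xy$; thus $f(K_k, H) \in \{0, -1\}$, with $-1$ only when every $(k-1)$-clique packing of $H$ uses $z$. In every subcase we again get $T^{k-1}(G) \geq T^{k-1}(H) + 1 - 1 = T^{k-1}(H)$.

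I do not anticipate a real obstacle here — this is essentially a bookkeeping corollary of Proposition \ref{cliqueboundinequality}. The only point requiring a moment's care is verifying that $K_k$ always admits a $(k-1)$-clique packing avoiding any prescribed vertex or any prescribed edge, which is immediate since deleting one vertex from $K_k$ leaves $K_{k-1}$; this kills the "$f = -2$" case and pins $f$ down to $\{0,-1\}$, which is exactly what makes the $+1$ from $T^{k-1}(K_k)$ cancel the worst-case loss.
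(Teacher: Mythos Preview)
Your proposal is correct and follows essentially the same approach as the paper: both apply Proposition~\ref{cliqueboundinequality}, use that $T^{k-1}(K_k)=1$, and observe that $K_k$ admits a $(k-1)$-clique packing avoiding any prescribed vertex (or edge), so the loss term is at most~$1$. Your version is slightly more explicit in separating the two cases (edge side versus split side) and in spelling out the edge case $K_k - xy \supseteq K_{k-1}$, whereas the paper dispatches both cases in a single sentence.
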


\begin{proof}
Note that $T^{3}(K_{4}) =1$, and that for any vertex $v \in V(K_{4})$, there is a triangle in $K_{4}-v$. Hence regardless of whether $K_{4}$ is the split side or edge side of the composition, by Proposition \ref{cliqueboundinequality} we have as desired that
\[T^{3}(G) \geq T^{3}(H) +T^{3}(K_{4}) -1 = T^{3}(H).\]
\aftermath
\end{proof}

\begin{cor}
\label{onecliquecharacterization}
The only $4$-Ore graph $G$ with $T^{3}(G) =1$ is $K_{4}$; every other $4$-Ore graph $H$ has $T^3(H) \geq 2$.
\end{cor}

\begin{proof}
Let $G$ be a vertex-minimum counterexample. Since $G \neq K_{4}$, we have that $G$ is the Ore composition of two graphs $H_{1}$ and $H_{2}$. If neither $H_{1}$ nor $H_{2}$ is $K_{4}$, then by the minimality of $G$ we have that $T^{3}(H_{i}) \geq 2$ for $i \in \{1,2\}$ and hence by Proposition \ref{cliqueboundinequality} we have $T^{3}(G)$ $\geq T^{3}(H_{1}) + T^{3}(H_{2}) -2 \geq 2$. 

Now consider the case where exactly one of $H_1$ and $H_2$ is isomorphic to $K_4$. Let $i \in \{1,2\}$ be such that $H_i = K_4$. Then $T^{3}(G)$ $\geq T^{3}(H_{3-i}) \geq 2$ by Corollary \ref{Kkbound}. Therefore $H_{1} = H_{2} = K_{4}$.
In this case $G$ is the Moser spindle, which has a triangle packing of size $2$ (see Figure \ref{tightexamples}). 
\end{proof}

The observations above all generalize easily to $k$-Ore graphs for $k \geq  5$. However, past this point the lemmas we prove require that $k =4$; and hence our paper is restricted to the study of $4$-critical graphs.

\begin{definition}
A \textit{kite} in $G$ is a $K_{4}-e$ subgraph $K$ such that the vertices of degree $3$ in $K$ have degree $3$ in $G$. The \textit{spar} of a kite $K$ is the unique edge in $E(K)$ contained in both triangles of $K$.
\end{definition}

The following two lemmas partially describe the structure of $4$-Ore graphs with triangle packings of size two.
\begin{lemma}
\label{K4e}
If $G$ is a $4$-Ore graph with $T^{3}(G) =2$, then $G$ contains two edge-disjoint kites that share at most one vertex. Furthermore, if $G$ is not the Moser spindle $M$, then $G$ contains two vertex-disjoint kites.
\end{lemma}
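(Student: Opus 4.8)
The plan is to proceed by induction on $v(G)$, using the recursive structure of $4$-Ore graphs together with Proposition~\ref{cliqueboundinequality}. The base case is $G = K_4$, which has $T^3(K_4) = 1 \neq 2$, so it is vacuously covered; the first genuine case is the Moser spindle $M$, which one checks directly contains two edge-disjoint kites sharing a single vertex (this is the exceptional case in the statement). So assume $G \neq M$ is a $4$-Ore graph with $T^3(G) = 2$, written as an Ore composition of $H_1$ (edge side, deleting $xy$) and $H_2$ (split side, splitting $z$ into $z_1, z_2$).

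First I would use Proposition~\ref{cliqueboundinequality} and Corollary~\ref{onecliquecharacterization} to pin down $T^3(H_1)$ and $T^3(H_2)$. Since $T^3(G) = 2$ and $T^3(H_i) \geq 1$ always, the inequality $T^3(G) \geq T^3(H_1) + T^3(H_2) - f(H_1, H_2)$ with $f \leq 0$ forces $T^3(H_1) = T^3(H_2) = 1$ (hence $H_1 = H_2 = K_4$ by Corollary~\ref{onecliquecharacterization}) whenever $f(H_1,H_2) = 0$; more generally it constrains the pair $(T^3(H_1), T^3(H_2))$ to a short list depending on the value of $f$. The main structural work is then a case analysis on which of $H_1, H_2$ equals $K_4$ and, when one of them is a larger $4$-Ore graph, on where its $3$-clique packings sit relative to the distinguished edge/vertex. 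In each case I would locate two kites: when a copy of $K_4$ is glued in, the $K_4$ minus the deleted edge (respectively, minus the split vertex's "half") naturally yields a $K_4 - e$ whose degree-three vertices retain degree three in $G$ because the composition only adds edges at $x,y$ or at $z_1,z_2$; tracking degrees carefully shows these form kites. One kite will typically come from near the $K_4$ and the other from an inductively-guaranteed kite in the larger side, and I would argue they can be chosen edge-disjoint, sharing at most one vertex — and vertex-disjoint unless we are forced back into the Moser spindle configuration.

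The main obstacle I anticipate is bookkeeping the degree conditions: a vertex that has degree three in $K_4 - e$ might not have degree three in $G$ if it happens to be one of $x, y, z_1, z_2$, so I must verify in each case that the kite can be chosen to avoid the composition interface, or that the interface vertices themselves acquire exactly degree three. The second subtlety is the "at most one shared vertex" and the vertex-disjoint upgrade: the two kites coming from the two sides of the composition could a priori both touch the gluing vertices $\{x,y\} = \{z_1, z_2\}$, and I need to check that this overlap is at most a single vertex in general, and empty unless $G = M$. I expect that the only way to be forced into sharing a vertex (and into the non-vertex-disjoint conclusion) is precisely when both sides are $K_4$ and the resulting graph is the Moser spindle, which is why $M$ is singled out; ruling out all other small configurations that might share a vertex is the delicate endgame of the proof.
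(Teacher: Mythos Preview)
Your proposal is correct and follows essentially the same approach as the paper: induction on $v(G)$, with the Moser spindle as the base/exceptional case, followed by a case split on which of $H_1$, $H_2$ equals $K_4$, locating one kite from the $K_4-e$ produced at the composition interface and the other inductively in the larger side, and then verifying vertex-disjointness except when $G=M$. The paper's proof does not route the argument through the $f(H_1,H_2)$ value of Proposition~\ref{cliqueboundinequality} as you propose; instead it simply observes (via Corollary~\ref{Kkbound} and Corollary~\ref{onecliquecharacterization}) that any $H_i \neq K_4$ must have $T^3(H_i)=2$, so the induction hypothesis applies directly, and then sub-cases on whether the larger side is $M$ or not --- but this is a cosmetic difference, and the degree/interface bookkeeping you flag is exactly the content of the paper's case analysis.
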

\begin{proof}
We proceed by induction on $v(G)$. As $T^{3}(G) =2$, by Corollary \ref{onecliquecharacterization} we have that $G \neq K_{4}$. Hence $G$ is the Ore composition of two graphs $H_{1}$ and $H_{2}$. Up to relabelling, we may assume that $H_{1}$ is the edge side of the composition where we delete the edge $xy$, that $H_{2}$ is the split side where the vertex $z$ is split into two vertices $z_{1}$ and $z_{2}$, and that $x$ is identified with $z_1$ and $y$ with $z_2$ in $G$. We break into cases depending on which (if any) of $H_1$ and $H_2$ is isomorphic to $K_4$. \\
\textbf{Case 1: $H_1 = H_2 = K_4$}. \\ In this case, $G$ is isomorphic to the Moser spindle, which contains two edge-disjoint kites that share exactly one vertex.

\noindent
\textbf{Case 2: $H_{1} = K_{4}$, and $H_2 \neq K_4$.} \\
First suppose that $H_{2} \neq M$. Then by the induction hypothesis, $H_{2}$ contains two vertex-disjoint kites. Note that $z$ belongs to at most one of these kites, and hence $H_{2}^{z}$ contains a kite not containing $z_{1}$ or $z_{2}$. Observe that $H_{1}-xy$ is a kite, and thus in this case $G$ contains two vertex-disjoint kites. Therefore we may assume that $H_{2} = M$. If $z$ is the unique vertex of degree $4$ in $M$, then $T^{3}(G) =3$, a contradiction. But if $z$ is not the unique vertex of degree $4$, then there is a kite in $M^{z}$ containing neither $z_1$ nor $z_2$, and thus this kite and $H_{1}-xy$ give two vertex-disjoint kites.

\noindent
\textbf{Case 3: $H_1 \neq K_4$, and $H_{2} = K_{4}$.} \\
First suppose that $H_{1} \neq M$. Then by the induction hypothesis there are two vertex-disjoint kites in $H_{1}$, say $D_{1}$ and $D_{2}$. Thus either there is a kite in $H_{1}-xy$ that avoids both $x$ and $y$, or up to relabelling $x \in V(D_{1})$ and $y \in V(D_{2})$. In either case, since $H_{2}^{z}$ contains a kite that contains at most one of $z_1$ and $z_2$, it follows that $G$ contains two vertex-disjoint kites.

Therefore $H_{1}=M$. If $T^{3}(H_{1}-xy) =2$, then $T^{3}(G) =3$, a contradiction. Hence in $H_{1}$, both $x$ and $y$ have degree $3$. Further, $x$ and $y$ are not the two vertices that have degree $3$ and are not incident to a spar of a kite. Thus it follows that there is a kite in $H_{1}-xy$ that avoids both $x$ and $y$. Since there is a kite in $H_{2}^{z}$, there are two vertex-disjoint kites in $G$.\\
\textbf{Case 4: $H_{1} \neq K_{4}$, and $H_{2} \neq K_{4}$.}\\
First suppose $H_{2} \neq M$. Then by induction, $H_{2}$ contains two vertex-disjoint kites, and hence $H_{2}^{z}$ contains a kite that avoids both $z_1$ and $z_2$. Similarly, $H_{1}$ contains two edge-disjoint kites by induction. Thus $H_{1}-xy$ contains a kite, and so $G$ contains two vertex-disjoint kites.

Therefore $H_{2} = M$. If $z$ is the unique vertex of degree $4$ in $M$, then $T^{3}(H_{2}^{z}-z_{1}-z_{2}) =2$, and it follows that $T^{3}(G) \geq 3$, a contradiction. Thus $z$ is  not the degree $4$ vertex in $M$, and thus there is a kite in $H_{2}^{z}$ that avoids both $z_1$ and $z_2$. By induction, there are two edge-disjoint kites in $H_{1}$, and hence there is a kite in $H_{1}-xy$. It follows that there are two vertex-disjoint kites in $G$, as desired.
\end{proof}

\begin{cor}
\label{onlytwok4e}
If $G$ is $4$-Ore and $T^{3}(G) = 2$, then $G$ contains exactly two subgraphs isomorphic to a kite.
\end{cor}

\begin{proof}
By Lemma \ref{K4e}, we have that $G$ contains at least two subgraphs isomorphic to a kite. Thus it suffices to show $G$ does not contain at least three such subgraphs. If $G = M$, then one simply checks that there are precisely two subgraphs isomorphic to a kite. Therefore we may assume that $G \neq M$, and so by Lemma \ref{K4e}, we have that $G$ contains two vertex- disjoint kites $K$ and $K'$. Let $K''$ be a third kite distinct from $K$ and $K'$. If $K''$ is disjoint from both $K$ and $K'$, then $T^{3}(G) \geq 3$, a contradiction. Therefore without loss of generality we may assume that $V(K) \cap V(K'') \neq \emptyset$, and $|V(K) \cap V(K'')| \geq |V(K') \cap V(K'')|$. Let $V(K) = \{v_{1},v_{2},v_{3},v_{4}\}$ where $v_{2}v_{3}$ is the spar of $K$, and $V(K') = \{u_{1},u_{2},u_{3},u_{4}\}$ where $u_{2}u_{3}$ is the spar of $K'$. Observe that $K''$ does not contain any of $v_{2},v_{3},u_{2},u_{3}$ as these vertices have degree $3$ in $G$. Suppose $v_{1} \in V(K'')$ but $v_{4} \not \in V(K'')$. Then note that since $|V(K) \cap V(K'')| \geq |V(K') \cap V(K'')|$, it follows that $K'-K''$ contains a triangle $T$. But then $G$ contains a triangle packing of size at least three, namely the triangle in $K''$ containing $v_{1}$, the triangle in $K$ not containing $v_{1}$, and $T$. Thus we can assume that both $v_{1}$ and $v_{4} \in V(K'')$. Let $w_1$ and $w_2$ be the endpoints of the spar of $K''$. Then since $G$ is 4-critical, $G-w_{1}-w_{2}$ has a 3-colouring $\varphi$ which extends to a 3-colouring of $G$ by setting $\varphi(w_1) = \varphi(v_2)$ and $\varphi(w_2) = \varphi(v_3)$. This contradicts the fact that $G$ is 4-critical.   Thus it follows $G$ has exactly two subgraphs isomorphic to a kite.  
\end{proof}
\begin{lemma}
\label{splittinglemma}
Let $G$ be $4$-Ore with $T^{3}(G) = 2$. Let $v \in V(G)$, and let $G^{v}$ be the graph obtained by splitting $v$ into two vertices $v_1$ and $v_2$ of positive degree with $N(v_1) \cup N(v_2) = N(v)$ and $N(v_1) \cap N(v_2) = \emptyset$. Then either
\begin{enumerate}[(i)]
\item{$T^{3}(G^{v}) \geq 2$, or}
\item{ $\deg(v) = 3$, there is an $i \in \{1,2\}$ such that $\deg(v_{i}) = 1$, and the edge $e$ incident to $v_{i}$ is the spar of a kite in $G$.} 
\end{enumerate}
\end{lemma}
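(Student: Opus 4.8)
The plan is to induct on $v(G)$, using the fact that a $4$-Ore graph with $T^3(G) = 2$ is either $K_4$ (impossible here, since $T^3(K_4) = 1$, by Corollary \ref{onecliquecharacterization}) or an Ore composition of two graphs $H_1$ and $H_2$, with $H_1$ the edge side (deleting $xy$) and $H_2$ the split side (splitting $w$ into $w_1, w_2$, identified with $x, y$ respectively). Given the vertex $v$ to be split, I would first observe that $v$ lies in exactly one of the ``halves'' of $G$: either $v \in V(H_1) \setminus \{x,y\}$, or $v \in V(H_2) \setminus \{w\}$, or $v \in \{x,y\}$ (i.e. $v$ is one of the split vertices $w_1$ or $w_2$). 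In each case, splitting $v$ in $G$ corresponds to performing a split inside one of $H_1$ or $H_2$ (or a slightly more delicate operation when $v \in \{x,y\}$, since then the split interacts with the identification), and then re-composing. The strategy is to push the split down into the appropriate side, apply the induction hypothesis there to get either two disjoint triangles surviving or the degenerate kite-spar conclusion, and then lift this back up to $G^v$ using Proposition \ref{cliqueboundinequality} and Corollary \ref{Kkbound} to control how $T^3$ behaves under the composition.

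More concretely: suppose $v \in V(H_1) \setminus \{x,y\}$ and consider $H_1^v$. One subtlety is that $H_1$ need not satisfy $T^3(H_1) = 2$ — it could be $K_4$, or it could have $T^3(H_1) = 3$. If $H_1 = K_4$, then $H_1 - xy$ is a kite and any split of a vertex $v$ of $H_1 = K_4$ either leaves a triangle in $H_1^v - xy$ (giving, together with a disjoint triangle from $H_2$ via Observation \ref{deletingaclique}-type reasoning, two disjoint triangles in $G^v$, so (i) holds) or reduces $v$ to the degenerate case where the conclusion (ii) can be read off directly, using that the surviving edge is the spar of the kite $H_1 - xy$. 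If $H_1 \neq K_4$ and $T^3(H_1) = 2$, apply induction to $H_1^v$; if instead $T^3(H_1) \geq 3$ (which forces $T^3(H_2)$ to be correspondingly small by Proposition \ref{cliqueboundinequality} together with $T^3(G) = 2$), one checks directly that splitting a single vertex cannot drop $T^3(H_1)$ below $2$ (a split removes at most one triangle from a packing), so $T^3(H_1^v) \geq 2$ and thus $T^3(G^v) \geq 2$, giving (i). The case $v \in V(H_2) \setminus \{w\}$ is symmetric, using that splitting $v$ inside $H_2$ commutes with the splitting of $w$ and the composition. The case $v \in \{w_1, w_2\}$ needs separate handling: here splitting $v$ in $G$ amounts to re-splitting one of the two vertices that arose from $w$, which one can analyze by viewing it as a different split of $w$ itself inside $H_2$, then applying induction to $H_2$.

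Throughout, the bookkeeping that makes the induction close is the three-way classification in Proposition \ref{cliqueboundinequality}: since $T^3(G) = 2$, the ``loss'' $f(H_1, H_2)$ is constrained, which forces $T^3(H_1) + T^3(H_2) \in \{2, 3, 4\}$ with the larger values only occurring when one side has packings avoiding the relevant edge/vertex. This lets me argue that in almost all cases splitting a vertex leaves two disjoint triangles; the degenerate conclusion (ii) only arises at the ``bottom'' of the recursion, essentially when $v$ is a degree-three vertex incident to the spar of one of the two kites guaranteed by Lemma \ref{K4e}, and the split isolates exactly the spar edge.

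The main obstacle I expect is the case analysis around $v \in \{w_1, w_2\}$ and, relatedly, keeping precise track of how a kite and its spar in a smaller graph $H_i$ survive (with the spar intact and the degree-three endpoints still of degree three) after the Ore composition re-attaches things — in particular making sure that when conclusion (ii) holds in $H_i^v$, the distinguished edge $e$ is still the spar of a genuine kite \emph{in $G$}, i.e. that the composition did not raise the degree of one of the two degree-three vertices of that kite. This is exactly the kind of local-degree check that Lemma \ref{K4e} is designed to feed, so the proof will lean on the ``two vertex-disjoint kites'' refinement there (and the special role of the Moser spindle $M$) to guarantee one can always choose the relevant kite to lie away from $x$ and $y$.
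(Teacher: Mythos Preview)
Your overall strategy matches the paper's: induct on $v(G)$, decompose $G$ as an Ore composition of $H_1$ (edge side, deleting $xy$) and $H_2$ (split side, splitting $z$), and case on where $v$ lies. Two points deserve correction, though.

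First, the case $v \in \{x,y\}$ (your $v \in \{w_1,w_2\}$). Viewing this as ``a different split of $w$ in $H_2$'' does not work: the vertex $v=x=w_1$ has neighbours in \emph{both} $H_1-xy$ and $H_2^w$, so splitting it in $G$ mixes the two sides and does not reduce to a split inside $H_2$. The paper's argument here is actually simpler than you suggest: once one checks that $T^3(H_1)\geq 2$ and $T^3(H_2)\geq 2$ (which, incidentally, always holds when neither side is $K_4$ --- your case $T^3(H_1)\geq 3$ is vacuous, since Proposition~\ref{cliqueboundinequality} would force $T^3(G)\geq 3$), one has a triangle in $H_1-x$ and a triangle in $H_2-z$, both disjoint from $v$, so $T^3(G^v)\geq 2$ regardless of how $v$ is split.

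Second, and more substantially, the lifting step you flag as ``the main obstacle'' is exactly where the work is, and your plan for it is too vague. When induction on (say) $H_2$ returns conclusion (ii), you get a \emph{specific} kite $K$ in $H_2$ with spar through $v$; you cannot choose it to avoid $z$. If $z\notin V(K)$, the kite survives in $G$ and you are done. But if $z\in V(K)$, Lemma~\ref{K4e} alone does not resolve the situation. The paper splits this further: if $vz$ is the spar, then after the Ore composition $v$ lies in no triangle of $G$ at all, so $T^3(G^v)=T^3(G)=2$; if $z$ is in $K$ but not on the spar, one shows $H_2\neq M$ (else $T^3(G)\geq 3$), invokes the two vertex-disjoint kites $D_1,D_2$ of Lemma~\ref{K4e}, and argues that one of them must equal $K$ (else $T^3(H_2)\geq 3$), which then forces $T^3(H_2-z)\geq 2$ and hence $T^3(G)\geq 3$, a contradiction. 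A parallel issue arises on the $H_1$ side when the kite from induction contains the edge $xy$; there the paper uses Observation~\ref{deletingaclique} on the triangle $\{x,y,v\}$. You should expect to need these arguments rather than just Lemma~\ref{K4e}.
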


\begin{proof}

We proceed by induction on the number of vertices. First suppose that $G =M$. If $v$ is the unique vertex of degree $4$, then it is easy to verify that $G-v$ contains two vertex-disjoint triangles, and hence $T^{3}(G^{v}) \geq T^3(G-v) \geq 2$.  Now suppose that $v$ is incident to one of the two spars of kites. Again, it is easy to check that if the vertex of degree one is incident to the spar of the kite, then  $T^{3}(G^{v}) = 1$, and otherwise $T^{3}(G^{v}) \geq 2$. Lastly, if $v$ is either of the other two vertices in $M$, then one simply checks that $T^{3}(G^{v}) \geq 2$ for any split. 

Therefore we can assume that $G \neq M$. Let $G$ be the Ore composition of $H_{1}$ and $H_{2}$ where $H_{1}$ is the edge side where we delete the edge $xy$, and $H_{2}$ is the split side where we split the vertex $z$ into two vertices $z_{1}$ and $z_{2}$. \\
\textbf{Case 1: $H_{1} = K_{4}$.}\\
We can assume that $H_{2} \neq K_{4}$, as otherwise $G = M$. This implies that $T^{3}(H_{2}) = 2$: if $T^{3}(H_{2}) \geq 3$, then Proposition \ref{cliqueboundinequality} implies that $T^{3}(G) \geq 3$, a contradiction.  First suppose that $v \in \{x,y\}$.  Without loss of generality, let $v =x$. By Observation \ref{deletingavertex}, there is a triangle in $H_{2} -z$. Since there is also a triangle in $H_{1}- x$, we have that $T^{3}(G^{v}) \geq 2$ as desired. 

Now suppose that $v \in V(H_{1})\setminus \{x,y\}$. Note $H_{1}-xy$ is a kite. Assume $\deg(v_{1}) =1$. If $v_{1}$ is not incident to the spar of a kite, then $(H_{1}-xy)^{v}$ contains a triangle. By Observation \ref{deletingavertex} there is also a triangle in $H_{2} -z$. This implies that $T^{3}(G^{v}) \geq 2$, as desired. 

Thus we can assume that $v \in V(H_{2} -z)$. We apply induction to $H_{2}$. If $T^{3}(H_{2}^{v}) \geq 2$, this implies $T^{3}(H_{2}^{v}-z) \geq 1$, and hence $T^{3}(G^{v}) \geq 2$.  Otherwise we split $v$ in such a way that in $H_{2}^{v}$, $\deg(v_{1}) = 1$ and $v_1$ is incident to a spar of a kite in $H_{2}$. 
Let $K$ be this kite. Note if $z \not \in V(K)$ then the same split occurs in $G^{v}$ and $(ii)$ occurs. Hence $z \in V(K)$.  If $T^{3}(H_{2}-z) \geq 2$, then $T^{3}(H_{2}^{v} - z) \geq 1$, and it follows that $T^{3}(G^{v}) \geq 2$. Hence every triangle packing of $H_{2}$ uses the vertex $z$. 

First suppose $H_2 = M$. Since every triangle packing of $H_2$ uses $z$, it follows that $z$ is the endpoint of the spar of a kite $K'$. Since $z \in V(K)$ and $z$ is the endpoint of the spar of $K'$, it follows that $K = K'$. But then $T^3(H_2 - z - v) = 1$, and so $T^3(G^v) \geq 2$, a contradiction. Therefore $H_{2} \neq M$, and thus by Lemma \ref{K4e}, $H_{2}$ contains two vertex disjoint kites $K^*$ and $K^{**}$. By Corollary \ref{onlytwok4e}, we may assume $K^{**} =K$. But then there is a triangle in $H_{2}-z-v$ (namely, either triangle in $K^*$), and thus $T^{3}(G) \geq 2$ as desired. \\
\textbf{Case 2: $H_{2} = K_{4}$.}\\
In this case, $T^{3}(H_{1}) = 2$ and every triangle packing of $H_{1}$ uses the edge $xy$ since otherwise $G = M$ or $T^{3}(G) \geq 3$ by Proposition \ref{cliqueboundinequality}.

First suppose that $v \in \{z_{1},z_{2}\}$. Without loss of generality, let $v=z_{1}$. Note that $T^3(H_2-z) \geq 1$ by Observation \ref{deletingavertex}, and so $T^{3}(H_{2}^{z}- \{z_{1},z_{2}\}) \geq 1$. Since $T^{3}(H_{1}-z_{1}) \geq 1$ again by Observation \ref{deletingavertex}, it follows that $T^{3}(G^{v}) \geq 2$, as desired. 

Now suppose that $v \in V(H_{1}) \setminus \{x,y\}$. Consider $H_{1}^{v}$, where we perform the same split as in $G^v$. First suppose that $T^3(H_{1}^{v}) \geq 2$. Then $H_{1}^{v}-xy$ contains at least one triangle. Since there is a triangle in $H_{2}^z - \{z_{1},z_{2}\}$, it follows that $T^{3}(G^{v}) \geq 2$ as desired. Therefore we may assume that  $T^3(H_1^v) < 2$, and so by induction $v$ is incident to the spar in a kite $K$ in $H_{1}$, and after splitting $v_{1}$ has degree one and is incident to the spar of the kite. If $K$ is in $G$, then we are done. Therefore we may assume that $xy \in E(K)$, and thus $\{x,y,v\}$ induces a triangle in $H_{1}$. Since $H_1 \neq K_4$ by assumption, by Observation \ref{deletingaclique} we have that $H_1 - \{x,y,v\}$ contains a triangle. As there is also a triangle in $H_{2}- z$ by Observation \ref{deletingavertex}, it follows that $T^{3}(G^{v}) \geq 2$, as desired. 

The final subcase to consider is when $v \in V(H_{2}) - \{z_{1},z_{2}\}$. If $v$ is not incident to the spar of the kite in $H_{2}^{z}$, then any split of $v$ leaves a triangle in $(H_2^{z})^v$. Recall that in Case 2, every triangle packing of $H_1$ uses the edge $xy$, and $T^3(H_1) = 2$. Thus there is a triangle in $H_{1} - \{x,y\}$, and so $T^{3}(G^{v}) \geq 2$ as desired. A similar argument works for the other splits, unless we split $v$ in such a way that $v_{1}$ has degree one and is incident to a spar of a kite in $G$. \\
\textbf{Case 3: Neither $H_{1}$ nor $H_{2}$ is $K_{4}$.}\\
First suppose that $v \in \{x,y\}$ and without loss of generality, that $v =x$. Note that since $T^{3}(H_1) \geq 2$ and $T^3(H_2) \geq 2$, it follows that  $T^{3}(H_{1}-x) \geq 1$ and  $T^{3}(H_{2}^z -\{z_{1},z_{2}\}) \geq 1$. Hence in this case, after splitting $v$ we have that $T^{3}(G^{v}) \geq 2$.

 Next suppose that $v \in V(H_{1})\setminus \{x,y\}$. If $T^3(H_1^v) \geq 2$, then it follows that $T^3(H_1^v-xy) \geq 1$. Since $T^3(H_2^z-\{z_1,z_2\}) = 1$, we have that $T^3(G^v) \geq 2$, and so (i) holds. Thus by induction we may assume that $\deg(v) = 3$, and $v_{1}$ has degree one and is incident to the spar of a kite in $H_{1}$. Let $K$ be this kite. If $K$ avoids both $x$ and $y$ then $(ii)$ holds in $G^{v}$ and we are done. Otherwise $x,y,v$ induce a triangle, and by Observation \ref{deletingaclique}, $H_{1} - \{x,y,v\}$ contains a triangle, and $H_{2}^{z}- \{z_{1},z_{2}\}$ contains a triangle by Observation \ref{deletingavertex}. Hence $T^{3}(G^{v}) \geq 2$ as desired. 
 
 Finally suppose that $v \in V(H_{2}) \setminus \{z_{1},z_{2}\}$. If $T^{3}(H_{2}^{z}) \geq 2$, then $T^{3}((H_{2}^{z})^{v}) \geq 1$, and since $T^{3}(H_{1}-xy) \geq 1$, it follows that $T^{3}(G^{v}) \geq 2$ as desired. Therefore $v$ has degree $3$, lies in a kite $K$ in $H_{2}$,  $\deg(v_{1}) = 1$ and $v_1$ is incident to the spar of the kite in $K$. If $z \not \in V(K)$, then $(ii)$ occurs in $G^{v}$. So $z \in V(K)$. Let $T$ be a triangle in $K$ which contains $z$ and $v$. Then by Observation \ref{deletingaclique}, $H_{2} - T$ contains a triangle, and since deleting any vertex in $H_{1}$ leaves a triangle, it follows that $T^{3}(G^{v}) \geq 2$. 
\end{proof}

We now describe the structure of the $4$-Ore graphs with triangle packings of size three.

\begin{lemma}
\label{deletingatriangle}
Let $G$ be a $4$-Ore graph with $T^{3}(G) =3$, and let $T$ be a triangle in $G$. Either $T^{3}(G-T) \geq 2$, or there exists a kite in $G-T$.
\end{lemma}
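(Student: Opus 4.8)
The plan is to argue by induction on $v(G)$, following the Ore decomposition of $G$ as in the proofs of Lemmas~\ref{K4e} and~\ref{splittinglemma}. Since $T^{3}(G) = 3 \neq 1$, Corollary~\ref{onecliquecharacterization} gives $G \neq K_{4}$, so $G$ is the Ore composition of an edge side $H_{1}$ (deleting edge $xy$) and a split side $H_{2}$ (splitting $z$ into $z_{1}, z_{2}$), with $x = z_{1}$ and $y = z_{2}$ in $G$. The key preliminary observation is that, because $xy \notin E(G)$, there is no edge of $G$ between $V(H_{1}) \setminus \{x,y\}$ and $V(H_{2}) \setminus \{z\}$, and $x, y$ are nonadjacent; hence every triangle of $G$ --- in particular the given triangle $T$ --- lies entirely in $V(H_{1})$ or entirely in $V(H_{2}^{z})$, and if $T$ uses one of $x, y$ then it cannot use the other. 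Using Proposition~\ref{cliqueboundinequality} together with Corollaries~\ref{onecliquecharacterization} and~\ref{Kkbound}, I would first record that $H_{1}$ and $H_{2}$ are not both $K_{4}$ (else $G$ is the Moser spindle, which has $T^{3} = 2$), that $T^{3}(H_{1})$ and $T^{3}(H_{2})$ cannot both be at least $3$, and that whenever one of the pieces is not $K_{4}$ its value of $T^{3}$ is $2$ or $3$.

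The induction step is a (lengthy) case analysis on which of $H_{1}, H_{2}$ is a copy of $K_{4}$ and on which side contains $T$. The repeatedly used mechanisms are: (i) a copy of $K_{4}$ on either side, once the deleted edge or split vertex is taken into account, contains a $K_{4}-e$ whose two degree-three vertices keep all their neighbours, hence a kite of $G$; if $T$ is disjoint from that kite it survives in $G-T$. (ii) When $T$ lies in the interior of one side and avoids $x, y$, Observations~\ref{deletingavertex} and~\ref{deletingaclique} produce a triangle on the $H_{1}$-side (in $H_{1}$ minus a vertex, or in $H_{1}-xy$) and a triangle on the $H_{2}$-side (in $H_{2}$ minus the translate $T'$ of $T$), and these are automatically vertex-disjoint since their only possible common vertex would be one of the identified vertices. (iii) When $T$ lies on a side that is $4$-Ore with $T^{3} = 3$, I would apply the induction hypothesis to that piece with the triangle $T'$ and transport the resulting kite or pair of disjoint triangles back into $G-T$. (iv) When the side containing $T$ has $T^{3} = 2$, I would apply Lemma~\ref{K4e} to obtain two (vertex- or, for the Moser spindle, edge-) disjoint kites of that piece, at least one of which avoids the identified vertices and so survives in $G-T$, or apply Lemma~\ref{splittinglemma} when $T$ meets the split vertex. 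Throughout, one also has the slack of being allowed to choose the triangle produced by Observation~\ref{deletingavertex} to avoid a prescribed one of $x, y$.

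The main obstacle is transporting a kite (or a disjoint-triangle pair) found inside one piece back into $G - T$. A kite $D$ of $H_{2}$ need not remain a kite of $G$: if a spar endpoint of $D$ is adjacent to $z$ in $H_{2}$, then in $G$ it is adjacent to $z_{1}$ or $z_{2}$, and deleting $T$ can then leave it with degree four; dually, splitting $z$ can destroy the triangles of a $3$-clique packing of $H_{2}$, and it can happen that $z$ lies in every triangle of $H_{2}-T'$, and in both triangles of every kite of $H_{2}-T'$. This is precisely the behaviour quantified by Lemma~\ref{splittinglemma}(ii), so the subcases in which $T$ meets or is forced through a split vertex must follow the case distinction of that lemma, and the Moser spindle $M$ must be treated by hand throughout (it has only two kites, which share a vertex). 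What ultimately makes the argument go through is that whenever $T$ lies on the split side, the edge side always supplies a disjoint triangle via Observation~\ref{deletingavertex} --- since $T$ cannot contain both $x$ and $y$, at least one remains to anchor a triangle in $H_{1}-x$, $H_{1}-y$, or $H_{1}-xy$ --- so on the $H_{2}$-side it is enough to keep a single triangle alive, which is far weaker than keeping a kite; the genuinely delicate situations are the few in which one side degenerates to a kite with pendant material attached, and the surviving kite must be exhibited explicitly.
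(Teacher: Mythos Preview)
Your plan is correct and takes essentially the same approach as the paper: induction on $v(G)$ via the Ore decomposition, a case split on which of $H_{1}, H_{2}$ is $K_{4}$ and on which side $T$ lies, driven by Observations~\ref{deletingavertex}--\ref{deletingaclique}, Proposition~\ref{cliqueboundinequality}, Lemma~\ref{K4e}, the induction hypothesis on the side with $T^{3}=3$, and ad hoc analysis of the Moser spindle. One small note: the paper does not invoke Lemma~\ref{splittinglemma} here---the subcases where $T$ meets $z_{1}$ or $z_{2}$ are handled directly by pairing a triangle in $H_{1}-z_{i}$ (from Observation~\ref{deletingavertex}) with a triangle in $H_{2}-T'$ (from Observation~\ref{deletingaclique}) or a surviving kite from Lemma~\ref{K4e}---so you can drop that ingredient.
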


\begin{proof}
Suppose not, and let $G$ and $T \subseteq G$ form a vertex-minimum counterexample. Since $G \neq K_{4}$, we have that $G$ is the Ore composition of two $4$-Ore graphs $H_{1}$ and $H_{2}$. Up to relabelling, we may assume that $H_{1}$ is the edge side of the composition where we delete the edge $xy$, and that $H_{2}$ is the split side where we split the vertex $z$ into two vertices $z_{1}$ and $z_{2}$. Observe that at most one of $z_{1}$ and $z_{2}$ is in $V(T)$. Additionally, notice that if $T^3(H_{i}) \geq 3$ for each $i \in \{1,2\}$, then by Proposition \ref{cliqueboundinequality} we have that $T^{3}(G) \geq T^{3}(H_{1}) +T^{3}(H_{2}) -2 \geq 4$, contradicting a hypothesis of the lemma. We break into cases depending on which (if any) of $H_{1}$ and $H_{2}$ is isomorphic to $K_{4}$. \\
\textbf{Case 1: $H_{1} =K_{4}$.} \\
Note that $H_{2} \neq K_{4}$ as otherwise $T^{3}(G) =2$. We may assume that $T^3(H_2) \leq 3$; otherwise, Proposition \ref{cliqueboundinequality} gives $T^3(G) \geq T^3(K_4) + T^4(H_2)-1 \geq 1 + 4 - 1 = 4$, contradicting a hypothesis of the lemma.\\ 
\textbf{Subcase 1: $T^{3}(H_{2}) = 3$.}\\
We may assume that $T^{3}(H_{2}^{z}-z_1-z_2) \leq 2$ since otherwise $T^{3}(G) \geq 4$, a contradiction. Note this implies that $T^{3}(H_{2}^{z}-z_{1}-z_{2}) = 2$, since $T^{3}(H_{2}-z) \geq T^{3}(H_{2}) -1 \geq 2$. Hence every triangle packing of $H_{2}$ has a triangle which contains the vertex $z$. If $V(T) \subseteq V(H_1)$, then as $T^{3}(H_{2}^{z}-z_{1}-z_{2}) = 2$, we have $T^{3}(G-T) \geq 2$, a contradiction. Thus we may assume $V(T) \subseteq V(H_{2}^{z})$. Note that $T$ contains one of $z_1$ and $z_2$, since otherwise $H_1-xy$ is a kite in $G-T$, a contradiction. Without loss of generality, let $z_{1} \in V(T)$. Let $T'$ be the triangle in $H_{2}$ whose vertex set is $V(T) \setminus \{z_{1}\} \cup \{z\}$.  Consider $H_{2}-T'$. By minimality, either $T^3(H_2-T') \geq 2$, or $H_2-T'$ contains a kite. First suppose $T^3(H_{2}-T') \geq 2$.  Since $z \in V(T')$, it follows that there are two vertex-disjoint triangles in $H^z_2-T$, a contradiction.   Therefore $H_2-T'$ contains a kite, $D$; but then $D$ exists in $G-T$, a contradiction.\\
\textbf{Subcase 2 : $T^{3}(H_{2}) =2$.}\\
Again up to relabelling $z_{1}$ with $z_{2}$, we may assume $z_{1} \in V(T)$, as otherwise the $G-T$ contains the kite $H_{1}-e$. By Lemma \ref{K4e}, either $H_{2} = M$, or $H_{2}$ contains two vertex-disjoint kites. First consider the case where $V(T) \subseteq V(H_{1})$. If there is a kite in $H_2^z$, then $G-T$ contains a kite and we are done.  Thus we may assume that $H_2^z$ does not contain a kite, and so that $H_{2} = M$ and $z$ is the unique vertex of degree $4$ in $H_2$. But in this case there are two vertex-disjoint triangles in $H_{2}-z$, which implies that $T^{3}(G-T) \geq 2$, a contradiction.
 
Thus for the remainder of the analysis, we assume that $V(T) \subseteq V(H_{2}^z)$, and $z_{1} \in V(T)$. Let us deal with the case where $H_{2} = M$ first. Suppose that $z$ is the unique vertex of degree $4$ in $M$. Since $z_{1} \in V(T)$, we have that $H_{2}^{z}-T$ contains a triangle that avoids $z_{2}$. Since  $H_{1}-z_{1}$ also contains a triangle, it follows that $T^{3}(G-T) \geq 2$, contradicting the fact that $G$ is a counterexample. Thus $z$ is not the unique vertex of degree $4$ in $M$. If $z$ is any of the vertices in $M$ incident to a spar of a kite, then either $T^{3}(G) =2$ (a contradiction), or for any triangle intersecting $z_{1}$ in $H_{2}^{z}$, there is a triangle in $H_{2}^{z}-T-z_{2}$. Thus it follows that $T^{3}(G-T) \geq 2$ by using the triangle in $H_{1}-xy$ which contains $z_{2}$. If $z$ is either of the other two vertices of degree $3$, we have a kite in $H_{2}^{z} - T$, and hence there is a kite in $G-T$, a contradiction.

Therefore $H_{2} \neq M$, and so by Lemma \ref{K4e} we have that $H_{2}$ contains two vertex-disjoint kites $D_{1}$ and $D_{2}$. Without loss of generality, we may assume $V(D_{1}) \subseteq V(H_{2}-z)$.  We claim that no vertex in $D_1$ incident with the spar is contained in $T$. To see this, suppose not, and let $v$ be a vertex incident with the spar of $D_1$. If $v$ is in $T$, then since all neighbours of $v$ are in $D_1$ and $z$ is in $T$, it follows that $v$ is adjacent to $z$. But then $z$ is in $D_1$, contradicting the definition of $D_1$. Moreover, we claim at most one vertex of $D_1$ is contained in $T$.
If the two vertices in $D_{1}$ which are not incident to the spar of $D_{1}$ are in $T$, then $G$ contains a $K_{4}$ subgraph, which implies $G = K_{4}$, a contradiction. 
It follows that we have $T^{3}(H_{2}-T) \geq 1$. Hence using one of the triangles in $H_{1}-xy$, we see that $T^{3}(G-T) \geq 2$, contradicting the fact that $G$ is a counterexample.

\noindent
\textbf{Case 2: $H_{2} = K_{4}$.}

Note in this case $H_{1} \neq K_{4}$ as otherwise $T^{3}(G) =2$.  Similar to the previous case, we may assume that $T^3(H_1) \leq 3$; otherwise, Proposition \ref{cliqueboundinequality} gives $T^3(G) \geq T^3(K_4) + T^4(H_1)-1 \geq 1 + 4 - 1 = 4$, contradicting the hypothesis of the lemma.\\
\textbf{Subcase 1: $T^{3}(H_{1}) = 3$.}\\
In this case, $T^{3}(H_{1}-xy) = 2$ since otherwise Proposition \ref{cliqueboundinequality} implies $T^{3}(G) =4$. It follows that every triangle packing of $H_{1}$ contains a triangle using the edge $xy$, and thus there are two vertex-disjoint triangles in $H_{1}-x-y$. If $V(T) \subseteq V(H_{2}^{z})$, then since $T^{3}(H_{1}-x-y) =2$, it follows that $T^{3}(G-T) \geq 2$, a contradiction. Thus $V(T) \subseteq V(H_{1})$. Now consider $H_{1}-T$. By minimality, we have two possibilities: either $T^3(H_1-T) \geq 2$, or $H_1-T$ contains a kite. If $H_{1}-T$ contains two vertex-disjoint triangles, then $H_{1}-T-xy$ contains at least one triangle. Since $H_{2}-\{z_{1},z_{2}\}$ contains a triangle, we see that $T^{3}(G-T) \geq 2$, a contradiction. Otherwise, $H_{1}-T$ contains a kite $D$. Thus either $xy$ is the spar of $D$, or $T^{3}(H_{1}-T-xy) \geq 1$. If $T^{3}(H_{1}-T-xy) \geq 1$, then again using the triangle in $H_{2}- \{z_{1},z_{2}\}$ we see that $T^{3}(G-T) \geq 2$. Thus $xy$ is the spar of $D$. In this case, $V(T) \subseteq V(H_{1}- x-y)$ as neither $x$ nor $y$ lies in a triangle. But then $G-T$ contains the kite in $H_{2}^{z}$. \\
\textbf{Subcase 2: $T^{3}(H_{1}) = 2$.} \\
Suppose first that $H_{1} = M$. If $xy$ is not incident to the unique vertex of degree $4$, then either there is a kite in $H_{1} -xy$ that avoids both $x$ and $y$, or $H_{1}-xy$ contains two edge-disjoint kites. First suppose there is a kite in $H_{1}-xy$ that avoids both $x$ and $y$. Observe there is a kite in $H_{2}^{z}$. Since either $V(T) \subseteq V(H_{1})$ or $V(T) \subseteq V(H_{2}^{z})$, by the structure of the Moser spindle it follows that $G-T$ contains a kite for any $T$. Now consider the case where $H_{1}-xy$ contains two edge-disjoint kites. Since $T$ avoids both $x$ and $y$, $T$ contains the unique vertex of degree $4$ in $M$. Otherwise, $H_1-xy-T$ contains a kite. But then $H_{1}-xy-T$ contains a triangle using (say) $z_{1}=x$. As $H_{2}^{z}-z_{1}$ contains a triangle, we have that $T^{3}(G-T) \geq 2$, a contradiction.

So we may assume that $xy$ is incident to the unique vertex of degree $4$ in $H_1=M$. Note that in this case, either up to relabeling $\deg(z_1) = 5$ and $\deg(z_2) = 3$, or $\deg(z_1) = \deg(z_2) = 4$. If $\deg(z_1) = 5$, then $T$ contains $z_1$: otherwise, $G-T$ contains a kite. If $T \subseteq H_1-xy$, then since $H_1-xy$ contains a triangle disjoint from $T$ and $H_2-z$ contains a triangle, it follows that $T^3(G-T) = 2$, a contradiction. If on the other hand $T \subseteq H_2^z$, then since $T^3(H_1-xy-z_1)\geq 2$, again it follows that $T^3(G-T) \geq 2$. Thus we may assume $\deg(z_1) = \deg(z_2) = 4$.  But then $G$ contains two vertex-disjoint kites $K_1$ and $K_2$, and no triangle in $G$ intersects both $K_1$ and $K_2$. Thus $G-T$ contains a kite, again contradicting the fact that $G$ is a counterexample.

Therefore by Lemma \ref{K4e} we may assume that $H_{1} \neq M$, and so that $H_1$ contains two vertex-disjoint kites $D_{1}$ and $D_{2}$. Up to relabelling, let $z_{1}$ be in the kite in $H_{2}^{z}$. First suppose $V(T) \subseteq V(H_{2}^{z})$. Note there is kite in $H_{1}-xy$ not using $z_1$. Since $z_1z_2 \not \in E(G)$, we have that $z_{2} \not \in V(T)$. Thus $H_1-xy-T = H_1-xy-z_1$, and so $G-T$ contains at least one of the kites $D_{1}$ and $D_{2}$, a contradiction. \\
Therefore we may assume that $H_1 \neq M$. Up to relabeling, let $z_1$ be in the kite in $H_2^z$. First suppose $T \subseteq H_1-xy$. Note then that $z_1 \in V(T)$, since otherwise $G-T$ contains the kite in $H_z^2$.  Thus $H_1-xy-T = H_1-T$, since $xy$ is incident with a vertex in $T$. By Observation \ref{deletingaclique}, $H_1-T$ contains a triangle. Since $H_2-z$ also contains a triangle, it follows that $T^3(G-T) \geq 2$, a contradiction.  Thus we may assume $T \subseteq H_2^z$.  By Lemma \ref{K4e}, since $H_1 \neq M$, $H_1$ contains two vertex-disjoint kites. But then $H_1-z_1 \subseteq H_1-T$ contains a kite, contradicting the fact that $G$ is a counterexample.
 \\
\noindent
\textbf{Case 3: Neither $H_{1}$ nor $H_{2}$ is $K_{4}$.} \\
 Recall that by Corollary \ref{onecliquecharacterization}, both we have $T^3(H_1) \geq 2$ and $T^3(H_2) \geq 2$. Similar to the previous two cases, we may assume that $T^3(H_2) \leq 3$ and $T_3(H_1)\leq 3$; otherwise, Proposition \ref{cliqueboundinequality} gives $T^3(G) \geq  4$, contradicting the hypothesis of the lemma.\\
\textbf{Subcase 1: $T^{3}(H_{1}) =2$ and $T^{3}(H_{2}) = 2$.}\\
Note that by Lemma \ref{K4e},  $H_1$ and $H_2$ contains two edge-disjoint kites. If $T \subseteq H_2^z-z_1-z_2$, then $H_1-xy$ (and therefore $G-T$) contains a kite, a contradiction. Moreover, if $T \subseteq H_1-z_1-z_2$, then either $H_2^z$ (and therefore $G-T$) contains a kite, or $H_2 = M$ and $z$ is the unique vertex of degree $4$ in $M$, in which case $T^3(G-T)\geq T^3(H_2-z) \geq 2$. Thus we may assume that $T$ contains one of $z_1$ and $z_2$: up to relabeling, suppose $T$ contains $z_1$. If $T \subseteq H_1-xy$, then $H_1-xy-T = H-T$. By Observation \ref{deletingaclique}, $H_1-xy-T$ (and therefore $G-T$) contains a triangle. Since $T^3(H_2) = 2$, there is a triangle in $H_2-z$, and so $T^3(G-T) \geq 2$, contradicting the fact that $G$ is a counterexample. If, on the other hand, $T \subseteq H_2^z$, then since $T^3(H_1) = 2$, again we have $T^3(H_1-z_1) \geq 1$. Since $z_1 \in T$, it follows that $H_2-T \subseteq H_2^z-z_1-z_2$. By Observation \ref{deletingaclique}, $H_1-T$ (and thus $H_2^z-z_1-z_2$) also contains a triangle. Thus $T^3(G-T) \geq 2$, again a contradiction. \\
 \textbf{Subcase 2: $T^{3}(H_{1}) = 3$.}\\
Recall that $H_2 \neq K_4$ by assumption; and as noted prior to Case 1, if $T^3(H_1) \geq 3$, then $T^3(H_2) < 3$. Thus $T^3(H_2) = 2$. Suppose first that every triangle packing of $H_1$ uses the edge $xy$. Then $H_1-x-y$ has a triangle packing of size two, and so $T \subseteq H_1-xy$ as otherwise $T^3(G-T) \geq 2$ and we are done. Similarly, if there is a triangle packing of $H_1$ that does not use the edge $xy$, then $T^3(H_1-xy) = 3$, and so again $T \subseteq H_1-xy$, as otherwise  $T^3(G-T) \geq 2$ and we are done (since $T$ contains at most one of $x$ and $y$). Since $T^3(H_2) = 2$, it follows from Lemma \ref{K4e} that $H_2$ contains two edge-disjoint kites $D_1$ and $D_2$. Thus either $H_2^z$ contains a kite that avoids both $z_1$ and $z_2$ (and so $G-T$ contains this kite), or $z \in D_1 \cap D_2$. In this case, $H_2 = M$, and $z$ is the unique vertex of degree $4$ in $M$. But then $H_2^z-z_1-z_2$ contains a triangle packing of size two, and since $T \subseteq H_1-xy$, it follows that $T^3(G-T) \geq 2$, a contradiction. \\ 
\textbf{Subcase 3: $T^{3}(H_{2})=3$.} \\
Then $T^3(H_z^2-\{z_1, z_2\}) \geq 2$. It follows that $V(T) \subseteq V(H_2^z)$, as otherwise $T^3(G-T) \geq 2$. Note that since $H_1 \neq K_4$ by assumption, $T^3(H_1) \neq 1$. Furthermore, as noted prior to Case 1, $T^3(H_1) < 3$. Thus $T^3(H_1) = 2$.  By Lemma \ref{K4e}, it follows that either $H_1 = M$, or that $H_1$ contains two vertex-disjoint kites. Suppose first  $H_1 = M$. Then either $H_1-xy-T$ contains a kite, or $T^3(H_1-xy-T) = 2$, since $T \subset H_2^z$. (To see this, note that since $T \subseteq H^z_2$ and $T$ contains at most one of $x$ and $y$, removing the edge $xy$ and $T$ from $H_1$ amounts to deleting one edge and at most one of its incident vertices.) Thus we may assume $H_1 \neq M$, and so that $H_1$ contains two vertex-disjoint kites $D_1$ and $D_2$. But then $H_1-xy-T$ contains a kite (since removing $xy$ and $T$ from $H_1$ again amounts to deleting an edge and at most one of its incident vertices, and $D_1$ and $D_2$ are vertex-disjoint).
\end{proof}

\begin{definition}\label{founddef}
Let $G$ be a $4$-Ore graph with $T^3(G) =3$. An edge $f$ is \textit{foundational} if both $T^3(G-f) =2$ and there is no kite in $G-f$.
\end{definition}

\begin{lemma}
\label{foundationaledgesin4Ore}
Let $G$ be a $4$-Ore graph with $T^3(G) =3$. There is at most one foundational edge in $G$.  Moreover, if $f$ is a foundational edge, then $f$ is the spar of a kite.
\end{lemma}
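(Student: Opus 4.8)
The plan is to reduce both assertions to the single claim that \emph{every foundational edge of $G$ is the spar of a kite}, and to prove that claim by induction on $v(G)$, following the Ore-composition case analysis already used in the proof of Lemma \ref{deletingatriangle}. The reduction uses two elementary facts about a $4$-Ore graph $G \neq K_4$ (which is $K_4$-free, since $K_4$ is not $3$-colourable while every proper subgraph of the $4$-critical graph $G$ is): (a) a kite $K$ of $G$ is still a kite of $G - f$ exactly when $f \notin E(K)$, because the two degree-three vertices of a kite have all of their edges inside $K$, so deleting an edge not in $K$ changes neither $E(K)$ nor the relevant degrees; and (b) two distinct kites of $G$ are edge-disjoint, by a short case check showing that sharing the spar of one of them, or sharing a triangle-edge, would force either a $K_4$ subgraph or a vertex of degree $\geq 4$ among the designated degree-three vertices. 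From (a) and (b): if $G$ has at least two kites then every $G - f$ retains one of them, so $G$ has no foundational edge and the lemma is vacuous; and if $f$ is foundational and is the spar of a kite $K$, then it is the only foundational edge, since a second foundational edge would (by the claim) be the spar of a second kite $K' \neq K$, whose spar-edge, being disjoint from $E(K)$, survives in $G - f$ — a contradiction.

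For the claim itself, let $G$ be $4$-Ore with $T^3(G) = 3$ and $f$ a foundational edge. As $T^3(G) \neq 1$, Corollary \ref{onecliquecharacterization} gives $G \neq K_4$, so $G$ is an Ore composition of $4$-Ore graphs $H_1$ (edge side, deleting $xy$) and $H_2$ (split side, splitting $z$ into $z_1 = x$ and $z_2 = y$). As in Lemma \ref{deletingatriangle}, Proposition \ref{cliqueboundinequality} with $T^3(G) = 3$ constrains $T^3(H_1)$ and $T^3(H_2)$ (in particular they are not both $\geq 3$), and the argument splits along the same skeleton of cases and subcases used there — according to which of $H_1, H_2$ is $K_4$ and, otherwise, whether each $T^3(H_i)$ equals $2$ or $3$. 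In each case I would determine on which side $f$ lies (in $E(H_1)\setminus\{xy\}$ or in $E(H_2^z)$) and show that, unless $f$ is the spar of a kite of $G$, one of the two defining conditions of ``foundational'' fails for $f$: either one produces three vertex-disjoint triangles in $G - f$ (typically one from a $K_4$-side on the vertex $z_1$ or $z_2$, or from a kite of a piece, together with two more from the other piece, using $T^3(H_2 - z) \geq T^3(H_2) - 1$, $T^3(H_1 - xy) \geq T^3(H_1) - 1$, and Observations \ref{deletingavertex}--\ref{deletingaclique}), contradicting $T^3(G - f) = 2$; or one produces a kite of $G$ avoiding $f$ (using Lemma \ref{K4e} to extract vertex- or edge-disjoint kites from the pieces, one surviving the composition and the removal of $f$ by fact (a)), contradicting that $G - f$ has no kite. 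The positions of $f$ surviving both tests are exactly those where $f$ is the spar of a kite supported in one piece $H_i$: there one checks the kite persists in $G$ (its degree-three vertices, being distinct from $z_1, z_2$, keep degree three under the composition), or, when $T^3(H_i) = 3$, shows $f$ is a foundational edge of $H_i$ and applies the induction hypothesis.

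I expect the main difficulty to be precisely this last transfer step and, more broadly, the bookkeeping in the subcases where a piece has $T^3 = 3$: ``foundational in $G$'' does not restrict verbatim to a piece because of the deleted edge $xy$ and the vertex split, so one must track carefully how the candidate kite and the size-two packings of $G - f$ meet the four special vertices $x, y, z_1, z_2$, and rule out stray kites appearing near the split — for instance handling separately, as in Lemma \ref{deletingatriangle}, whether $\{\deg_G(z_1), \deg_G(z_2)\}$ equals $\{5,3\}$ or $\{4,4\}$ in the $H_2 = K_4$ case, and using Lemma \ref{splittinglemma} to control what a split does to a kite of a $T^3 = 2$ piece. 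A secondary but genuinely needed point is fact (b) above, the edge-disjointness of distinct kites of a $K_4$-free graph, which is exactly what makes the uniqueness of the foundational edge fall out of the rest.
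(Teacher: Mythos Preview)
Your approach is essentially the paper's: both proceed by writing $G$ as an Ore composition of $H_1$ and $H_2$ and running the same case split on which side is $K_4$ and on the values $T^3(H_i)$, using Lemma~\ref{K4e} and Proposition~\ref{cliqueboundinequality} to locate a kite that survives in $G-f$ unless $f$ is its spar. Your preliminary reduction --- deducing uniqueness from ``every foundational edge is a spar'' via the edge-disjointness of distinct kites in a $K_4$-free graph --- is a genuine streamlining that the paper does not isolate; the paper instead establishes both the spar property and uniqueness simultaneously within each subcase.

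The one place your plan diverges is the anticipated inductive transfer when $T^3(H_i)=3$: you worry about showing that $f$ foundational in $G$ forces $f$ foundational in $H_i$ so that the induction hypothesis applies. This difficulty does not arise. Although the paper opens with ``let $G$ be a vertex-minimum counterexample,'' the argument never actually invokes minimality: in every subcase with $T^3(H_i)=3$ the analysis forces $f$ to lie on the \emph{other} side (the one with $T^3=2$, or the $K_4$ side), where Lemma~\ref{K4e} directly supplies a kite in $G$ not meeting $\{z_1,z_2\}$ (or $\{x,y\}$), and one checks by hand that $f$ must be its spar. So you can drop the inductive transfer entirely and argue directly, which removes precisely the bookkeeping you flagged as the main obstacle.
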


\begin{proof}
Let $G$ be a $4$-Ore graph with $T^{3}(G) =3$. Since $G \neq K_{4}$, it follows that $G$ is the Ore composition of two $4$-Ore graphs $H_{1}$ and $H_{2}$. Up to relabelling, we may assume that $H_{1}$ is the edge side of the composition where we delete the edge $xy$ and that $H_{2}$ the split side of the composition where we split $z$ into two vertices $z_{1}$ and $z_{2}$,  and identify $z_{1}$ with $x$ and $z_{2}$ with $y$. Let $f$ be an edge in $G$, and suppose $f$ is foundational. \\
\textbf{Case 1: $H_{1} = K_{4}$.} \\
 Observe that if $f \not \in E(H_{1})$, then $f$ is not foundational since there is a kite left over after deleting $f$. Therefore $f \in E(H_1)$. If $T^{3}(H_{2}^{z}) = 3$, then since $f \in E(H_{1})$ we have that  $T^{3}(G-f)=3$,  a contradiction. Thus $T^3(H_2^z) = 2$, and so $T^3(H_2) \leq 3$. Further, $T^{3}(H_{2}) \geq 2$, since if $H_{2} =K_{4}$, then $G = M$ and $T^{3}(M) =2$.

If $T^3(H_2) = 3$, then there are two vertex-disjoint triangles in $H_{2}-z$, say $T_{1}$ and $T_{2}$. If $f$ is not the spar of the kite $H_{1}-xy$, then $H_{1}-xy-f$ contains a triangle, so it follows that $T^{3}(G-f) \geq 3$. Therefore in this case there is at most one foundational edge in $G$, namely, the spar of the kite in $H_{1}-xy$, as desired. 

Thus we may assume $T^3(H_2) = 2$. By Lemma \ref{K4e}, we have that $H_2$ contains two edge-disjoint kites that share at most one vertex. If $H_{2}^{z}$ contains a kite, then $G-f$ contains a kite, and so $f$ is not foundational.  Thus we may assume that $H_2^z$ does not contain a kite. By Lemma \ref{K4e}, we have that $H_2 = M$ and moreover that $z$ is the vertex of degree $4$ in $M$. But for any split of $z$ into $z_1$ and $z_2$, we get that $T^3(H_2^z -z_{1}-z_{2}) = 2$. Thus if $f$ is not the spar in $H_1-xy$, then $G-f$ has a triangle packing of size three, contradicting that $f$ is foundational. \\
\textbf{Case 2: $H_{2} = K_{4}$.} \\
By possibly relabelling, let $z_1$ be the vertex of degree two in $H_2^z$ resulting from the split of $z$. Notice that splitting $K_{4}$ leaves a kite subgraph, and hence as $f$ is foundational, $f$ is in $E(H_{2})$. Furthermore, $f$ is not incident with $z_1$ or $z_2$, as otherwise $T^3(G-f) = T^3(G) =3$.

Note that if $T^{3}(H_{1}) =2$, then by Lemma \ref{K4e} $H_{1}$ contains two edge-disjoint copies of kites, and thus $H_{1}-xy$ contains at least one kite, contradicting that $f$ is foundational. Hence $T^{3}(H_{1}) =3$.

We claim that $T^{3}(H_{1}-xy) =2$. To see this, note that if $T^{3}(H_{1}-xy) = 3$,
then $T^3(G) \geq T^3(H_1-xy) + T^3(H_2-z) = 3+1$, contradicting the lemma hypothesis. Thus every triangle packing of $H_{1}$ uses $xy$, and hence there exists a triangle packing of $H_{1}-xy$ which does not use $x$ or $y$. Therefore if $f$ is not the spar in the kite contained in $H_z^2$, $G-f$ contains three vertex-disjoint triangles, a contradiction.\\
\textbf{Case 3: Neither $H_{1}$ nor $H_{2}$ is $K_{4}$.} \\
Note that either $T^{3}(H_{1}) = 2$ or $T^{3}(H_{2}) =2$, as otherwise $T^{3}(G)\geq 4$ by Proposition \ref{cliqueboundinequality}. 

First suppose both $T^{3}(H_{1})=2$ and $T^{3}(H_{2})=2$. Then by Lemma \ref{K4e}, in each of $H_{1}$ and $H_{2}$ there are two edge-disjoint kites which share at most one vertex. Hence there is a kite in $H_{1}-xy$. If $f \in E(H_{2})$, then $G -f$ thus contains a kite, contradicting that $f$ is foundational. Therefore $f \in E(H_{1})$. If $H_{2}^{z}$ contains a kite, then $G-f$ contains a kite, and thus in this case $G$ contains no foundational edge. It follows that the kites in $H_2$ were not vertex-disjoint, and hence by Lemma \ref{K4e} we have that $H_{2} = M$ and $z$ is the unique vertex of degree $4$ in $M$. Thus $T^3(H_2-z)= T^{3}(H_{2}^{z} - \{z_{1},z_{2}\}) =2$. Since $H_1-xy$ contains a kite, if $f$ is not the spar of the kite in $H_1-xy$ then $T^3(G-f) = 3$, contradicting that $f$ is foundational.

Now suppose that $T^{3}(H_{1}) =3$. Since $T^3(G) = 3$, Proposition \ref{cliqueboundinequality} implies that $T^{3}(H_{2}) =2$. Then by Lemma \ref{K4e}, there are two edge-disjoint kites in $H_{2}$ which share at most one vertex. If there is no kite in $H_{2}^{z}$, then $H_2 = M$ and $z$ is the unique vertex of degree $4$ in $M$. But then $T^{3}(H_{2}^{z}-z_1-z_2) =2$, which implies that $T^{3}(G) \geq 4$, a contradiction. Thus there is a kite in $H_{2}^{z}$. If both of the edge-disjoint kites in $H_2$ are in $H_{2}^{z}$, then no matter the choice of $f$, we have that $G-f$ contains a kite and hence $G$ has no foundational edge. Therefore $H_{2}^{z}$ contains exactly one kite, and this kite avoids both $z_1$ and $z_2$.  If $f$ does not lie in this kite, then $G-f$ contains a kite. If $f$ is not the spar, then $T^3(H_2^z-f-z_1-z_2) \geq 1$ and so $T^3(G-f) \geq 3$, contradicting that $f$ is foundational. Thus there is at most one foundational edge, and it is the spar of a kite.

Finally suppose that $T^3(H_{2}) =3$. Thus $T^3(H_2^z -z_1-z_2) \geq 2$. Since $T^3(G) = 3$, Proposition \ref{cliqueboundinequality} implies that $T^{3}(H_{1})=2$. Thus by Lemma \ref{K4e}, $H_{1}$ contains two edge-disjoint kites which share at most one vertex. Thus $H_{1}-xy$ contains a kite. If $f$ does not lie in this kite, then $f$ is not foundational.  Moreover, if $f$ is not the spar of this kite, then $T^{3}(G-f) \geq 3$ by using two triangles in $H_{2}-z$ and a triangle from $H_1-xy-f$. Hence there is at most one foundational edge, and it is the spar of a kite.
\end{proof}

\begin{lemma}
\label{4Oresplit3triangle}
Let $G$ be $4$-Ore with $T^3(G) =3$. Let $v \in V(G)$, and let $G^{v}$ be obtained from $G$ by splitting $v$ into two vertices $v_1$ and $v_2$ of positive degree with $N(v_1) \cup N(v_2) = N(v)$ and $N(v_1) \cap N(v_2) = \emptyset$. Then one of the following occurs:
\begin{enumerate}[(i)]
\item{$T^{3}(G^{v}) \geq 3$,}
\item{$G^{v}$ contains a kite,}
\item{there is an $i \in \{1,2\}$ such that $\deg(v_{i}) =1$, and the edge incident to $v_{i}$ is foundational in $G$.}
\end{enumerate}
\end{lemma}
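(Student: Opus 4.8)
The plan is to mirror the structure of the proof of Lemma \ref{splittinglemma}, but carrying along the extra conclusions needed here (a leftover kite, or a foundational spar) instead of only a pair of disjoint triangles. I would induct on $v(G)$. The base case $G = M$ fails since $T^3(M) = 2$, so in fact the smallest $4$-Ore graph with $T^3 = 3$ is an Ore composition; still, it is worth checking small cases by hand to be safe. In general write $G$ as the Ore composition of $H_1$ (edge side, edge $xy$) and $H_2$ (split side, vertex $z$ split into $z_1, z_2$), and split into cases according to which of $H_1, H_2$ is $K_4$. As in the earlier lemmas, note that at most one of $z_1, z_2$ lies in any triangle, that $f(H_1,H_2) = -2$ forces $T^3(G) \geq 4$ unless one of the $H_i$ has $T^3 \leq 2$, and recall from Corollary \ref{onecliquecharacterization} that $T^3(H_i) = 1$ iff $H_i = K_4$.

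Next, for each case I would subdivide according to where the split vertex $v$ lives: $v \in \{x,y\}$ (equivalently $\{z_1,z_2\}$ on the split side), $v$ in the interior of $H_1$, or $v$ in the interior of $H_2$. When $v \in \{x,y\}$, deleting $v$ from $H_1$ leaves a triangle (Observation \ref{deletingavertex}) and the split side contributes two more disjoint triangles whenever $T^3(H_2^z - z_1 - z_2) \geq 2$, giving (i); the delicate subcase is when $T^3(H_2^z - z_1-z_2) \leq 1$, which by Lemma \ref{deletingatriangle}-type reasoning and Lemma \ref{K4e} constrains $H_2$ to $M$ or to having a controlled kite structure, and there I would chase down either a leftover kite (ii) or show the relevant edge is the spar of a kite and foundational (iii), invoking Lemma \ref{foundationaledgesin4Ore} for uniqueness/spar-ness. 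When $v$ is interior to $H_1$ (resp. $H_2$), I would apply the induction hypothesis to $H_1^v$ (resp. $H_2^v$, $(H_2^z)^v$): if outcome (i) or (ii) holds there, it lifts to $G^v$ using that the other side always supplies at least one disjoint triangle or a kite; if outcome (iii) holds there, the foundational edge/spar either survives into $G^v$ (done) or it is destroyed because it becomes part of the triangle $\{x,y,v\}$ created by the composition, and in that last situation I would use Observation \ref{deletingaclique} to extract a triangle from $H_i - \{x,y,v\}$ together with a triangle from the other side to reach (i). Throughout, I'd use Lemma \ref{K4e} to replace "$H_2 = M$" exceptions by "$H_2$ has two vertex-disjoint kites" and then argue those kites plus the kite $H_1 - xy$ force $T^3 \geq 3$ or leave a kite.

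The main obstacle, as in Lemma \ref{splittinglemma}, is the bookkeeping in the subcases where the induction gives outcome (iii): I must verify that when the foundational edge of $H_i$ is absorbed into the new triangle $\{x,y,v\}$, the resulting graph $G^v$ genuinely has either three disjoint triangles or a kite, and this requires carefully tracking which vertex of the spar got degree one, whether $z$ (or $x,y$) lies on that spar's kite, and whether the $3$-clique packings of $H_1, H_2$ are forced to use $xy$ or $z$. A secondary subtlety is that "foundational in $G$" is a global condition, so after producing a candidate degree-one edge $e$ with $e$ the spar of a kite, I still owe a check that $T^3(G - e) = 2$ and $G - e$ has no kite — but here I can lean on Lemma \ref{foundationaledgesin4Ore}, which already says a foundational edge exists essentially uniquely and is always a spar, so the argument reduces to matching the edge produced by the split with the (at most one) foundational edge of $G$. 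I expect the proof to be long but entirely parallel in spirit to Lemmas \ref{splittinglemma} and \ref{deletingatriangle}, with no genuinely new idea required beyond patient case analysis.
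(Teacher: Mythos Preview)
Your overall plan---decompose $G$ as an Ore composition of $H_1$ and $H_2$ and case on where $v$ lives---matches the paper's, but two points deserve attention.

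First, the paper's proof, despite being framed as a minimum-counterexample argument, never actually invokes induction on the pieces. Instead it organises the cases by the values of $T^3(H_1)$ and $T^3(H_2)$ (namely $H_i=K_4$, $T^3(H_i)=2$, $T^3(H_i)=3$) and uses Lemma~\ref{K4e} directly to locate a kite $L$ in the relevant side; then $v$ must lie in $L$ (else (ii) holds), and one checks by hand which splits of a vertex in $L$ destroy all triangles of $L$. Your plan to ``apply the induction hypothesis to $H_1^v$'' only makes sense when $T^3(H_1)=3$; when $T^3(H_1)=2$ you would need Lemma~\ref{splittinglemma} instead, whose outcome (ii) says only that the edge is a spar of a kite in $H_1$, not that it is foundational there. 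You do not separate these cases, and the bookkeeping for lifting Lemma~\ref{splittinglemma}'s conclusion to $G$ is exactly what the paper does directly, so the inductive detour buys nothing.

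Second, your proposed handling of (iii) is backwards. You cannot ``lean on Lemma~\ref{foundationaledgesin4Ore}'' to identify the edge with the foundational edge of $G$: that lemma says there is \emph{at most one} foundational edge, not that one exists, so there is nothing to match against. The paper's move is the opposite and much cleaner: once the split forces $\deg(v_1)=1$ with $v_1$ incident to an edge $e$ that is a spar, observe that $T^3(G^v)=T^3(G-e)$ (the pendant $v_1$ lies in no triangle) and that any kite in $G-e$ is a kite in $G^v$. Hence if (i) fails then $T^3(G-e)\le 2$, and if (ii) fails then $G-e$ has no kite; together with a lower bound $T^3(G-e)\ge 2$ (coming from two disjoint triangles on the other side of the composition) this shows $e$ is foundational in $G$ \emph{by definition}. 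That is the argument you should give, and Lemma~\ref{foundationaledgesin4Ore} plays no role in it.
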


\begin{proof}
Let $G$ be a $4$-Ore graph with $T^{3}(G) = 3$, and $v \in V(G)$. As $G \neq K_{4}$, we have that $G$ is the Ore composition of two $4$-Ore graphs $H_{1}$ and $H_{2}$. Up to relabelling, let $H_{1}$ be the edge side of the composition where we delete the edge $xy$ and $H_{2}$ the split side of the composition where we split $z$ into two vertices $z_{1}$ and $z_{2}$, and identify $z_{1}$ with $x$ and $z_{2}$ with $y$. Note that at least one of $H_{1}$ or $H_{2}$ is not $K_{4}$, as otherwise $G = M$ and $T^{3}(M) =2$, contradicting the hypotheses of the lemma. \\
\textbf{Case 1: $H_{1} =K_{4}$.}\\
Observe that $H_{1}-xy$ is a kite, so if $v  \not \in V(H_{1})$, then $G^{v}$ contains a kite and so (ii) holds. Hence we may assume that $v \in V(H_{1})$. Since $T^3(G) = 3$ and $T^3(G) \geq T^3(H_2-z)+1$ we have that $T^3(H_2) \in \{2,3\}$.

First suppose that $T^{3}(H_{2}) =3$. Observe that there are two vertex-disjoint triangles in $H_{2}-z$. If $(H_{1}-xy)^v$ contains a triangle, then  $T^{3}(G^{v}) \geq 3$ and (ii) holds. Moreover, if $v \in \{x,y\}$,  then $H_{1}-xy$ contains a triangle, so we may assume that $v \in V(H_{1}) - \{x,y\}$. Let $w$ and $v$ be the two vertices in $V(H_{1}) - \{x,y\}$. Observe there is exactly one way to split $v$ into $v_{1}$ and $v_{2}$ so that there is no triangle left over in $H_{1}- xy)^v$: up to relabelling $v_{1}$ to $v_{2}$, we have that $v_{1}$ is adjacent to $w$, and $v_{2}$ is adjacent to $x$ and $y$. To finish, notice that the number of vertex-disjoint triangles in $G^{v}$ after performing such a split is the same as the number of vertex-disjoint triangles in $G-vw$. Hence if $T^{3}(G-vw) \geq 3$, we have $T^{3}(G^{v}) = 3$  and thus (i) holds. So $T^{3}(G-vw) = 2$, and further $G^{v}$ has no kite subgraph, which implies that $G-vw$ does not contain a kite. Thus $vw$ is foundational, and so (iii) holds, as desired.

Therefore we may assume that $T^{3}(H_{2}) = 2$. If there is a kite in $H_{2}^{z} -z_{1}-z_{2}$, then $G$ contains two vertex-disjoint kites, and thus there is a kite in $G^{v}$. By Lemma \ref{K4e}, we have that $G = M$ and $z$ is the unique vertex of degree $4$ in $M$. Then $T^{3}(H_{2}^{z}-z_{1}-z_{2}) =2$. Therefore we may assume that $v \not \in \{x,y\}$ as otherwise $T^{3}(G^{v}) \geq 3$ and (i) holds. Let $w,v$ be the two vertices in $H_{1}-x-y$. By the same argument as in the $T^{3}(H_{2})=3$ case, there is exactly one split so that $T^{3}(G^{v}) \leq 2$, and in this case, we split $v$ into two vertices $v_{1},v_{2}$ where without loss of generality, $\deg(v_{1}) = 1$, and $v_{1}$ is incident to a foundational edge in $G$. In this case, (iii) holds, as desired. \\
\textbf{Case 2: $H_{2} =K_{4}$.} \\
Since $T^3(G) = 3$ and $T^3(H_2-z) = 1$, it follows that $T^3(H_1) \in \{2,3\}$. Throughout this case, without loss of generality let $z_{1}$ have degree two in $H_{2}^{z}$ and $z_{2}$ have degree one in $H_{2}^{z}$. Observe that $H_{2}^{z}-z_{2}$ contains a kite, so if $v \not \in V(H_{2}^{z})-z_{2}$, then $G^{v}$ contains a kite and (ii) holds. Thus we may assume that $v \in V(H_2^{z}-z_2)$, as otherwise we are done.

First suppose that $T^{3}(H_{1}) = 3$. Then $T^{3}(H_{1}-x) \geq 2$, and it follows that if $v =z_{1}$, then $T^{3}(G^{v}) =3$ and (i) holds, as desired. Therefore we may assume that $v \in V(H_{2}) - \{z_{1},z_{2}\}$. If $v$ is adjacent to $z_{2}$, then there is a triangle in $H_{2}^{z}$ after splitting $v$. Since $T^3(H_1-x) \geq 2$, we get $T^{3}(G^{v}) \geq 3$ and (i) holds. If $v$ is either of the other two possible vertices, the only split which does not leave a triangle is one where up to relabelling $v_{1}$ has degree one and is incident to the spar of the kite in $H_{2}^{z}$. Let $f$ be this spar. If $T^{3}(G-f) \geq 3$, we have that $T^{3}(G^{v}) \geq 3$ and (i) holds. So $T^{3}(G-f) =2$. Moreover, $G-f$ does not contain a kite as otherwise $G^{v}$ contains a kite (satisfying (ii)). Hence $f$ is foundational in $G$, and thus $v_{1}$ is incident to the foundational edge in $G$. Thus (iii) holds, as desired.

Therefore we may assume that $T^{3}(H_{1}) = 2$. Then by Lemma \ref{K4e}, either $H_{1} =M$ or $G$ contains two vertex-disjoint kites. If $H_{1}-x-y$ contains a kite, then there are two vertex-disjoint kites in $G$, and hence $G^{v}$ contains a kite, satisfying (ii). Thus we may assume that $H_{1}=M$, and so $T^{3}(H_{1}-xy) =2$. If $v = z_{1}$, observe we have $T^{3}(G^{v}) \geq 3$ (thus (i) holds), and if we split $z_{2}$, we have a kite in $G^{v}$ (and so (ii) holds).  If $v$ is adjacent to $z_{2}$ in $H_{2}^{z}$, then observe that any split of $v$ results in a triangle in $(H_2^z)^v$, and hence $T^{3}(G^{v}) \geq 3$ and (i) holds. Thus we may assume that $v$ is a vertex in $H_{2}^{z}$ incident to a spar of the kite. By the same arguments as the case when $T^{3}(H_{1}) =3$, the only split of such a vertex that does not leave a triangle results in, up to relabelling, $v_{1}$ having degree one and being incident to a foundational edge in $G$. But then (iii) holds, as desired. \\
\textbf{Case 3: $T^{3}(H_{1}) =2$.} \\
By the previous cases, we may assume that $H_{2} \neq K_{4}$. Thus $T^{3}(H_{2}) \geq 2$. Recall that since $T^3(G) = 3$, by Proposition \ref{cliqueboundinequality} it follows that $T^3(H_2) \in \{2,3\}$. Moreover, by Lemma \ref{K4e} either $H_{1} = M$ or $H_{1}$ contains two vertex-disjoint kites. In either case, there is a kite subgraph in $H_{1}-xy$. Let $L$ be such a subgraph. Then we may assume that $v \in V(L)$, as otherwise $G^{v}$ contains a kite subgraph, satisfying (ii). 

First suppose that $T^{3}(H_{2}) = 3$. If we split $v$ and there is still a triangle left in $H_{1}^{v}-xy$, then as $T^{3}(H_{2}-z) \geq 2$, we have $T^{3}(G^{v}) \geq 3$. Hence (i) holds. Therefore we may assume that there is no triangle in $H_{1}^{v}-xy$, as otherwise we are done. By the same arguments as in previous cases, this implies that $v$ is incident to the spar of $L$, and we split $v$ in such a way that up to relabelling $v_{1}$ is incident to the spar of the $L$ and $v_1$ has degree one in $G^{v}$. Further, the spar of $L$ is foundational, satisfying (iii); otherwise, the split of $v$ satisfies at least one of (i) and (ii). 

Thus we may assume that $T^{3}(H_{2}) = 2$. First suppose that $T^{3}(H_{2}-z) =2$. Then if we split $v$ in $L$ and are left with a triangle, $T^{3}(G^{v}) \geq 3$ and (i) holds. Thus by the same arguments as in previous cases, $v$ is incident to the spar of $L$, and we split $v$ in such a way that up to relabelling $v_{1}$ is incident to the spar of $L$ and has degree one in $G^{v}$. Further, the spar of $L$ is foundational, satisfying (iii); otherwise, the split of $v$ satisfies at least one of (i) and (ii). Thus $T^{3}(H_{2}-z) =1$. By Lemma \ref{K4e}, since $T^3(H_2) = 2$ either $H_{2} = M$ or $H_{2}$ has two vertex-disjoint kites. As $T^{3}(H_{2}-z) =1$, this implies that $z$ is incident to a spar of a kite. But then regardless of whether $H_{2} = M$ or $H_2$ has two vertex-disjoint kites, we have that there is a kite in $H_{2}^{z}$ that avoids both $z_1$ and $z_2$. But then $G^{v}$ contains a kite, satisfying (ii). \\
\textbf{Case 4: $T(H_{2}) =2$.} \\
Recall that by Proposition \ref{cliqueboundinequality} we may assume that $T^3(H_1) \leq 3$; and from the previous cases, we may assume further that $T^{3}(H_{1}) = 3$. Then $T^{3}(H_{1}-xy) \geq 2$, with equality only if every triangle packing of $H_1$ contains a triangle that uses the edge $xy$. Note by Proposition \ref{cliqueboundinequality}, if $T^{3}(H_{1}-xy) \geq 3$, then $T^{3}(G) \geq 4$, a contradiction. Hence there are two disjoint triangles in $H_{1}-xy$ which do not use $x$ or $y$. Therefore $T^{3}(H_{2}^{z}) = 1$, as otherwise by Proposition \ref{cliqueboundinequality}, $T^{3}(G) =4$. By appealing to Lemma \ref{K4e}, this implies that there is a kite $L$ in $H_{2}^z$ that avoids both $z_1$ and $z_2$. If $v \not \in V(L)$, then $G^{v}$ contains a kite, satisfying (ii). If splitting $v$ leaves a triangle, then $T^{3}(G^{v}) \geq 3$ and so (i) holds. Let $w,v$ be the two vertices of degree $3$ in $L$. It follows that up to relabelling, after splitting we have $v_{1}w \in E(G^{v})$ and $v_{2}$ is incident to the other two edges of $v$. Thus $\deg(v_{1}) =1$. Note that if $vw$ is not foundational, then this split satisfies one of (i) and (ii) and we are done. Hence $vw$ is foundational, and thus (iii) holds. \\
\textbf{Case 5: Both $T^{3}(H_{1}) =3$ and $T^{3}(H_{2}) =3$.} \\
Then by Proposition \ref{cliqueboundinequality}, $T^{3}(G) \geq 3 + 3 -2 =4$, contradicting the hypotheses of the lemma. 
\end{proof}

\section{Properties of graphs in $\mathcal{B}$}
\label{T8structure}
In this section we prove lemmas similar to those in Section \ref{cliquesection}, except now we focus on graphs in $\mathcal{B}$. We recall the definition of $\mathcal{B}$: the graph $T_{8}$ (shown in Figure \ref{T8pic2}) is in $\mathcal{B}$, and given a graph $G \in \mathcal{B}$ and a $4$-Ore graph $H$, the Ore composition $G'$ of $G$ and $H$ is in $\mathcal{B}$ if $T^{3}(G') =2$. We start by proving that the potential of graphs in $\mathcal{B}$ is in fact $-1$.

\begin{definition}
The \textit{Kostochka-Yancey potential} of a graph $G$, denoted $\text{KY}(G)$, is defined as $\text{KY}(G) := 5v(G) -3e(G)$.
\end{definition}
The following observation is immediate from the definition of Ore composition. 

\begin{obs}
\label{Orecomposition}
Let $G$ be the Ore composition of two graphs $H_{1}$ and $H_{2}$. Then $v(G) = v(H_{1}) +v(H_{2}) -1$, $e(G) = e(H_{1}) + e(H_{2}) -1$, and $\text{KY}(G) = \text{KY}(H_{1}) + \text{KY}(H_{2}) -2$. 
\end{obs}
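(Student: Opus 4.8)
The plan is to unwind the definition of Ore composition and track how each of the three quantities changes step by step. Recall that $G$ is built from $H_1$ and $H_2$ by (a) deleting an edge $xy$ from $H_1$, (b) splitting a vertex $z$ of $H_2$ into two vertices $z_1,z_2$ of positive degree with $N(z_1)\cup N(z_2) = N(z)$ and $N(z_1)\cap N(z_2)=\emptyset$, obtaining $H_2^z$, and (c) identifying $x$ with $z_1$ and $y$ with $z_2$.

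I would first count vertices. Step (a) does not change the vertex set; step (b) replaces the single vertex $z$ by the two vertices $z_1,z_2$ and so increases the number of vertices of $H_2$ by exactly one; and step (c) consists of two identifications of pairs of distinct vertices, each of which decreases the total by one. Hence $v(G) = v(H_1) + (v(H_2)+1) - 2 = v(H_1)+v(H_2)-1$.

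Next I would count edges. Step (a) deletes exactly one edge. The crucial point is that in step (b) the hypotheses $N(z_1)\cup N(z_2)=N(z)$ and $N(z_1)\cap N(z_2)=\emptyset$ guarantee that each edge of $H_2$ incident to $z$ is inherited by exactly one of $z_1,z_2$, so no edge is created or destroyed and $e(H_2^z)=e(H_2)$. Finally, step (c) creates no parallel edges (since $G$ is simple, being $4$-critical, and $x$ is non-adjacent to $y$ in $H_1-xy$ while $z_1$ is non-adjacent to $z_2$ in $H_2^z$) and destroys none, so $e(G) = (e(H_1)-1)+e(H_2) = e(H_1)+e(H_2)-1$.

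The identity for $\text{KY}$ is then immediate: substituting the two formulas above into $\text{KY}(G) = 5v(G)-3e(G)$ yields $5(v(H_1)+v(H_2)-1) - 3(e(H_1)+e(H_2)-1) = \text{KY}(H_1)+\text{KY}(H_2) - 5 + 3 = \text{KY}(H_1)+\text{KY}(H_2)-2$. There is no real obstacle in this argument; the only place where the hypotheses of the splitting operation are genuinely used — and hence the only point needing a sentence of care — is the verification in step (b) that splitting a vertex preserves the number of edges.
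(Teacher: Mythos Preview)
Your proof is correct and is exactly the kind of direct verification the paper has in mind; the paper simply declares the observation ``immediate from the definition of Ore composition'' and gives no explicit argument. One small remark: the parenthetical about $G$ being simple because it is $4$-critical is unnecessary, since before identification the two pieces $H_1-xy$ and $H_2^z$ live on disjoint vertex sets, so identifying $x$ with $z_1$ and $y$ with $z_2$ cannot create parallel edges or loops regardless of criticality.
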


\begin{cor}\label{notinb}
If $G \in \mathcal{B}$, then $\text{KY}(G) =1$ and $p(G) =-1$. 
\end{cor}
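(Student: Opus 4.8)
The plan is to prove this corollary by induction on $v(G)$, using Observation~\ref{Orecomposition} together with the defining recursive structure of $\mathcal{B}$. First I would establish the base case: the graph $T_8$ has $v(T_8) = 8$ and $e(T_8) = 13$, so a direct computation gives $\text{KY}(T_8) = 5 \cdot 8 - 3 \cdot 13 = 40 - 39 = 1$. Since $T^3(T_8) = 2$ (one checks the triangle $u_1 u_2 u_3$ — wait, $u_1 u_2$, $u_1 u_3$, $u_2 u_3$ are all edges, so $\{u_1,u_2,u_3\}$ is a triangle; and $\{u_6,u_7,u_8\}$ is another vertex-disjoint triangle, and no three vertex-disjoint triangles fit in $8$ vertices with this edge set), we get $p(T_8) = 5 v(T_8) - 3 e(T_8) - T^3(T_8) = 1 - 2 = -1$. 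This settles the base case.

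For the inductive step, suppose $G \in \mathcal{B}$ is obtained as an Ore composition of some $G' \in \mathcal{B}$ and a $4$-Ore graph $H$, with $T^3(G) = 2$ (this last condition being part of the definition of membership in $\mathcal{B}$). By the induction hypothesis, $\text{KY}(G') = 1$. For the $4$-Ore graph $H$, I need $\text{KY}(H) = 1$: this is exactly the Kostochka--Yancey tightness statement, since $4$-Ore graphs are precisely the $4$-critical graphs achieving equality in Theorem~\ref{KY}, and for $k = 4$ the bound reads $e(H) = \frac{5 \cdot 3 \cdot v(H) - 4 \cdot 1}{2 \cdot 3} = \frac{15 v(H) - 4}{6}$, i.e. $6 e(H) = 15 v(H) - 4$, equivalently $15 v(H) - 6 e(H) = 4$, i.e. $3(5 v(H) - 3e(H)) - 3 e(H) = \ldots$ — more cleanly, $\text{KY}(H) = 5v(H) - 3e(H) = \frac{1}{2}(15 v(H) - 6 e(H)) \cdot \frac{1}{?}$; rather than belabor the arithmetic, one notes $K_4$ has $\text{KY}(K_4) = 20 - 18 = 2$, and by Observation~\ref{Orecomposition} each Ore composition changes $\text{KY}$ by $-2$ and changes the count of constituent $K_4$'s; an easy induction on $v(H)$ gives $\text{KY}(H) = 2$ for every $4$-Ore graph $H$. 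Then Observation~\ref{Orecomposition} yields $\text{KY}(G) = \text{KY}(G') + \text{KY}(H) - 2 = 1 + 2 - 2 = 1$. Finally, $p(G) = \text{KY}(G) - T^3(G) = 1 - 2 = -1$, using that $T^3(G) = 2$ by the definition of $\mathcal{B}$.

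The main thing to get right is the bookkeeping for $\text{KY}$ of $4$-Ore graphs: one must verify $\text{KY}(K_4) = 2$ and that $\text{KY}$ is additive (minus $2$) under Ore composition, so that by induction every $4$-Ore graph has $\text{KY} = 2$. This is genuinely routine given Observation~\ref{Orecomposition}, but it is the one external input the corollary needs beyond the recursive definition of $\mathcal{B}$. Everything else is a two-line arithmetic combination. There is no real obstacle here — the only subtlety is remembering that the condition $T^3(G) = 2$ is baked into the definition of $\mathcal{B}$, which is what lets us pass immediately from $\text{KY}(G) = 1$ to $p(G) = -1$ without any further structural analysis of triangle packings.
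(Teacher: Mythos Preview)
Your proposal is correct and follows essentially the same approach as the paper: induction on $v(G)$ via the recursive definition of $\mathcal{B}$, using Observation~\ref{Orecomposition} to propagate $\text{KY}$ through Ore compositions, with base case $T_8$. The only cosmetic difference is that the paper obtains $\text{KY}(H)=2$ for $4$-Ore $H$ by appeal to Theorem~\ref{KY}, whereas you sketch the equivalent one-line induction from $\text{KY}(K_4)=2$; both are fine.
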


\begin{proof}
Let $H$ be a vertex-minimum counterexample. If $H=T_{8}$, then $v(T_{8}) = 8$ and $e(T_{8}) = 13$, and thus $5 \cdot 8- 13 \cdot 3 =1$. Now suppose $H$ is the Ore composition of two graphs $H_{1}$ and $H_{2}$, where without loss of generality, $H_{1} \in \mathcal{B}$ and $H_{2}$ is $4$-Ore. By minimality, it follows that $\text{KY}(H_1) = 1$. Note that by Theorem \ref{KostochkaYancey4}, $\text{KY}(H_2) = 2$. Then by Observation \ref{Orecomposition}, it follows that $\text{KY}(H) = 1 + 2 -2=1$.  

Since $\text{KY}(G) = 1$ and by definition $T^{3}(G) = 2$, it follows that $p(G) =-1$, as desired.
\end{proof}

We overload the terminology.
\begin{definition}
 Given a graph $G \in \mathcal{B}$, an edge $e \in E(G)$ is \textit{foundational} if $T^{3}(G-e) =1$ and there is no $K_{4}-e$ subgraph in $G-e$. 
\end{definition}

Note that this version of \emph{foundational} differs from Definition \ref{founddef} in that we enforce that $G-e$ contains no $K_{4}-e$ subgraph at all: such a subgraph may not be a kite.

\begin{lemma}
\label{foundationaledgesinB}
If $G$ is a graph in $\mathcal{B}$, then $G$ contains at most one foundational edge. Further, if $G$ is not $T_{8}$ and $G$ contains a foundational edge, then this edge is the spar of a kite.
\end{lemma}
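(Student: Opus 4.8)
The plan is to mimic the structure of the proof of Lemma~\ref{foundationaledgesin4Ore} (the analogous statement for $4$-Ore graphs), arguing by induction on $v(G)$ via the recursive definition of $\mathcal{B}$. So I would take a vertex-minimum counterexample $G$. If $G = T_{8}$, I would simply check by hand (using the explicit edge set given in the introduction) that $T_{8}$ has at most one foundational edge — in fact I expect $T_{8}$ itself may be the lone base case where the ``spar of a kite'' conclusion fails, which is exactly why the lemma statement excludes $G = T_8$ from that clause. Otherwise, $G$ is the Ore composition of a graph $H_{1} \in \mathcal{B}$ and a $4$-Ore graph $H_{2}$, where we delete edge $xy$ from $H_{1}$ and split a vertex $z$ of $H_{2}$ into $z_{1}, z_{2}$, identifying $z_1$ with $x$ and $z_2$ with $y$; and $T^{3}(G) = 2$ by the definition of $\mathcal{B}$.

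The core of the argument is a case analysis on the structure of $H_2$ and on where a putative foundational edge $f$ can lie. First, since splitting any vertex of $K_4$ leaves a kite, if $H_2 = K_4$ then $H_2^z$ contains a kite avoiding $z_1, z_2$, so $G - f$ contains a kite for every $f \in E(H_1)$; hence a foundational $f$ would have to lie in $E(H_2^z)$, but $f$ incident to $z_1$ or $z_2$ gives $T^3(G-f) = T^3(G) = 2$, a contradiction, so we'd need $f$ in the spar of the kite $H_2^z - z_2$, and counting triangles (using $T^3(H_1) = 1$, which forces $H_1$ to be... wait, $H_1 \in \mathcal{B}$ so $T^3(H_1) = 2$) shows $T^3(G-f) \geq 2$ still. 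The real cases are $H_2 \neq K_4$, so $T^3(H_2) = 2$ (it can't be $\geq 3$ or Proposition~\ref{cliqueboundinequality} forces $T^3(G) \geq 3$). By Lemma~\ref{K4e}, either $H_2 = M$ or $H_2$ has two vertex-disjoint kites; in the latter case $H_2^z$ contains a kite avoiding $z_1, z_2$, so again $G-f$ contains a kite for all $f \in E(H_1)$, and any foundational edge must live in $E(H_2^z)$ — here I'd argue as in Case~3 of Lemma~\ref{foundationaledgesin4Ore} that only the spar of the surviving kite can be foundational, and that $G$ has at most one such. When $H_2 = M$ with $z$ the degree-four vertex, $H_2^z - z_1 - z_2$ still contains two vertex-disjoint triangles, so a non-spar edge $f$ of the kite in $H_1 - xy$ (which exists since $H_1 \in \mathcal{B}$ has such a kite — this needs justification, possibly a separate lemma about kites in $\mathcal{B}$, analogous to Lemma~\ref{K4e}) would give $T^3(G-f) \geq 3 > 2$, while $f$ outside that kite leaves a kite; so again at most one foundational edge, being a spar. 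The remaining $H_2 = M$ subcases ($z$ incident to a spar, or $z$ one of the two other degree-three vertices) follow the same template.

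The uniqueness half — ``at most one foundational edge'' — should fall out of the case analysis: in every composed case I show a putative foundational edge $f$ is forced to lie in a specific kite (either in $H_1 - xy$ or in $H_2^z$) and in fact to be that kite's spar, and there is only one such edge. For the base case $T_8$, uniqueness is a direct finite check.

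The main obstacle I anticipate is that this proof presupposes a structural description of kites (and $K_4 - e$ subgraphs) inside graphs of $\mathcal{B}$ — the analogue of Lemma~\ref{K4e} — which has not yet appeared in the excerpt. In particular I need to know that every $G \in \mathcal{B}$ other than $T_8$ contains a useful kite, where it sits relative to the ``special'' structure of $T_8$ (the two triangles $u_6 u_7 u_8$ and $u_1 u_2 u_3$-ish region), and how $K_4 - e$ subgraphs that are not kites can arise after Ore composition — the remark immediately preceding the lemma flags exactly this subtlety. So the honest plan is: first establish (or cite from Section~\ref{T8structure}) the kite-structure lemma for $\mathcal{B}$, identify where the $3$-clique packing of size two sits, and only then run the $H_2 \in \{K_4, M, \text{two-disjoint-kites}\}$ case split above; the bookkeeping of which edges leave a kite versus which raise the triangle count to three is routine once that scaffolding is in place.
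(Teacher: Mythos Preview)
Your proposal has the right overall shape (induction on $v(G)$, base case $T_8$, Ore-composition case split), but it contains two genuine gaps.

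First, you fix a single orientation of the Ore composition: you take the graph in $\mathcal{B}$ to be the edge side and the $4$-Ore graph to be the split side. The definition of $\mathcal{B}$ places no such restriction --- the $4$-Ore factor can be either the edge side or the split side --- so your case analysis covers only half of the possibilities.

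Second, and more seriously, the obstacle you flag at the end is real and fatal to your approach as written. You correctly observe that your argument needs a kite-structure lemma for graphs in $\mathcal{B}$ analogous to Lemma~\ref{K4e}, but no such lemma is proved in the paper, and indeed $T_8$ itself contains $K_4 - e$ subgraphs but no kite (the degree-three vertices of every $K_4-e$ in $T_8$ are $u_1$ and $u_2$, which have degree four). The paper sidesteps this entirely by reversing your labelling: it sets $H_1$ to be the $4$-Ore factor and $H_2 \in \mathcal{B}$, and then argues (treating both orientations of the composition) that the $4$-Ore piece always contributes a kite $L$ in $G$ --- either because $K_4$ minus an edge or $K_4$ split at a vertex is a kite, or via Lemma~\ref{K4e} when $T^3(H_1) = 2$. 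Any $f$ outside $L$ leaves that kite intact in $G-f$, and any $f$ in $L$ other than the spar leaves a triangle in $L-f$ which, together with a triangle from the $\mathcal{B}$ side (using only $T^3(H_2) = 2$), gives $T^3(G-f) \geq 2$. Thus the only candidate for a foundational edge is the spar of $L$, and no structural information about $\mathcal{B}$ beyond $T^3 = 2$ is ever invoked. Reorganising your argument around the $4$-Ore factor rather than the $\mathcal{B}$ factor removes the need for the missing lemma and handles both orientations uniformly.
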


\begin{proof}
First suppose that $G = T_{8}$. Observe that the edge $u_{1}u_{2}$ is the only foundational edge in $G$, and so in this case the lemma holds. Hence we may assume that $G$ is not $T_8$, and so that $G$ is the Ore composition of a $4$-Ore graph $H_{1}$  and a graph $H_{2}$ in $\mathcal{B}$. We may assume that $G$ has a foundational edge $f$, as otherwise there is nothing to show. Note that $T^{3}(H_{1}) \leq 2$, as otherwise $T^{3}(G) \geq 3$ by Proposition \ref{cliqueboundinequality}.\\
\textbf{Case 1: $H_{1} = K_{4}$.} \\
Suppose that $H_{1}$ is the edge side where we delete the edge $xy$, and we split a vertex $z$ in $H_{2}$ into two vertices $z_{1}$ and $z_{2}$. Note that if $f$ is not in $H_{1}-xy$, then $G-f$ contains a kite, and so $f$ is not foundational, a contradiction. Hence $f$ lies in $E(H_{1}-xy)$.
As $H_{1}-xy$ is a kite, if we delete any edge that is not the spar of this kite, we have $T^{3}(H_{1}-xy-f) \geq 1$. Further, there is at least one triangle in $H_{2}$ which does not use $z$, and hence $T^{3}(G-f) \geq 2$. Thus $G$ contains a single foundational edge, and $f$ is the spar of a kite, as desired.

Now suppose that $H_{1}$ is the split side where we split $z$ into $z_{1}$ and $z_{2}$. Then $H_{1}^{z}$ contains a kite. If $f$ does not lie in this kite, then $G-f$ contains a kite and so $f$ is not foundational, a contradiction. If $f$ is not the spar of the kite, then $T^{3}(H_{1}^{z}-f) \geq 1$, and since there is a triangle in $H_{2}-xy$, we see that $T^{3}(G-f) \geq 2$, and so again $f$ is not foundational. Hence $G$ contains a single foundational edge, and $f$ is the spar of a kite, as desired. \\
\textbf{Case 2: $T^{3}(H_{1}) =2$.}\\
By Lemma \ref{K4e}, either $H_{1} = M$ or $H_{1}$ contains two vertex-disjoint kites. First suppose that $H_{1}$ is the edge side of the composition, where we delete the edge $xy$. By Proposition \ref{cliqueboundinequality} and the fact that $T^{2}(G) = 2$, every triangle packing of $H_{1}$ contains a triangle using the edge $xy$. Thus regardless of whether $H_{1} = M$ or not, there is a kite $L$ in $H_{1}-xy$. Then $f$ is in $L$, otherwise $G-f$ contains a kite, contradicting the fact that $G$ is foundational. If $f$ is not the spar of the kite in $L$, then $T^{3}(H_{1}-f-xy) \geq 1$, and since  $T^3(H_2-z) \geq 1$, we have that $T^{3}(G-f) \geq 2$. Therefore $f$ is the spar of a kite, and it is the only foundational edge.

Therefore we may assume that $H_{1}$ is the split side of the composition where we split a vertex $z$ into two vertices $z_{1}$ and $z_{2}$. Again by Proposition \ref{cliqueboundinequality} and the fact that $T^{2}(G) =2$, every triangle packing of $H_{1}$ uses $z$. Thus regardless of whether $H_{1} = M$ or not, $z$ is incident to the spar of a kite in $H_{1}$, and so there is a kite $L$ in $H_{1}^{z}$ that avoids both $z_1$ and $z_2$. Then $f$ is in $L$, as otherwise $G-f$ contains a kite contradicting the fact that $f$ is foundational. If $f$ is not the spar of the kite in $L$, then $T^{3}(H_{1}-f-z) \geq 1$, and thus $T^{3}(G-f) \geq 2$, again contradicting that $f$ is foundational. Thus $f$ is the spar of a kite, and it is moreover the only foundational edge
\end{proof}

\begin{lemma}
\label{T8splits}
Let $G \in \mathcal{B}$, and let $G^{v}$ be obtained from $G$ by splitting a vertex $v$ into two vertices $v_{1}$ and $v_{2}$. Then at least one of the following occurs:
\begin{enumerate}[(i)]
\item{$T^{3}(G^{v}) \geq 2$,}
\item{$G^{v}$ contains a $K_{4}-e$ subgraph,}
\item{there is an $i \in \{1,2\}$ such that $\deg(v_{i}) = 1$ and the edge incident to $v_{i}$ is foundational in $G$.}
\end{enumerate}
\end{lemma}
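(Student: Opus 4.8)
The plan is to proceed by induction on $v(G)$, exactly mirroring the structure of the proof of Lemma \ref{splittinglemma} and Lemma \ref{4Oresplit3triangle}. The base case is $G = T_8$: here one checks directly, vertex by vertex (using the symmetries of $T_8$), that every split either leaves two vertex-disjoint triangles, or leaves a $K_4-e$ subgraph, or — only when we split so that some $v_i$ has degree one incident to the edge $u_1u_2$, which we already know from Lemma \ref{foundationaledgesinB} is the unique foundational edge of $T_8$ — satisfies (iii). For the inductive step, write $G$ as the Ore composition of a $4$-Ore graph $H_1$ (edge side, edge $xy$ deleted) and a graph $H_2 \in \mathcal{B}$ (split side, vertex $z$ split into $z_1,z_2$), or the other way around; note that by the definition of $\mathcal{B}$ and Proposition \ref{cliqueboundinequality} we have $T^3(H_1) \le 2$ and $T^3(G) = 2$, so every $3$-clique packing of the $4$-Ore side that achieves its maximum "interacts" with $xy$ (resp. $z$) in the constrained way already extracted in Lemma \ref{foundationaledgesinB}.

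The case analysis splits on which side $v$ lies in and on whether the relevant $4$-Ore piece is $K_4$ or not. If $v$ lies in the $4$-Ore side $H_1$: after performing the same split inside $H_1$, apply Lemma \ref{4Oresplit3triangle} if $T^3(H_1) = 3$ is impossible here, so instead use Lemma \ref{splittinglemma} (when $T^3(H_1) = 2$) or a direct argument (when $H_1 = K_4$, where $H_1 - xy$ is a kite and any split either preserves a triangle or isolates a degree-one vertex on the spar). In each subcase either the split of $H_1$ retains enough triangles/kites that, combined with a triangle in $H_2 - z$ (which exists by Observation \ref{deletingavertex}), we get $T^3(G^v) \ge 2$ or a $K_4-e$ in $G^v$; or the split isolates a degree-one vertex on the spar of a kite $K$ of $H_1$. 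In the latter case, if $K$ survives in $G$ we get (iii) via $f$ foundational (checking with Lemma \ref{foundationaledgesinB} that the spar really is foundational, using $T^3(G) = 2$), and if $K$ is destroyed because $xy \in E(K)$, then $\{x,y,v\}$ is a triangle in $G$ and Observation \ref{deletingaclique} produces another triangle in $H_1 - \{x,y,v\}$, which with a triangle of $H_2 - z$ gives $T^3(G^v) \ge 2$. If $v$ lies in the $\mathcal{B}$-side $H_2$: apply the induction hypothesis to $H_2$ and $G^v$ restricted to $H_2$; when outcome (i) or (ii) holds for $H_2^v$ we lift it to $G$ using the kite $H_1 - xy$; when outcome (iii) holds, the isolated degree-one vertex sits on an edge foundational in $H_2$, which by the definition of foundational in $\mathcal{B}$ is the spar of a kite $K$ of $H_2$ (when $H_2 \ne T_8$) — if $z \notin V(K)$ then the same split gives (iii) in $G$, and if $z \in V(K)$ we take a triangle $T \subseteq K$ through $z$ and $v$, use Observation \ref{deletingaclique} to find a triangle in $H_2 - T$, and combine with a triangle in $H_1 - xy$ (a kite, so it has a triangle not using $x$ or $y$) to get $T^3(G^v) \ge 2$.

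The main obstacle I anticipate is bookkeeping in the cross-terms: ensuring that, when we claim a foundational edge of a proper piece ($H_1$ or $H_2$) remains foundational in $G$, no new $K_4-e$ subgraph or extra vertex-disjoint triangle is created by the composition. This requires careful use of the fact that $T^3(G) = 2$ together with Lemma \ref{K4e} (two vertex-disjoint kites unless the piece is $M$) and Lemma \ref{foundationaledgesinB}, and possibly a short sub-argument handling the degenerate situation $H_2 = T_8$ separately where "foundational" need not mean "spar of a kite." A secondary nuisance is the case $H_1 = K_4$ combined with $H_2$ having a foundational edge incident to $z$: there one must verify that after the split the foundational edge of $H_2$ is unaffected, or else that we land in (i) or (ii). I expect these to be resolvable by the same bounded-case reasoning used throughout Section \ref{cliquesection}, with Observations \ref{deletingavertex} and \ref{deletingaclique} doing the heavy lifting for locating the "extra" triangle.
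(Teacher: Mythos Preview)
Your plan would work, but it is considerably more elaborate than the paper's argument, and the complications you anticipate are artifacts of the route you chose rather than intrinsic to the lemma.

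The paper's proof never recurses into the $\mathcal{B}$-side at all. Writing $G$ as the Ore composition of $H_1 \in \mathcal{B}$ and a $4$-Ore graph $H_2$ (with $T^3(H_2) \le 2$ forced by Proposition~\ref{cliqueboundinequality}), the key observation is that the $4$-Ore piece always contributes a single concrete kite $L$ to $G$: either $H_2 - xy$ itself (when $H_2 = K_4$ is the edge side), or a kite guaranteed by Lemma~\ref{K4e} inside $H_2 - xy$ or $H_2^{z}$ (and in the split-side case, chosen to avoid $z_1,z_2$, which is possible because every $3$-clique packing of $H_2$ must use $z$, forcing $z$ onto a spar). Once you have this kite $L$, the argument collapses: if $v \notin V(L)$ then $L$ survives in $G^{v}$ and (ii) holds immediately. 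In particular, this disposes of every $v$ lying in the $\mathcal{B}$-piece $H_1$ in one stroke, with no induction, no appeal to Lemma~\ref{splittinglemma}, and no need to track foundational edges across the composition. The only remaining case is $v \in V(L)$, which is a four-vertex kite, and the analysis there is direct: splits not isolating a spar-endpoint leave a triangle in $L$, which together with a triangle from $H_1$ (available since $T^3(H_1) = 2$) gives (i); the remaining split gives (iii).

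So your anticipated obstacles --- verifying that a foundational edge of a piece stays foundational in $G$, and the degenerate $H_2 = T_8$ sub-case where foundational need not mean spar --- simply do not arise. Your approach is not wrong, but you are proving strictly more than you need at each step; the paper's route buys a much shorter case analysis by front-loading the single structural fact (existence of $L$) and letting it absorb the entire $\mathcal{B}$-side.
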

\begin{proof}
 First consider the case where $G = T_{8}$. If we do not split one of $u_{1}$ or $u_{2}$ we have a $K_{4}-e$ subgraph remaining. If we split either $u_{1}$ or $u_{2}$ such that (iii) does not hold, then it is easy to see $T^{3}(G^{v}) =2$ and so (i) holds.

Therefore we can assume that $G$ is the Ore composition of a graph $H_{1} \in \mathcal{B}$ and a $4$-Ore graph $H_{2}$. If $T^{3}(H_{2}) \geq 3$, then by Proposition \ref{cliqueboundinequality} we have that $T^{3}(G) \geq 3 +2 -2 \geq 3$ contradicting that $T^{3}(G) =2$. Hence $T^{3}(H_{2}) \leq 2$.  \\
\noindent
\textbf{Case 1: $H_{2}=K_{4}$.} \\
Suppose first that $H_{2}$ is the split side where we split a vertex $z$ into two vertices $z_{1}$ and $z_{2}$. Then $H_{2}^{z}$ contains a kite, say $L$, so if $v \not \in V(L)$, then $G^{v}$ contains a kite and (ii) holds. If $v$ is not incident to a spar of the kite, then any split of $v$ results in a triangle, and thus $T^{3}(G^{v}) \geq 2$  and so (i) holds. Therefore $v$ is incident to the spar of the kite, and further the split of $v$ must leave up to relabelling $v_{1}$ with degree one and $v_1$ incident to the spar $f$ of the kite. We claim that $f$ is foundational. To see this, note that $T^3(G^v) = T^3(G-f)$. If $G-f$ contains a $K_4-e$ subgraph, then this subgraph also appears in $G^v$ and so (ii) holds and we are done. If $T^3(G-f) \geq 2$, then $T^3(G^v) \geq 2$, and so (i) holds and we are done. If $G-f$ possesses neither of these properties, then by definition $f$ is foundational. Thus (iii) holds, and again we are done.

Now suppose that $H_{2}$ is the edge side of the composition where we delete the edge $xy$. Then $H_{2}-xy$ is a kite. If $v \not \in V(H_{2}-xy)$, then $G^{v}$ contains a kite, as desired. If $v$ is not incident to a spar of a kite, then any split leaves a triangle in $H_{2}^{v}-xy$, and thus  $T^{3}(G^{v}) \geq 2$ and (i) holds. To see this, note that  since $T^3(H_1) = 2$, it follows that $T^3(H_1-z) \geq 1$. Thus $v$ must be incident to the spar of $H_{2}-xy$, and if $T^{3}(G^{v}) =1$, then we must have split $v$ in such a way that up to relabelling, $\deg(v_{1}) = 1$ and $v_1$ is incident to the spar of the kite. Thus (iii) holds. \\
\noindent
\textbf{Case 2: $H_{2} \neq K_{4}$.}\\
Recall that $H_2$ is $4$-Ore. Thus $T^3(H_2) \geq 2$ by Corollary \ref{onecliquecharacterization}; and since $T^3(G) = 2$, we have moreover that $T^3(H_2) = 2$ by Proposition \ref{cliqueboundinequality}. First suppose $H_{2}$ is the edge side of the composition where we delete the edge $xy$. Then $T^{3}(H_{2}-xy) =1$ as otherwise $T^{3}(G) \geq 3$ by Proposition \ref{cliqueboundinequality}. By Lemma \ref{K4e} either $H_{2} = M$ or there are two vertex-disjoint kites in $H_{2}$. This implies that there is a kite in $H_{2} -xy$. Let $L$ be this kite. If $v \not \in V(L)$, then $G^{v}$ contains a kite, and so (ii) holds. If $v$ is not incident to a spar of a kite, then any split leaves a triangle in $H_{2}^{v}-xy$, and thus $T^{3}(G^{v}) \geq 2$ and (i) holds. As in the previous case, this follows from the fact that $T^3(H_1-z) \geq 1$. Thus $v$ must be incident to the spar of $H_{2}-xy$, and if $T^{3}(G^{v}) =1$, then we must have split $v$ in such a way that up to relabelling, $\deg(v_{1}) = 1$ and $v_1$ is incident to the spar of the kite. But then (iii) holds.

Therefore we can suppose that $H_{2}$ is the split side of the composition, where we split the vertex $z$ into two vertices $z_{1}$ and $z_{2}$. By Proposition \ref{cliqueboundinequality}, every triangle packing of $H_{2}$ uses the vertex $z$. Thus regardless of whether $H_{2} = M$ or not, $z$ is incident to a spar of a kite in $H_{2}$. Thus there is a kite in $H_{2}^{z}$ that avoids both $z_1$ and $z_2$. Let $L$ be this kite. If $v \not \in V(L)$, then $G^{v}$ contains a kite, and so (ii) holds. If $v$ is not incident to the spar of $L$, then any split leaves a triangle in $(H_{2}^{z}-z_1-z_2)^v$, and thus $T^{3}(G^{v}) \geq 2$, and (i) holds. Thus $v$ is incident to the spar of $L$, and if $T^{3}(G^{v}) =1$, then $v$ was split in such a way that up to relabelling, $\deg(v_{1}) = 1$ and $v_1$ is incident to the spar of the kite. But then (iii) holds, as desired.
\end{proof}

\section{A review of the Potential Method}
\label{potentialsection}

In this section we review the basics of the potential method, which is the critical tool for the rest of the paper. As in the previous sections, we specialize this section to $4$-critical graphs; however, those familiar with graph homomorphisms should be able to easily extend the observations to the setting of $k$-critical graphs for other values of $k$. Of particular importance is the Potential-Extension Lemma (Lemma \ref{potentialextensionlemma}), which will be employed liberally. We start off with an important definition. 

\begin{definition}
Let $G$ be a $4$-critical graph, and let $F$ be any induced subgraph of $G$ with $v(F) < v(G)$. Let $\phi$ be a 3-colouring of $F$. Let $C_{1},C_{2}$ and $C_{3}$ be the (possibly empty) colour classes of $\phi$ (where a \emph{colour class} is understood here to be the set of vertices in $G$ which are mapped to the same colour under $\phi$). The \textit{quotient of $G$ by $\phi$}, denoted $G_{\phi}[F]$, is a graph with vertex set $(V(G) \setminus V(F)) \cup \{c_{i} \, | \, 1 \leq i \leq 3 \}$, and edge set $E_{1} \cup E_{2} \cup E_{3}$ where:
\begin{itemize}
   \item $E_1 =\{uv \, | \, uv \in E(G[V(G) \setminus V(F)])\}$;
    \item $E_2$ is the set of edges of the form $c_{i}c_{j}$ where there is a $u \in C_{i}$ and a $v \in C_{j}$ such that $uv \in E(G)$;
   \item $E_3$ is the set of edges of the form $uc_{i}$ such that there is a $v \in C_{i}$ where $uv \in E(G)$.
\end{itemize}
\end{definition}

The observation below is very simple, but is fundamental to the usefulness of the quotient.
\begin{obs}[\cite{Shortproof}, Claim 8]
\label{easyhom}
Let $G$ be a $4$-critical graph. If $F$ is a strict induced subgraph of $G$ with a $3$-colouring $\phi$, then $G_{\phi}[F]$ contains a $4$-critical subgraph.
\end{obs}

 This motivates the following definitions which appear in many papers (see \cite{EvelyneMasters, Oredegree7}, for example). See Figure \ref{potentialextensionexample} for an illustration.

\begin{definition}
Let $G$ be a $4$-critical graph and let $F$ be a strict induced subgraph of $G$. Let $\phi$ be a $3$-colouring of $F$. Let $W$ be a $4$-critical subgraph of $G_{\phi}[F]$. Let $X$ be the graph induced in $W$ by the vertices which are not vertices of $G$. We will call $X$ the \textit{source}. Let $F'$ be the subgraph of $G$ induced by $V(F) \cup (V(W) \setminus V(X))$. We say $F'$ is the \textit{extension} of $W$ and $W$ is the \textit{extender} of $F$. 
\end{definition}

We will always assume the source $X$ is a clique. While this does not follow from the definition, we can always choose a $3$-colouring such that this occurs. To see why, suppose $X$ is not a clique. Then we have a $3$-colouring $\phi$ with colour classes $C$ and $C'$ such that no vertex in $C$ is adjacent to a vertex in $C'$. Then we can take a new colouring $\phi'$ where we simply recolour all vertices in $C'$ with the colour used by $C$.  Repeating this procedure allows us to assume $X$ induces a clique.

The next two lemmas are very similar to Lemma 3.5 in \cite{4criticalgirth5}.

\begin{lemma}
\label{countinglemma}
Let $G$ be a $4$-critical graph, and let $F$ be a strict induced subgraph of $G$. Let $\phi$ be a $3$-colouring of $F$. Let $W$ be a $4$-critical subgraph of $G_{\phi}[F]$. Let $F'$ be the extension of $W$ and let $X$ be the source of $\phi$. The following hold:
\begin{itemize}
\item{$v(F') = v(F) +v(W) - v(X)$,}
\item{$e(F') \geq e(F) + e(W) - e(X)$, and}
\item{$T^{3}(F') \geq T^{3}(F) + T^{3}(W \setminus X)$.}
\end{itemize} 
\end{lemma}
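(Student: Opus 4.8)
The plan is to unwind the definitions of the quotient $G_f[F]$, the source $X$, the extender $W$, and the extension $F'$, and then track the three parameters $v$, $e$, and $T^{k-1}$ one at a time. First I would set up notation: let $C_1,\dots,C_t$ be the colour classes of $f$, so the new vertices of $G_f[F]$ are $c_1,\dots,c_t$; let $X = W \cap \{c_1,\dots,c_t\}$ be the source, and let $A = W \setminus X$ be the set of ``old'' vertices of $W$ (those lying in $V(G)\setminus V(F)$). By definition of the extension, $V(F') = A \cup \bigcup_{c_i \in X} C_i$, i.e.\ $F'$ is the induced subgraph of $G$ on the old vertices of $W$ together with all vertices of $F$ whose colour class is represented in $W$.

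For the vertex count: the sets $\{C_i : c_i \in X\}$ partition a subset of $V(F)$ of size exactly $v(F') - |A|$, and since $f$ is a colouring of \emph{all} of $F$, if $X$ contained every $c_i$ we would get $\sum |C_i| = v(F)$. The precise statement claimed, $v(F') = v(F) + v(W) - v(X)$, must therefore be read in the setting of the lemma where $X$ is the \emph{whole} source of $f$ and $F'$ accounts for all of $F$; then $v(F') = v(F) + |A| = v(F) + (v(W) - v(X))$, since $|A| = v(W) - |X| = v(W) - v(X)$. I would state this carefully and note it is just counting.

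For the edge count, the key point is that the quotient map only \emph{merges} vertices and never creates edges that do not come from $G$: every edge of $G_f[F]$ is the image of at least one edge of $G$, and distinct edges of $G_f[F]$ that are incident to old vertices, or that are $E_1$-edges, pull back to distinct edges of $G$ with both endpoints in $F'$. Concretely, I would fix, for each edge $e'$ of $W$, a preimage edge $\rho(e')$ of $G$ with both endpoints in $V(F')$ (using $E_1$, $E_2$, $E_3$ from the definition of the quotient), check $\rho$ is injective on $E(W) \setminus E(X)$, and observe $\rho(e') \in E(G[V(F')]) = E(F')$ in each case. Edges of $X$ are thrown away because an edge $c_ic_j$ of $X$ may pull back to an edge whose endpoints lie in $F$ and are already being counted as part of the internal structure of $F$; this is exactly why we only get $e(F') \ge e(F) + e(W) - e(X)$ rather than equality. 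For $T^{k-1}$, a $(k-1)$-clique packing of $F$ lives entirely inside $F$, hence inside $F'$; and a $(k-1)$-clique packing of $W \setminus X = A$ consists of cliques on old vertices, which are disjoint from $V(F)$ and whose edges survive literally into $F'$ (old–old edges are $E_1$-edges, present verbatim in $G$). These two packings are vertex-disjoint, giving $T^{k-1}(F') \ge T^{k-1}(F) + T^{k-1}(W\setminus X)$.

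The main obstacle is the edge-counting step: one must argue that the chosen preimages $\rho(e')$ are genuinely distinct as $e'$ ranges over $E(W)\setminus E(X)$, which requires a small case analysis on whether $e'$ is an $E_1$-, $E_2$-, or $E_3$-edge and on which endpoints lie in $X$. Two $E_3$-edges $u c_i$ and $u c_j$ (same old vertex $u$) pull back to edges $uv$, $uv'$ with $v \in C_i$, $v' \in C_j$, which are distinct since the colour classes are disjoint; an $E_1$-edge is its own unique preimage; and no $E_2$- or $E_3$-edge can coincide with an $E_1$-edge since the latter has both endpoints old. The only genuine loss is from $E_2$-edges internal to $X$, which is precisely the $-e(X)$ term. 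Once this bookkeeping is in place the lemma follows; everything else is routine.
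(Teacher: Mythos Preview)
Your proposal is correct and follows essentially the same approach as the paper's proof: decompose $V(F')$ as $V(F)$ together with $W\setminus X$, deduce the vertex count, and then observe that edges of $F$ and edges of $W\setminus X$ (pulled back to $G$) are disjoint and all lie in $F'$, with the analogous argument for clique packings. Your edge-counting argument via an explicit preimage map $\rho$ is more detailed than the paper's one-line appeal to the subgraphs being induced, and your remark that the vertex identity tacitly assumes all of $F$ sits inside $F'$ is a valid observation about the paper's own convention, but the substance of the two arguments is the same.
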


\begin{proof}
Observe that $V(F') = V(F) \cup (V(F') \setminus V(F))$. Additionally, $V(W) = (V(F') \setminus V(F)) \cup X$. Thus
$V(F') = (V(F) \cup V(W)) \setminus X$, and so $v(F') = v(F) + v(W) - v(X)$. From the above identity and the fact that the subgraphs are induced we see that $e(F') \geq e(F) + e(W) -e(X)$. 
Finally, let $\mathcal{T}_{1}$ and $\mathcal{T}_{2}$ be triangle packings of $F$ and $W\setminus X$, respectively. Then $\mathcal{T}_{1} \cup \mathcal{T}_{2}$ is a set of disjoint triangles in $F'$ and has size $T^{3}(F) + T^{3}(W \setminus X)$, which gives the result.  
\end{proof}

The following lemma is used frequently throughout the rest of the paper. We refer to it as the Potential-Extension Lemma.
\begin{lemma}[Potential-Extension Lemma]
\label{potentialextensionlemma}
Let $F$ be a strict induced subgraph of $G$, and let $\phi$ be a fixed $3$-colouring of $F$. With respect to $\phi$, let $F',W$ and $X$ be an extension, extender and source of $F$, respectively. The following identifies hold:
\[p(F') \leq p(F) + p(W) - 5v(X) +3e(X) +T^{3}(W) -T^{3}(W \setminus X),\]
and 
\[p(F') \leq p(F) + p(W) -4v(X) +3e(X).\]
\end{lemma}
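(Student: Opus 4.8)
The plan is to combine the three inequalities of the counting lemma (Lemma~\ref{countinglemma}) with the definition $p(\cdot) = av(\cdot) - be(\cdot) - cT^{k-1}(\cdot)$, using that $a,b,c > 0$. First I would write out $p(F')$ directly:
\[
p(F') = a v(F') - b e(F') - c T^{k-1}(F').
\]
Now I substitute the bounds from Lemma~\ref{countinglemma}. Since $a > 0$, the exact identity $v(F') = v(F) + v(W) - v(X)$ gives $a v(F') = a v(F) + a v(W) - a v(X)$. Since $b > 0$, the inequality $e(F') \geq e(F) + e(W) - e(X)$ yields $-b e(F') \leq -b e(F) - b e(W) + b e(X)$. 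Since $c > 0$, the inequality $T^{k-1}(F') \geq T^{k-1}(F) + T^{k-1}(W \setminus X)$ gives $-c T^{k-1}(F') \leq -c T^{k-1}(F) - c T^{k-1}(W \setminus X)$. Adding these three, and regrouping the $F$-terms into $p(F)$ and the $W$-terms into $p(W)$ (noting that $p(W) = a v(W) - b e(W) - c T^{k-1}(W)$, so the stray $-c T^{k-1}(W)$ must be added back), I obtain
\[
p(F') \leq p(F) + p(W) - a v(X) + b e(X) + c T^{k-1}(W) - c T^{k-1}(W \setminus X),
\]
which is the first claimed inequality.

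For the second inequality, I would bound $T^{k-1}(W) - T^{k-1}(W \setminus X) \leq v(X)$. This holds because deleting the $v(X)$ vertices of $X$ from $W$ can destroy at most $v(X)$ pairwise vertex-disjoint $(k-1)$-cliques: any $(k-1)$-clique packing of $W$ has the property that at most $v(X)$ of its cliques meet $V(X)$, so removing $X$ leaves a packing of size at least $T^{k-1}(W) - v(X)$ in $W \setminus X$, i.e.\ $T^{k-1}(W\setminus X) \geq T^{k-1}(W) - v(X)$. Substituting $c T^{k-1}(W) - c T^{k-1}(W\setminus X) \leq c\, v(X)$ into the first inequality (valid since $c > 0$) gives
\[
p(F') \leq p(F) + p(W) - a v(X) + b e(X) + c\, v(X),
\]
as desired.

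There is no real obstacle here: the lemma is essentially a repackaging of Lemma~\ref{countinglemma}, and the only mild point requiring care is the monotonicity bookkeeping — making sure the signs line up so that each of the three inequalities from the counting lemma is used in the direction that weakens $p(F')$, which requires precisely the hypothesis $a,b,c > 0$. The one substantive (but elementary) combinatorial input is the clique-deletion bound $T^{k-1}(W\setminus X) \geq T^{k-1}(W) - v(X)$ used for the second inequality; I would state this explicitly rather than appeal to it implicitly.
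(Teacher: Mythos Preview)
Your proposal is correct and follows essentially the same approach as the paper's proof: substitute the bounds from Lemma~\ref{countinglemma} into the definition of $p(F')$, regroup the $F$- and $W$-terms to assemble $p(F)$ and $p(W)$, and then invoke the elementary bound $T^{k-1}(W) \leq T^{k-1}(W\setminus X) + v(X)$ for the second inequality. Your write-up is slightly more explicit about the sign bookkeeping and the justification of the clique-deletion bound, but the argument is the same.
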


\begin{proof}

Observe that $T^{3}(W) \leq T^{3}(W \setminus X) + v(X)$ since every vertex-disjoint triangle in a triangle packing of $W$ either lies in $W - X$ or uses a vertex from $X$. Thus we have

\begin{align*}
 p(F') &= 5v(F') -3e(F') -T^{3}(F') \\
 &\leq 5(v(F) + v(W) -v(X)) - 3(e(F) + e(W) - e(X)) - T^{3}(F) - T^{3}(W \setminus X) \\
 & = p(F) + p(W) -5v(X) +3e(X) +T^3(W) - T^3(W \setminus X) \\
 & \leq p(F) + p(W) -4v(X) + 3e(X).
\end{align*}
\aftermath
\end{proof}

\begin{figure}
    \centering
\begin{tikzpicture}
\node[bluevertexv2] at (0,-.75) (v1) {};
\node[redvertexv2] at (-2.25,.5) (v2) {};
\node[greenvertexv2] at (-1,.5) (v3) {};
\node[bluevertexv2] at (-1.25,1.5) (v4) {};
\draw[thick,black] (v1)--(v2)--(v3)--(v4)--(v2);
\draw[thick,black] (v4)--(v3);
\draw[thick,black] (v1)--(v3);
\draw[dashed, rotate around= {130:(-1,.75)}] (-1.5,1) ellipse (2cm and 1.5cm);
\node[smallwhite] at (-1.6,-.75) (Dummy1) {$F$};
\draw[dashed] (-3,-1) rectangle (2.5,2);
\node[smallwhite] at (0,2.3) (Dummy2) {$F'$};

\node[blackvertexv2] at (2.25,.5) (v5) {};
\node[blackvertexv2] at (1,.5) (v6) {};
\node[blackvertexv2] at (1.25,1.5) (v7) {};
\draw[thick,black] (v1)--(v5)--(v6)--(v7)--(v5);
\draw[thick,black] (v1)--(v6);
\draw[thick,black] (v6)--(v7);
\draw[thick,black] (v7)--(v4);

\begin{scope}[xshift =6cm]
\node[redvertexv2] at (-2.25,.5) (v2) {};
\node[greenvertexv2] at (-1,.5) (v3) {};
\node[bluevertexv2] at (-1.25,1.5) (v4) {};
\draw[thick,black] (v2)--(v3)--(v4)--(v2);
\draw[thick,black] (v4)--(v3);
\draw[dashed, rotate around = {160: (.75,1)}] (.75,1) ellipse (2.4cm and .9cm);

\node[blackvertexv2] at (2.25,.5) (v5) {};
\node[blackvertexv2] at (1,.5) (v6) {};
\node[blackvertexv2] at (1.25,1.5) (v7) {};
\draw[thick,black] (v5)--(v6)--(v7)--(v5);
\draw[thick,black] (v6)--(v7);
\draw[thick,black] (v7)--(v4);
\draw[thick,black] (v6)--(v4);
\draw[thick,black] (v5)--(v4);
\node[smallwhite] at (0,2.4) (Dummy3) {$W$};
\end{scope}
\end{tikzpicture}
\caption{On the left we have a $4$-critical graph $M$, and a subgraph $F$ of $M$ with a $3$-colouring $\phi$. On the right we have the quotient graph $M_{\phi}[F]$, (here simply identifying the two blue vertices) and a $4$-critical subgraph $W$. We see that $W$ extends to $F'$, which in this case is the entire graph $M$, and that the source $X$ of $W$ is the single blue vertex in $W$. A simple calculation shows that $p(F) = 4$, $p(F') = 0$, $p(W)=1$, and $p(X) = 5$. Note that $0 = p(F') \leq p(F) + p(W) -5v(X) +3e(X) + T^{3}(W) - T^{3}(W \setminus X) = 0$. Thus this gives an example where Lemma \ref{potentialextensionlemma} is tight.}
\label{potentialextensionexample}
\end{figure}
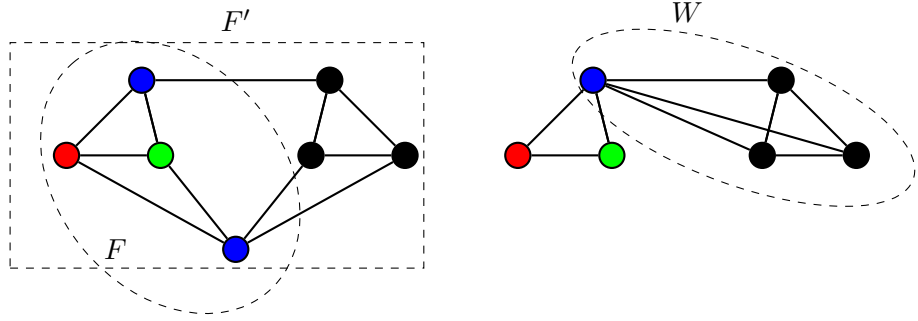

\section{Properties of a minimum counterexample}
\label{mincounterexamplesection}
In this section we prove lemmas regarding the structure of a vertex-minimum counterexample to Theorem \ref{maintheorem}. To that end, for this entire section, let $G$ be such a vertex-minimum counterexample. By Corollary \ref{notinb} and Theorem \ref{KostochkaYanceytight}, we have that $p(G) \geq -1$. We start off with a simple observation.

\begin{obs}
\label{not4ore} The graph $G$ is not $4$-Ore. 
\end{obs}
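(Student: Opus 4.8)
The plan is to argue by contradiction: I assume $G$ is $4$-Ore and show that $G$ then satisfies the conclusion of Theorem \ref{maintheorem}, contradicting the choice of $G$ as a counterexample. The key point is that for a $4$-Ore graph the $(5,3,1)$-potential is determined entirely by its number of vertex-disjoint triangles. First I would record the identity $p(H) = 5v(H) - 3e(H) - T^3(H) = \text{KY}(H) - T^3(H)$, which is immediate from the definitions. Next, since $\text{KY}(K_4) = 5\cdot 4 - 3 \cdot 6 = 2$ and $\text{KY}$ is preserved by Ore composition (Observation \ref{Orecomposition} gives $\text{KY}(G') = \text{KY}(H_1) + \text{KY}(H_2) - 2 = 2$ whenever $\text{KY}(H_1) = \text{KY}(H_2) = 2$), an easy induction shows every $4$-Ore graph $H$ satisfies $\text{KY}(H) = 2$; equivalently, this is the Kostochka--Yancey tightness theorem \cite{tightnessore} specialized to $k = 4$. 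Hence $p(G) = 2 - T^3(G)$.

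Now I would combine this with the two facts already available for our $G$: as a vertex-minimum counterexample, $p(G) \geq -1$; and, being $4$-Ore, $G$ contains a triangle by Observation \ref{deletingavertex}, so $T^3(G) \geq 1$. Together these force $T^3(G) \in \{1, 2, 3\}$, and I would finish by examining each possibility. If $T^3(G) = 1$, then Corollary \ref{onecliquecharacterization} gives $G = K_4$, so $p(G) = 1$, which is exactly the first bullet of Theorem \ref{maintheorem}. If $T^3(G) = 2$, then $p(G) = 0$, which is exactly the second bullet. If $T^3(G) = 3$, then $p(G) = -1$ and $G$ is a $4$-Ore graph with $T^3(G) = 3$, which is one of the cases of the third bullet. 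In all three cases the conclusion of Theorem \ref{maintheorem} holds for $G$, contradicting that $G$ is a counterexample; therefore $G$ is not $4$-Ore.

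I do not expect any genuine difficulty here: the argument amounts to unwinding the definition of the potential together with the fact that $\text{KY}(H) = 2$ for every $4$-Ore $H$. The one point to handle with a little care is matching the surviving values $T^3(G) \in \{1,2,3\}$ to the first three bullets of the theorem, and observing that the hypothesis $p(G) \geq -1$ is exactly what lets us bypass the ``otherwise'' clause --- which would concern $4$-Ore graphs with $T^3 \geq 4$, where $p(G) = 2 - T^3(G) \leq -2$.
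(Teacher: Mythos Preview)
Your proposal is correct and follows essentially the same approach as the paper: compute $p(G) = 2 - T^3(G)$ for $4$-Ore graphs and match each possible value of $T^3(G)$ to a bullet of Theorem \ref{maintheorem}. Your version is simply more explicit --- you derive $\text{KY}(H) = 2$ by induction via Observation \ref{Orecomposition} and invoke Corollary \ref{onecliquecharacterization} for the $T^3(G)=1$ case --- whereas the paper compresses this into a one-line appeal to Theorem \ref{KY} and the phrase ``all other cases are covered as special cases of Theorem \ref{maintheorem}.''
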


\begin{proof}
Observe that if $G$ is $4$-Ore, then by Theorem \ref{KostochkaYancey4}, $p(G) = 2 - T^{3}(G)$. If $T^{3}(G) \geq 4$, then $p(G) \leq -2$, contradicting that $p(G) \geq -1$. All other cases are covered as special cases of Theorem \ref{maintheorem}. 
\end{proof}

We now prove what is usually called a ``gap" lemma for potential method proofs.
\begin{lemma}
\label{potentiallemma}
If $F$ is a subgraph of $G$ with $v(F) < v(G)$, then $p(F) \geq 3$. Further, $p(F) \geq 4$ unless one of the following occurs: $G \setminus F$ is a triangle of degree $3$ vertices, or $G \setminus F$ is a vertex of degree $3$, or $G$ contains a kite.
\end{lemma}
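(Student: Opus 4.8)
The plan is to use the potential-extension lemma (Lemma~\ref{potentialextensionlemma}) together with the structural results on $4$-Ore graphs and graphs in $\mathcal{B}$ proven in Sections~\ref{cliquesection} and~\ref{T8structure}. Suppose for contradiction that $F$ is a subgraph of $G$ with $v(F) < v(G)$ and $p(F) \le 2$. Since $G$ is $4$-critical and $F$ is a strict subgraph, $F$ is $3$-colourable; replacing $F$ by an induced subgraph on the same vertex set only decreases $p$ (deleting edges of a subgraph does not increase $av - be - cT^3$ in the relevant direction — actually adding edges decreases potential, so we should take $F$ induced to make $p(F)$ as large as possible, hence WLOG $F$ is induced). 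Fix a $3$-colouring $\phi$ of $F$ and form the quotient $G_\phi[F]$. By the corollary to Observation~\ref{easyhom}, $G_\phi[F]$ contains a $4$-critical subgraph $W$ using at least one vertex outside $V(F)$; let $X$ be the source and $F'$ the extension. Since $G$ is a minimum counterexample and $v(F') \le v(G)$ with $F'$ containing a vertex not in $F$... the key point is to iterate: either $F' = G$, or $F'$ is again a strict subgraph to which we can recurse, so eventually we reach $F'' = G$ and derive a bound on $p(G)$.

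The heart of the argument is the inequality from Lemma~\ref{potentialextensionlemma}:
\[
p(F') \le p(F) + p(W) - 5v(X) + 3e(X) + T^3(W) - T^3(W\setminus X),
\]
and since $W$ is $4$-critical, Theorem~\ref{maintheorem} (which holds for all $4$-critical graphs on fewer vertices than $G$, and for $W=G$ as well since we only need the classification) gives $p(W) \le 1$, with $p(W) = 1$ only for $W = K_4$. The source $X$ is a proper subgraph of $W$ that is $3$-colourable (it is a set of colour classes, hence each vertex of $X$ is a quotient vertex, and the map sending $F$ into $X$ is by construction a homomorphism), so $X$ itself is small; one analyzes the possible $X$ by cases on $v(X) \in \{1,2,3,\dots\}$. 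For $v(X) = 1$: $-5 + 0 + T^3(W) - T^3(W\setminus X)$, and $T^3(W) - T^3(W - x) \le 1$, giving a net contribution $\le -4$. For $v(X) = 2$: $X$ is an edge or two non-adjacent vertices; $-10 + 3e(X) + (T^3(W) - T^3(W\setminus X))$; when $X$ is an edge $e(X)=1$ and the clique term is at most... For larger $v(X)$ the term $-5v(X) + 3e(X)$ is very negative unless $X$ is dense, but $X$ is $3$-colourable so $e(X) \le$ roughly $v(X)^2/3$; one checks that in every case $-5v(X) + 3e(X) + v(X) \le -2$ when $v(X) \ge 2$ (using the second inequality of Lemma~\ref{potentialextensionlemma}), and $\le -4$ when $v(X)=1$. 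Combining, $p(F') \le p(F) + 1 - 2$ always, i.e. $p(F') \le p(F) - 1$ in the worst case and $p(F') \le p(F) - 3$ when $v(X) = 1$ and $W = K_4$.

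Now I iterate the construction, replacing $F$ with $F'$, until the extension equals $G$ (this must terminate since $v(F') > v(F)$ strictly each step, as $W$ has a vertex outside $V(F)$). At termination we obtain $p(G) \le p(F) + (\text{sum of per-step decrements}) \le 2 - 1 = 1$ — which is not yet a contradiction, so more care is needed: we must use that $p(F) \le 2$ gives, after at least one extension step with decrement $\ge 1$, $p(G) \le 1$; to push below, note that for $p(G) \ge -1$ to be violated we need the total decrement to be small, which forces every step to have $W = K_4$, $v(X) = 1$, and the decrement to be exactly... here is where the \emph{Further} clause comes in. The second part of the statement is proven by sharpening: to have $p(F) = 3$ rather than $\ge 4$, the above analysis must be tight, which forces $G \setminus F$ to be exactly a single degree-three vertex, or a triangle of degree-three vertices, or forces a kite in $G$ (the tight cases of $-5v(X)+3e(X)$ with small pieces, via Observations~\ref{deletingavertex}, \ref{deletingaclique} and Lemmas~\ref{K4e}, \ref{splittinglemma}, \ref{4Oresplit3triangle}, \ref{T8splits} controlling how cliques behave).

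\medskip
\noindent\textbf{Main obstacle.} The main difficulty is the bookkeeping in the iteration and the tightness analysis for the \emph{Further} clause: one must track exactly when each inequality in Lemma~\ref{potentialextensionlemma} is tight, translate tightness of $T^3(W) - T^3(W\setminus X)$ and of $e(F') \ge e(F)+e(W)-e(X)$ back into structural statements about $G$ (no extra edges between $F$ and the new part, the new part being $K_4$-like), and then invoke the clique-packing lemmas to conclude that the only configurations achieving $p(F) = 3$ are the three listed ones. Handling the case $v(X) \ge 3$ cleanly — showing such $X$ cannot arise or contributes a decrement $\ge 2$ — also requires knowing that a $3$-colourable source sitting inside a $4$-critical $W$ cannot be too dense, which follows from $W$ being $4$-critical (so $W$ has no $K_4$ unless $W = K_4$) combined with Theorem~\ref{KY} applied to $W$.
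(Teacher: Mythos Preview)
Your iteration scheme does not close: you yourself note that one step yields only $p(G) \le 1$, which is no contradiction, and the decrement estimates you give (``$\ge 1$ in the worst case'') are too loose. The fix the paper uses, and which you are missing, is to take $F$ to be a \emph{maximal} counterexample (largest $v(F)$, then smallest $p(F)$). One application of the potential-extension lemma then forces $F' = G$, because otherwise $F'$ would be a strictly larger counterexample with $p(F') \le p(F)$. With $F' = G$ and $p(G) \ge -1$, you get a single clean inequality $-1 \le p(F) + p(W) - 5v(X) + 3e(X) + T^3(W) - T^3(W\setminus X)$ to analyse. A second oversight: since $\phi$ is a $3$-colouring, the source $X$ has at most three vertices, so there are only three values of $v(X)$ to check, not an unbounded family; your remarks about ``$v(X) \ge 3$'' and bounding $e(X)$ via $3$-colourability are unnecessary. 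You also need $v(W) < v(G)$ to invoke the theorem on $W$; this follows once $v(F) \ge 4$, which is why the paper first disposes of $F \in \{K_1, K_2, P_2, K_3\}$ by direct computation.

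The ``Further'' clause is where your proposal is weakest. It is not a generic tightness argument: one must run through the six possibilities for $W$ ($K_4$; $4$-Ore with $T^3(W)=2$; $W_5$; $W\in\mathcal{B}$; $4$-Ore with $T^3(W)=3$; $p(W)\le -2$) crossed with $v(X)\in\{1,2,3\}$, and identify exactly which cells give $p(F)\ge 3$ rather than $p(F)\ge 4$. Only three cells are tight. For instance, when $W=K_4$ and $v(X)=1$ with $p(F)=3$, the paper shows $G\setminus F$ is a triangle and then counts edges between that triangle and $F$ to force all three vertices to have degree three; when $W$ is $4$-Ore with $T^3(W)=2$ and $v(X)=1$, Lemma~\ref{K4e} is invoked to produce a kite in $G$ (the ``$G$ contains a kite'' alternative). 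Your reference to Lemmas~\ref{splittinglemma}, \ref{4Oresplit3triangle}, \ref{T8splits} is misplaced --- those splitting lemmas are not used here; what is used is Observation~\ref{deletingaclique} and Lemma~\ref{K4e} to control $T^3(W\setminus X)$.
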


\begin{proof}
Suppose not. Let $F$ counterexample that maximizes $v(F)$, and subject to that, minimizes $p(F)$.  Observe that $F$ is an induced subgraph, as adding edges reduces the potential. Observing that $p(K_{1}) = 5$, $p(K_{2}) = 7$, $p(P_2) = 9$ (where $P_2$ is the path of length two), and $p(K_{3}) = 5$, we may assume that $v(F) \geq 4$. Let $\phi$ be a $3$-colouring of $F$, and let $F', W,X$ be an extension, extender, and source of $G_{\phi}[F]$ respectively.

First we deal with the case where $F' \neq G$. Note $v(F') > v(F)$, by the definition of extension. We claim that $p(F') \leq p(F)$. By the Potential-Extension Lemma (Lemma \ref{potentialextensionlemma}), we have
\[p(F') \leq p(F) +p(W) - 4v(X) +3e(X).\]
Note that since $v(F) \geq 4$, it follows that $W$ is a smaller $4$-critical graph than $G$. Therefore $p(W) \leq 1$. Further $v(X) \geq e(X)$. Therefore it follows that $p(F') < p(F)$, and since $F'$ both has more vertices than $F$ and smaller potential, implies we should have taken $F'$ as our counterexample, a contradiction. 
Therefore we may assume that $F' = G$. Recall that as $G$ is a counterexample to Theorem \ref{maintheorem}, we have that $p(G) \geq -1$.

 We split into cases depending on what $W$ is.\\
\textbf{Case 1: $W = K_{4}$}.\\
First suppose that $v(X) =1$. Then by the Potential-Extension Lemma,  $-1  \leq p(G) \leq p(F) + \linebreak p(W)  -5v(X) +3e(X) +T^{3}(W) - T^{3}(W/X) = p(F)+1 -5$ which implies that $p(F) \geq 3$. If $p(F) \geq 4$, then we are done, so we may assume that $p(F) = 3$.  Observe that $G \setminus F$ contains  three vertices, and they must induce a triangle as $W -X$ is a triangle. Let $T$ be the triangle in $G \setminus F$. Then $-1 \leq p(G) \leq p(F) +5 - 3e(T,F)$, where $e(T,F)$ is the number of edges with one endpoint in $T$ and one endpoint in $F$. If $e(T,F) \geq 4$, then we have $-1 \leq p(F) -7$ so $p(F) \geq 6$. Hence $e(T,F) \leq 3$. But since $G$ is $4$-critical, the minimum degree is $3$. Hence $e(T,F) \geq 3$ and $T$ is a triangle of degree $3$ vertices.

If $v(X) = 2$, then by the Potential-Extension Lemma we have $ -1 \leq p(F) +1 - 7 +1$  which implies that $p(F) \geq 4$.

If $v(X) =3$, then $ -1 \leq p(F) +1 -6 +1$  which gives $p(F) \geq 3$. If further $p(F) \geq 4$, then we are done. So we may assume that $p(F) =3$. Note that $G \setminus F$ is a single vertex $v$. We will argue that this vertex has degree $3$. Note that $-1 \leq p(G) \leq p(F) + 5 - 3 \deg(v) = 8 - 3\deg(v)$. As $G$ is $4$-critical, $\deg(v) \geq 3$; and since $-1 \leq 8-3\deg(v)$, we have that $\deg(v) \leq 3$. Hence $\deg(v) =3$. \\
\textbf{Case 2: $W$ is $4$-Ore with $T^{3}(W) =2$.}\\
If $v(X) = 1$, then $-1 \leq p(F) +0 -5 + 1$ so $p(F) \geq 3$. As $W$ is $4$-Ore with $T^{3}(G) =2$, by Lemma \ref{K4e} either $W$ contains two vertex-disjoint kites, or $W = M$. If $W \neq M$, then $G$ contains a kite, so we are done. If $W = M$ and the vertex in $X$ is not the unique degree $4$ vertex in the Moser spindle, then again $G$ contains a kite and so we are done.  Otherwise, $X$ contains only the unique vertex of degree $4$ in the Moser spindle, and in this case $T^{3}(W-X) = 2$. From the Potential-Extension Lemma, we have that $-1 \leq p(F) +0 -5$ which implies that $p(F) \geq 4$. Thus it follows that either $G$ contains a kite or $p(F) \geq 4$, and we are done.

If $v(X) \in \{2,3\}$, we claim that $-1 \leq p(F) - 6+1$. To see this, note that by Lemma \ref{K4e}, $W$ contains two edge-disjoint kites that share at most one vertex. It follows from this that $T^3(W\setminus X) \in \{1,2\}$. Then $p(F) \geq 4$, so we are done.  \\
\textbf{Case 3: $W = W_{5}$.} \\ 
Recall that $X$ is assumed to induce a clique. Observe that deleting a clique of size at most three from $W_{5}$ may result in a triangle-free graph. Hence, for any $v(X) \in \{1,2,3\}$, we have 
$-1 \leq p(F) -1 - 5 +1$. Therefore, $p(F) \geq 4$ as desired. \\
\textbf{Case 4: $W \in \mathcal{B}$.} \\
Note that in this case $T^3(W) = 2$. If $v(X) = 1$, then $T^3(W\setminus X) \geq 1$, and so $-1 \leq p(F) -1 -5 +1$ which gives $p(F) \geq 4$. If $v(X) \in \{2,3\}$, then $-1 \leq p(F) -1 -6 +2$, which gives $p(F) \geq 4$, so we are done.\\
\textbf{Case 5: $W$ is $4$-Ore with $T^{3}(W) = 3$.}\\
If $v(X) =1$, then $T^3(W\setminus X) \geq 2.$ Thus $-1 \leq  p(F) -1 -5 +1$ which gives $p(F) \geq 4$, so we are done.
If $v(X) \in \{2,3\}$, then $-1 \leq p(F) - 1 -6+ 2$, which gives $p(F) \geq 4$. (Note that in the $v(X) = 3$ case, we are using the fact that $T^3(W \setminus X) \geq 1$ (see Observation \ref{deletingaclique})).\\
\textbf{Case 6: All other cases.}\\
If $v(X) =1$, then $-1 \leq p(F) -2 -5 +1$ which gives $p(F) \geq 4$. 
If $v(X) = 2$, then $-1 \leq p(F) -2 -7 +2$ which gives $p(F) \geq 6$.
If $v(X) =3$, then $-1 \leq p(F) - 2 - 6 +3$ which gives $p(F) \geq 4$. 
This is all possible cases, so the result follows. 
\end{proof}

We now strengthen the above result.
\begin{lemma}\label{k4-e}
The counterexample $G$ does not contain $K_{4}-e$ as a subgraph.  
\end{lemma}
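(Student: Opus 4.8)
The plan is to suppose that $G$ contains $K_4-e$ as a subgraph and derive a contradiction using the quotient construction, the potential‑extension lemma, and Lemma \ref{potentiallemma}.

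First, $G$ contains no $K_4$: a $K_4$ subgraph is $4$‑chromatic, hence cannot be a \emph{proper} subgraph of the $4$‑critical graph $G$, so $G = K_4$ — but $p(K_4)=1$ is one of the cases of Theorem \ref{maintheorem}, so $K_4$ is not a counterexample. Thus we may assume $D = K_4-e$ occurs as an \emph{induced} subgraph of $G$; write its spar as $xy$ and its two degree‑two vertices as $a,b$, so $a\not\sim b$ and $\{a,b\}\subseteq N_G(x)\cap N_G(y)$. Not both of $x,y$ can have degree $3$: otherwise $N_G(x)=\{y,a,b\}$ and $N_G(y)=\{x,a,b\}$, so $\{a,b\}$ separates $\{x,y\}$ from the nonempty set $V(G)\setminus\{x,y,a,b\}$ (nonempty as $G$ is $4$‑critical and $G\neq K_4$, so $v(G)\ge 5$) and is a $2$‑cut, contradicting that $G$ is $3$‑connected (Observation \ref{notorecomp}). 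By symmetry assume $\deg_G(x)\ge 4$.

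Now apply the potential machinery with $F = D$, so $p(F)=4$. Every $3$‑colouring of $F$ has colour classes $\{x\},\{y\},\{a,b\}$; fix such a colouring $\phi$. A direct check shows $G_\phi[F]$ is precisely the graph obtained from $G$ by identifying $a$ and $b$ to a vertex $c_3$ (renaming $x,y$ as $c_1,c_2$), with $\{c_1,c_2,c_3\}$ inducing a triangle. Let $W$ be a $4$‑critical subgraph of $G_\phi[F]$ having a vertex in $V(G)\setminus\{x,y,a,b\}$, let $X = V(W)\cap\{c_1,c_2,c_3\}$ be the source, and let $F'$ be the extension. Since $G_\phi[F]-c_3 \cong G-\{a,b\}$ is a proper (hence $3$‑colourable) subgraph of $G$ while $W$ is not $3$‑colourable, we get $c_3\in V(W)$, so $1\le |X|\le 3$ and $X$ induces a clique with $e(X)=\binom{|X|}{2}$. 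As $v(W)<v(G)$, $W$ satisfies Theorem \ref{maintheorem}, so $p(W)\le 1$; feeding this into Lemma \ref{potentialextensionlemma} gives
\[ p(F') \le p(F) + p(W) - 4|X| + 3\binom{|X|}{2} \le 5 - 4|X| + 3\binom{|X|}{2} \le 2 \]
for every $|X|\in\{1,2,3\}$. If $F'\subsetneq G$ this contradicts Lemma \ref{potentiallemma}, so $F' = G$, which forces $V(W)\supseteq V(G)\setminus\{x,y,a,b\}$ and $v(W) = v(G)-4+|X|$.

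It remains to rule out the case $F'=G$, and this is where the real work lies. Here the potential‑extension inequality reads $p(G) = p(F') \le 4 + p(W) - 5|X| + 3\binom{|X|}{2} + T^3(W) - T^3(G-\{x,y,a,b\})$, which together with $p(G)\ge -1$ and $p(W)\le 1$ confines $p(W)$ to a short list and forces its constituent inequalities on $e(\cdot)$ and $T^3(\cdot)$ to be near‑tight; in particular $e(W)\le e(G)-5+\binom{|X|}{2}$. One then cases on $|X|$: since $c_3\in X$, the source is $\{c_3\}$, $\{c_1,c_3\}$ (up to symmetry), or $\{c_1,c_2,c_3\}$, and undoing the identification of $a$ and $b$ exhibits $W$ as obtained from $G$, $G-y$, or $G/(a\!=\!b)$ with $a$ and $b$ glued; the near‑tight edge count then unwinds — using $|N_G(a)\cap N_G(b)|\ge 2$ and $\deg_G(x),\deg_G(y)\ge 3$ — to force $\deg_G(x)=\deg_G(y)=3$, which we have excluded. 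The residual configurations, notably $W=K_4$ (which pins $v(G)\le 7$), are eliminated by inspection. I expect this endgame, and in particular the careful bookkeeping of $T^3$ under the identification $a\!=\!b$ and the small $W=K_4$ cases, to be the bulk of the proof; everything up to it is routine once one recognises $G_\phi[F] = G/(a\!=\!b)$.
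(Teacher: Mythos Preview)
Your reduction to the case $F'=G$ is correct and cleaner than the paper's: the uniform bound $p(F')\le 4+p(W)-4|X|+3\binom{|X|}{2}\le 2$ dispatches $F'\subsetneq G$ without the case analysis on $W$ that the paper performs. Your observation that $G$ contains no kite (a $K_4-e$ whose two spar vertices both have degree~$3$), because a kite would produce a $2$-cut, is also correct and is essentially equivalent to the paper's extremal choice of $F$.

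The gap is the endgame. You assert that for $F'=G$ the near-tight edge count ``unwinds to force $\deg_G(x)=\deg_G(y)=3$''. This is not the mechanism, and I do not see how to make it one. Take $|X|=3$ with $W$ a $4$-Ore graph and $T^3(W)\in\{2,3\}$: here $W=G/(a{=}b)$, and tightness in Lemma~\ref{countinglemma} only yields $|N_G(a)\cap N_G(b)|=2$ --- nothing about the degrees of the spar vertices $x,y$. In fact the paper proves, and needs, the \emph{opposite}: since $x,y$ are each adjacent to both $a$ and $b$ and have degree $\ge 3$ in the $4$-critical graph $W$, one gets $\deg_G(x),\deg_G(y)\ge 4$.

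What actually closes these cases is structural, not arithmetic. The paper first shows directly (by a colouring argument on $G-ua$ for a hypothetical third common neighbour $u$) that $a$ and $b$ have no common neighbours besides $x$ and $y$; you never establish this. Then, in the hard subcases $|X|=3$, it invokes Lemmas~\ref{K4e} and~\ref{deletingatriangle} to locate a kite $K$ in $W$ (respectively in $W\setminus X$) disjoint from the identified vertex $c_3$. The no-common-neighbours fact guarantees that un-identifying $c_3$ does not change the degree of any vertex outside $\{x,y\}$, so $K$ survives as a genuine kite in $G$ --- contradicting your no-kite observation (or, as the paper phrases it, the extremal choice of $F$). Your sketch mentions neither the no-common-neighbours step nor Lemmas~\ref{K4e} and~\ref{deletingatriangle}; these are the real structural inputs, and edge bookkeeping alone cannot substitute for them.
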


\begin{proof}
Suppose not. Let $F$ be a $K_{4}-e$ subgraph in $G$ chosen to maximize the number of degree $3$ vertices in $F$ that are also degree $3$ vertices in $G$. Denote $V(F)$ by $\{w,x,y,z\}$, where $xy \not \in E(G)$. Note that since $G \neq K_{4}$, we have that $F$ is an induced subgraph. We claim that $x$ and $y$ have no common neighbours aside from $w$ and $z$. Suppose not, and let $u$ be a common neighbour of $x$ and $y$ with $u \not \in \{w,z\}$. By $4$-criticality, $G-ux$ has a $3$-colouring, say $\phi$. Then $\phi(u) = \phi(x)$ as otherwise $G$ has a $3$-colouring. Notice in any $3$-colouring of $F$, $\phi(x) =\phi(y)$. But then $uy \in E(G)$ and $\phi(u) = \phi(y)$, a contradiction. Hence $x$ and $y$ have no common neighbours outside $\{w, z\}$.

Fix any $3$-colouring of $F$, and let $F', W$ and $X$ be an extension, extender, and source of $F$. By the Potential-Extension Lemma, we have 
\[p(F') \leq p(F) +p(W) -5v(X) +3e(X) +T^{3}(W) - T^{3}(W \setminus X).\]
Observe that $p(F) = 4$. Throughout the proof of this lemma, we let $x^{*}y$ be the vertex obtained by identifying $x$ and $y$. Note that since $W \not \subseteq G$, it follows that $x^{*}y \in X$. Moreover, note that $W$ is a smaller $4$-critical graph than $G$, and hence by the minimality of $G$ the potential of $W$ is described by one of the outcomes of Theorem \ref{maintheorem}. We now break into cases, depending on the outcome of Theorem \ref{maintheorem} applied to $W$.

\textbf{Case 1: $W = K_{4}$.}\\
First suppose $v(X) =3$. In this case, $w, z,$ and one of $y$ and $x$ share a common neighbour, and so  $G$ contains a $K_4$. This is a contradiction, as $K_4$ is $4$-critical and $G \neq K_4$. 

 Now suppose $v(X) =2$. Then without loss of generality let $X = \{z,x^{*}y\}$. Then there is a subgraph $H$ of $G$ where $V(H) = V(F) \cup \{u,u'\}$ and there are edges $u'z$, $u'u$, $u'x$, $uy$, $uz$ and $E(F)$. But this subgraph is $W_5$, which is 4-critical; so $G =W_5$, a contradiction. 

Finally suppose that $v(X) =1$. Then similarly to the above argument, $G$ is isomorphic to the Moser spindle, and we are done. \\
\textbf{Case 2: $W$ is $4$-Ore with $T^{3}(W) =2$.}\\
First suppose that $v(X) =1$. Then it follows that $G$ contains a subgraph $H$ that is the Ore composition of $W$ and $K_{4}$. Since $H$ is $4$-critical, $G = H$. This implies that $G$ is $4$-Ore, contradicting Observation \ref{not4ore}.

Now suppose that $v(X) =2$. By Lemma \ref{K4e}, $W$ contains two edge-disjoint kites that share at most a vertex. Thus $T^3(W \setminus X) \geq 1$. By the Potential-Extension Lemma, we have $p(F') \leq 4 + 0 -7 +1$ which gives $p(F') \leq -2$. If $F' \subset G$, this contradicts Lemma \ref{potentiallemma}. If $F' = G$, this contradicts the fact that $p(G) \geq -1$. 

Now suppose $v(X) =3$. Note that by Lemma \ref{K4e}, $W$ contains two edge-disjoint kites that share at most one vertex. Thus $T^3(W \setminus X) \geq 1$. By the Potential-Extension Lemma, we have $p(F') \leq 4 + 0 -6  +1$ which implies that $p(F') \leq -1$. By Lemma \ref{potentiallemma}, since $F' \subseteq G$, it follows that $F' = G$. Thus $G$ is obtained from $W$ by unidentifying $x^{*}y$ into $x$ and $y$. Note that since $x$ and $y$ have no common neighbours aside from $w$ and $z$ and since every vertex in $G$ has degree at least three, $x^{*}y$ has degree at least four in $W$. First consider the case where $W = M$. Then $x^{*}y$ is the unique vertex of degree $4$ in $M$. As $G$ is obtained by unidentifying $x^{*}y$ to $x$ and $y$, it follows that either $G$ has a vertex of degree at most $2$ (contradicting the fact that $G$ is $4$-critical), or that $G$ is $3$-regular. But if $G$ is $3$-regular, since $G \neq K_{4}$, we have that $G$ is $3$-colourable by Brook's Theorem. Hence $W \neq M$. Therefore by Lemma \ref{K4e}, $W$ contains two vertex-disjoint kites. Since $G$ is obtained by unidentifying $x^{*}y$, this implies that $G$ contains a kite. But note that since $W$ is $4$-critical, both $w$ and $z$ have degree at least three in $W$; and thus in $G$ after unidentifying $x^{*}y$, both $w$ and $z$ have degree at least four. But since $G$ contains a kite, this kite contradicts our choice of $F$, since we picked $F$ to contain the largest number of vertices which are degree $3$ in the $K_{4}-e$ subgraph and in $G$. \\
\textbf{Case 3: $W = W_{5}$.}\\
If $v(X) = 1$, then $G$ is an Ore composition of $K_{4}$ and $W_{5}$ (as this is a $4$-critical graph). Observe every split of $W_{5}$ contains at least one triangle avoiding at least one of $x$ or $y$, and further the deletion of any edge of $W_{5}$ leaves a triangle containing neither $x$ nor $y$. It follows that an Ore composition of $K_{4}$ and $W_{5}$ has $p(G) \leq -2$, which contradicts $G$ the fact that $p(G) \geq -1$. 

If $v(X) \in \{2,3\}$, then by the Potential-Extension Lemma we have $p(F') \leq 4 -1 -6+1$ which gives $p(F') \leq -2$. If $F' \subset G$, this contradicts Lemma \ref{potentiallemma}. If $F' = G$, this contradicts the assumption that $p(G) \geq -1.$\\
\textbf{Case 4: $W \in \mathcal{B}$.}\\
If $v(X) =1$, then $G$ is the Ore composition of $W$ and $K_{4}$, as such a graph is $4$-critical. Note that if $G \in \mathcal{B}$, then $G$ is not a counterexample, a contradiction. Hence $T^{3}(G) \geq 3$, in which case $p(G) \leq -2$ (note we cannot have $T^{3}(G) \leq 1$, as there is a triangle in $K_{4}$ after deleting any vertex), contradicting that $G$ is a counterexample. If $v(X) \in \{2,3\}$,  then by the Potential-Extension Lemma we have $p(F') \leq 4 -1 -6 +1$ which gives $p(F') \leq -2$. As in Case 3, this leads to a contradiction. \\
\textbf{Case 5: $W$ is $4$-Ore with $T^{3}(G) =3$.}\\
If $v(X) =1$, then $G$ is $4$-Ore, contradicting Observation \ref{not4ore}. If $v(X) =2$, then  $p(F') \leq 4 -1 -7 + 2$ and $p(F') \leq -2$. As in Cases 3 and 4, this leads to a contradiction. 

So $v(X) =3$. In this case we claim $F'$ is all of $G$. If not, take any $3$-colouring $\psi$ of $F'$ (which exists by $4$-criticality). As $x$ and $y$ get the same colour in this $3$-colouring, this implies when we identify $x$ and $y$, we get a $3$-colouring of $W$, contradicting that $W$ is $4$-critical. Hence $F' = G$.

If $T^{3}(W \setminus X) \geq 2$, then by the Potential-Extension Lemma we have
$p(F') \leq 4 - 1 - 6 +1$, which gives $p(F') \leq -2$, a contradiction. 

Therefore by Lemma \ref{deletingatriangle} it follows that $W-X$ contains a kite. 

Let $K$ be the kite in $W-X$, with spar $st$. We claim there is at most one edge from $F$ to $K$: otherwise, $p(G[V(F) \cup V(K)]) \leq 5(8)-3(10+2)-2 = 2$, contradicting Lemma \ref{potentiallemma}. (Note trivially $G[V(F) \cup V(K)] \neq G$, since $T^3(W) = 3$ but $T^3(G[V(F) \cup V(K)]) = 2$.) Thus at least one of $s$ and  $t$ has degree $3$ in $G$. It now suffices to argue that $w$ and $z$ do not have degree $3$ in $G$, thus contradicting our choice of $F$.

To see this, note that since $W$ is $4$-critical, both $w$ and $z$ have degree at least three in $W$. But $G$ is obtained from $W$ by unidentifying $d$ into the vertices $x$ and $y$. As $x$ and $y$ share $w$ and $z$ as neighbours, $w$ and $z$ have degree at least four in $G$. Thus $K$ contradicts our choice of $F$.

\textbf{Case 6: All other cases.} \\ 
In this case, $p(W) \leq -2$. If $v(X) =1$, then $p(F') \leq 4-2  -5 +1 \leq -2$, a contradiction.

If $v(X) =2$, then $p(F') \leq 4 -2 -7 +2 \leq -3$, a contradiction. 

Lastly, assume that $v(X) =3$. In this case, by a similar argument as in Case $5$,  $F' =G$, and thus $G$ is obtained from $W$ by unidentifying $x^{*}y$. Then $T^{3}(G) \geq T^{3}(W)-1$, and thus $p(G) = p(W) +5 -6 +1 \leq -2$, a contradiction. 

\end{proof}

Let $D_{3}(G)$ be the subgraph of $G$ induced by the vertices of degree $3$. Now we will build towards showing that $D_{3}(G)$ is acyclic, and further if a vertex of degree $3$ is in a triangle in $G$, then it is the only vertex of degree $3$ in this triangle. 

\begin{definition}
For an induced subgraph $R$ of $G$ where $R \neq G$, we say $u,v \in V(R)$ are an \textit{identifiable pair} if $R + uv$ is not $3$-colourable.
\end{definition}

\begin{lemma}
\label{noidentifiablepair}
If $R$ is an induced  subgraph of $G$ with $v(R) \leq v(G) -2$ and such that $G \setminus R$ is not a triangle of degree $3$ vertices, then $R$ has no identifiable pair. 
\end{lemma}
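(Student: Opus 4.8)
The plan is to argue by contradiction: suppose $R$ is an induced subgraph of $G$ with $R \neq G$, $v(R) \le v(G)-3$, $G \setminus R$ not a triangle of degree three vertices, and suppose $u,v \in V(R)$ form an identifiable pair, so $R + uv$ is not $3$-colourable. I would first use $4$-criticality of $G$ to produce a $3$-colouring $\phi$ of $R$ (since $R$ is a strict subgraph). Because $R+uv$ is not $3$-colourable, every $3$-colouring of $R$, and in particular $\phi$, gives $\phi(u) = \phi(v)$. Now form the quotient $G_\phi[R]$ by the colouring $\phi$; since $u$ and $v$ lie in the same colour class, this identification is recorded in the source. Let $W$ be a $4$-critical subgraph of $G_\phi[R]$, let $X$ be the source, and let $F'$ be the corresponding extension of $R$. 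The key point, analogous to the start of Lemma \ref{k4-e}, is that the class containing $u$ and $v$ must be "used" by $W$: since $R+uv$ is not $3$-colourable, $W$ cannot avoid the vertex $d$ obtained by identifying $u$ and $v$, so $d \in X$, hence $v(X) \ge 1$ and $W \not\subseteq G$.

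Next I would run the same case analysis on $W$ as in the proof of Lemma \ref{potentiallemma} and Lemma \ref{k4-e}, applying the potential-extension lemma (Lemma \ref{potentialextensionlemma}) together with Lemma \ref{potentiallemma}. The crucial input is that $R$ itself is an arbitrary subgraph with $v(R) < v(G)$, so Lemma \ref{potentiallemma} only guarantees $p(R) \ge 3$; this is exactly why we need the hypothesis $v(R) \le v(G) - 3$ and the hypothesis that $G \setminus R$ is not a triangle of degree three vertices — these are the two exceptional cases in Lemma \ref{potentiallemma} where $p(R)$ could be as small as $3$ rather than $4$. Under these hypotheses I claim $p(R) \ge 4$: if $G \setminus R$ is a single vertex of degree three then $v(G) - v(R) = 1 < 3$, contradicting $v(R) \le v(G) - 3$; if $G$ contains a kite then we have already derived a contradiction from Lemma \ref{k4-e}; and $G \setminus R$ being a triangle of degree three vertices is excluded by hypothesis. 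So $p(R) \ge 4$.

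Now for each possibility for $W$ I would bound $p(F')$ from above using $p(F') \le p(R) + p(W) - 5v(X) + 3e(X) + T^3(W) - T^3(W\setminus X)$. When $W = K_4$: if $v(X) = 1$ we get $p(F') \le p(R) - 4$, and the offending step is that we cannot immediately conclude $p(F') \le -1$; instead I would observe that $F' \neq G$ is impossible to rule out purely numerically, so I must argue structurally — but when $v(X) = 1$ and $W = K_4$, identifying $u,v$ and attaching $K_4$ forces $G$ to contain $K_4 - e$ (in fact a kite-like structure), contradicting Lemma \ref{k4-e}; this mirrors Case 1 of Lemma \ref{k4-e}. When $W$ has $p(W) \le -1$ (the cases $W = W_5$, $W \in \mathcal{B}$, $W$ $4$-Ore with $T^3 = 3$, or $p(W) \le -2$), the estimate $p(F') \le p(R) + p(W) - 5v(X) + 3e(X) + v(X)$ together with $e(X) \le \binom{v(X)}{2}$ and $p(R) \ge 4$ gives $p(F') \le -1$ or smaller in all subcases $v(X) \in \{1,2,3\}$; then Lemma \ref{potentiallemma} forces $F' = G$, and either $p(G) \le -2$ directly (contradiction), or the structural analysis of how $G$ is built from $W$ (as in Cases 5 and 6 of Lemma \ref{k4-e}) yields $p(G) \le -2$, again a contradiction. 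The case $W$ $4$-Ore with $T^3(W) = 2$ requires Lemma \ref{K4e} exactly as in the earlier proofs: either $W$ has two vertex-disjoint kites and $G$ contains a kite (contradicting Lemma \ref{k4-e}), or $W = M$ with the identified vertex $d$ being the degree-four vertex, in which case $T^3(W \setminus X) = 2$ and the potential drops enough. The main obstacle will be the bookkeeping in distinguishing $F' = G$ from $F' \subsetneq G$ in the borderline $K_4$ and Moser-spindle cases, where one must translate the (too weak) numerical bound into a forced structural conclusion — in particular re-deriving, from the hypothesis $v(R) \le v(G) - 3$ and the absence of a kite, that $G$ would have to be small, $3$-regular, or $4$-Ore, each of which is already excluded.
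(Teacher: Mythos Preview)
Your approach via the quotient $G_\phi[R]$ and the potential-extension lemma does not work. The potential-extension lemma gives an upper bound of the form $p(F') \le p(R) + p(W) - (\text{terms})$, and for this to yield anything you need an \emph{upper} bound on $p(R)$. You establish $p(R) \ge 4$, but that is a lower bound; since $R$ is an arbitrary induced subgraph its potential can be arbitrarily large, so the inequality $p(F') \le p(R) + \cdots$ tells you nothing. (Contrast this with Lemma~\ref{k4-e}, where $F$ is a specific $K_4 - e$ with $p(F) = 4$ exactly, so the extension lemma is genuinely useful.) A second, smaller issue: your claim that the source vertex $d$ corresponding to the colour class of $u,v$ must lie in $W$ is not justified---the fact that $R + uv$ is not $3$-colourable does not force any particular $4$-critical subgraph of $G_\phi[R]$ to use $d$.

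The paper's argument is much shorter and bypasses the quotient entirely. Since $R + xy$ is not $3$-colourable it contains a $4$-critical subgraph $W$, and $xy \in E(W)$ (else $W \subset G$ would be a proper $4$-critical subgraph). Now $W - xy$ is an honest subgraph of $G$ with $V(W - xy) \subseteq V(R)$, so the hypotheses on $R$ (together with Lemma~\ref{k4-e} to rule out kites) let you apply Lemma~\ref{potentiallemma} directly to $W - xy$, giving $p(W - xy) \ge 4$. Since deleting one edge raises the potential by at most $4$ (gain $3$ for the edge, lose at most one triangle), this forces $p(W) \ge 0$, so $W$ is either $K_4$ or $4$-Ore with $T^3(W) = 2$. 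In either case $W - xy \subseteq G$ contains a $K_4 - e$ subgraph (for the $4$-Ore case use Lemma~\ref{K4e}), contradicting Lemma~\ref{k4-e}. The missing idea in your attempt is precisely this: work inside $R + xy$ rather than inside a quotient, so that the $4$-critical subgraph $W$ sits (minus the one artificial edge) inside $G$, and Lemma~\ref{potentiallemma} applies to $W - xy$ rather than to $R$.
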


\begin{proof}
Suppose not. Let $x$ and $y$ be an identifiable pair in $R$, and consider $R + xy$. As $R + xy$ is not $3$-colourable by definition, there exists a $4$-critical subgraph $W$ of $R + xy$. Note that $xy \in E(W)$, since $G$ does not contain a proper subgraph that is $4$-critical. Moreover, since $T^3(W-xy) \geq T^3(W)-1$, we have that $p(W-xy) \leq p(W) +4$. The hypotheses of this lemma as well as Lemma \ref{potentiallemma} and Lemma \ref{k4-e} imply that $p(W-xy) \geq 4$. If $p(W) \leq -1$, then we obtain a contradiction. If $W = K_{4}$, then $G$ has a  $K_{4}-e$ subgraph, contradicting Lemma \ref{k4-e}. If $W$ is $4$-Ore with $T^{3}(G) = 2$, then by Lemma \ref{K4e}, $G$ contains a $K_{4}-e$ subgraph, again contradicting Lemma \ref{k4-e}. For all other $W$, we have $p(W) \leq -1$, and thus we get a contradiction. 
\end{proof}

For a subgraph $H$, let $N(H)$ be the set of vertices not in $H$ which have a neighbour in $H$. We will need the following well-known consequence of the Gallai-Tree Theorem \cite{GallaiForests}.

\begin{thm}
\label{Independentsettheorem}
If $C$ is a cycle of degree $3$ vertices in a $4$-critical graph, then $v(C)$ is odd, $N(C)$ induces an independent set, and in any $3$-colouring of $G-C$, all vertices in $N(C)$ receive the same colour. 
\end{thm}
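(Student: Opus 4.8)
The plan is to recast the statement as a list-colouring problem on $C$. Write $C = v_1v_2\cdots v_nv_1$; since $C$ is (chordless, i.e.\ induced) and each $v_i$ has degree three in $G$, the vertex $v_i$ has exactly one neighbour $u_i$ outside $C$, and $N(C) = \{u_1,\dots,u_n\}$ as a set. First observe that $V(C) \neq V(G)$: otherwise $G$ would equal the chordless cycle $C$, which is not $4$-critical. Hence $G-C$ is a proper induced subgraph of $G$, so by $4$-criticality it has a proper $3$-colouring; fix any such colouring $\phi$. For each $i$ set $L_i = \{1,2,3\}\setminus\{\phi(u_i)\}$, a list of size two. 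Any proper colouring of $C$ in which $v_i$ is assigned a colour from $L_i$ for every $i$ would combine with $\phi$ into a proper $3$-colouring of $G$, which does not exist. Therefore $C$ admits no proper colouring from the lists $(L_i)_{i}$.

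The combinatorial heart of the argument is the classical fact (a special case of the Erd\H{o}s--Rubin--Taylor characterisation of $2$-choosable graphs) that a cycle with all lists of size two has a proper list-colouring unless the cycle is odd and all its lists are equal. I would prove this directly: if not all lists coincide, pick an edge $v_1v_2$ with $L_1 \neq L_2$ and a colour $a \in L_1 \setminus L_2$, colour $v_1$ with $a$, and then colour $v_n, v_{n-1}, \dots, v_2$ greedily in that order; when $v_j$ is reached it has at most one already-coloured neighbour (for $v_2$ the constraint coming from $v_1$ is vacuous since $a \notin L_2$), so a colour from its size-two list is always available. If instead all lists equal $\{a,b\}$ and $n$ is even, a proper $2$-colouring of $C$ works; if $n$ is odd, no proper $2$-colouring exists. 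Applying this dichotomy to $(L_i)_i$: since $C$ is not list-colourable, $n$ must be odd and all $L_i$ must coincide, hence all $\phi(u_i)$ coincide. This already yields that $v(C)$ is odd, and that all vertices of $N(C)$ receive the same colour under $\phi$; as $\phi$ was an arbitrary $3$-colouring of $G-C$, the latter holds for every $3$-colouring of $G-C$.

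It remains to show $N(C)$ is independent. If $u_i$ and $u_j$ were distinct and adjacent in $G$, then in any $3$-colouring $\phi$ of $G-C$ (one exists, as shown above) we would need $\phi(u_i) \neq \phi(u_j)$ by the edge $u_iu_j$, contradicting the conclusion of the previous paragraph that all of $N(C)$ receives a single colour. Hence no such edge exists.

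I expect the only genuine content to be the size-two list-colouring lemma for cycles, which is standard and whose proof is the short greedy argument above (alternatively one can invoke the Gallai-tree theorem of \cite{GallaiForests} directly for the parity statement); everything else is bookkeeping. The one point needing care is that each $v_i$ must have a \emph{size-two} list, which requires $C$ to be chordless — automatic for an induced cycle, the only case used — and I would flag this explicitly.
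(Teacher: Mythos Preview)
Your argument is correct. The paper does not actually supply a proof of this statement; it simply records it as a well-known consequence of the Gallai--Tree theorem and cites \cite{GallaiForests}. Your list-colouring approach is precisely the specialised form of that argument for a single cycle block, and the greedy colouring lemma you give (a cycle with size-two lists is colourable unless it is odd with all lists equal) is the standard way to see it directly. The one caveat you already flag---that $C$ must be chordless so that every $v_i$ has its third neighbour outside $C$ and hence a genuine size-two list---is the only delicate point, and it is indeed satisfied in every application the paper makes of the theorem (one may always pass to a shortest, hence induced, cycle in $D_3(G)$).
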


\begin{proof}
Recall that every cycle $C$ admits an $L$-colouring from a $2$-list-assignment $L$ unless $C$ is odd and the lists of each vertex are the same. Thus unless the conditions of the theorem occur, we can extend any $3$-colouring of $G-C$ to $C$, contradicting $4$-criticality.  
\end{proof}

\begin{cor}
All cycles in $D_{3}(G)$ are triangles.
\end{cor}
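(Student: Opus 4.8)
The plan is to combine Theorem \ref{Independentsettheorem} with the no-identifiable-pair machinery of Lemma \ref{noidentifiablepair}. Suppose for contradiction that $D_3(G)$ contains a cycle $C$ that is not a triangle, and choose $C$ to be a shortest such cycle; then $v(C) \geq 4$, and since $C$ is a shortest cycle in $D_3(G)$ it is induced in $D_3(G)$, hence induced in $G$. Let $R = G - C$ (equivalently, the subgraph of $G$ induced by $V(G) \setminus V(C)$). Since $G$ is $3$-connected (Observation \ref{notorecomp}) and $v(C) \geq 4$, we have $v(R) \leq v(G) - 4 \leq v(G) - 3$, and $G \setminus R = C$ is certainly not a triangle of degree three vertices, so Lemma \ref{noidentifiablepair} applies to $R$: $R$ has no identifiable pair.

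Next I would pick two vertices $x, y$ of $C$ to contradict this. By Theorem \ref{Independentsettheorem}, in any $3$-colouring of $R = G - C$, all vertices of $N(C)$ receive the same colour, say colour $1$; and $N(C)$ is an independent set. The idea is that the colouring of $R$ is then so rigid that we can find $x, y \in V(C)$ (in fact $x,y \in N(C)$, after checking $N(C) \subseteq V(R)$, which holds because $C$ is an induced cycle of degree-three vertices so every neighbour of a vertex of $C$ outside $C$ lies in $R$) whose only obstruction to getting different colours is already forced, i.e. $R + xy$ is not $3$-colourable. Concretely: since $G$ itself is not $3$-colourable but $G - C$ is, and every proper $3$-colouring of $G - C$ forces all of $N(C)$ to colour $1$, the "interface" graph between $C$ and $R$ must fail to extend; because $C$ is an odd... wait — here $v(C)$ would be forced odd by the theorem, but we assumed $C$ is not a triangle, so $v(C) \geq 5$ is odd. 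The cycle $C$ of length $\geq 5$ with all of $N(C)$ precoloured $1$ is $3$-colourable as a cycle with that precolouring constraint on external neighbours (an odd cycle is $2$-list-colourable off one vertex, and here every vertex of $C$ has at most one external neighbour, all of the same colour $1$), which would extend to a $3$-colouring of $G$ — contradiction — **unless** some vertex of $C$ has no external neighbour or the wheel-like structure forces failure. This tension is exactly what I would exploit: the failure to extend pins down a pair of vertices of $R$ (two external neighbours of consecutive or near-consecutive vertices of $C$) that cannot simultaneously avoid colour $1$ and cannot both be colour $1$ either, hence form an identifiable pair in $R$.

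The cleaner route, which I would actually write up, is: take any $3$-colouring $\phi$ of $G - C$; by Theorem \ref{Independentsettheorem} all of $N(C)$ gets one colour, so the "reduced" precolouring on $C$ is: every vertex of $C$ with an external neighbour is forbidden a single common colour. Since $v(C)$ is odd and $\geq 5$, such a precoloured cycle is $3$-colourable (choose the forbidden colour to be $1$; then properly $2$-colour... an odd cycle is not $2$-colourable, but we have a third colour available at the one vertex, if any, that has no external neighbour, or we just observe a $5$-cycle with all vertices list $\{2,3\}$ except one is $3$-choosable). Thus $\phi$ extends to $G$, contradicting that $G$ is not $3$-colourable. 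The only gap is when **every** vertex of $C$ has an external neighbour and $v(C)$ is odd — then $C$ with all lists $\{2,3\}$ is a properly-$2$-list-coloured odd cycle, impossible. So in that case no $3$-colouring of $G - C$ extends, and the right way to derive a contradiction is that this forces $N(C)$ to be non-extendable, which (since $G$ is $4$-critical and $G-C$ is $3$-colourable) means there exist $u, v \in N(C)$ with $u, v$ at distance $2$ along $C$ through a common neighbour $c \in V(C)$, and identifying $u$ with $v$ destroys $3$-colourability of $G - C$ — i.e. $\{u,v\}$ is an identifiable pair of $R$, contradicting Lemma \ref{noidentifiablepair}.

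The main obstacle I anticipate is making the last step rigorous: extracting a genuine identifiable pair in $R = G - C$ from the non-extendability of colourings across the odd cycle $C$. The key observation to push through is that for a degree-three vertex $c$ on $C$, its unique external neighbour $n(c)$ lies in $R$, and as $c$ ranges over $C$ the map $c \mapsto n(c)$ together with the parity obstruction of the odd cycle forces two of these external neighbours $n(c_i), n(c_j)$ to be constrained to differ in every $3$-colouring of $R$ (because otherwise we could $2$-colour $C$ around them) while simultaneously Theorem \ref{Independentsettheorem} forces them equal — this contradiction is precisely an identifiable pair. One must be a little careful that $n(c_i) \neq n(c_j)$, which follows because $N(C)$ is independent (so a vertex of $N(C)$ adjacent to two vertices of $C$ would create a shorter cycle or a triangle, handled by the minimality of $C$ and triangle-freeness considerations already available) — and that $R \neq G$ with $v(R) \leq v(G) - 3$, which we checked. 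Once the identifiable pair is produced, Lemma \ref{noidentifiablepair} closes the argument immediately.
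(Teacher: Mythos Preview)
Your high-level plan --- combine Theorem \ref{Independentsettheorem} with Lemma \ref{noidentifiablepair} --- is exactly the paper's approach, but you have made the argument far more complicated than it needs to be and in doing so have introduced both a genuine gap and some confusion.

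The key point you are missing is that Theorem \ref{Independentsettheorem} hands you the identifiable pair \emph{directly}. It says that in every $3$-colouring of $G-C$, all vertices of $N(C)$ receive the same colour. So if $|N(C)| \geq 2$, pick any two distinct $u,v \in N(C)$; then $(G-C)+uv$ is not $3$-colourable, which is precisely the definition of an identifiable pair in $R = G-C$. Since $v(C) \geq 5$ (odd and not a triangle), Lemma \ref{noidentifiablepair} applies and you are done. There is no need for any extension argument, list-colouring of the cycle, or parity obstruction; your entire ``cleaner route'' paragraph and the discussion of forcing two external neighbours to \emph{differ} is unnecessary, and the last part is in fact confused --- an identifiable pair is one that must always \emph{agree}, not differ.

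The genuine gap in your proposal is the case $|N(C)| = 1$, which you gesture at (``wheel-like structure'') but never handle. If every vertex of $C$ has the same external neighbour, then $G$ is an odd wheel $W_{v(C)}$ with $v(C) \geq 5$ odd. The paper disposes of this by direct potential computation: $W_5$ is listed as an exception in Theorem \ref{maintheorem}, and for $v(C) \geq 7$ one has $p(G) = 5(v(C)+1) - 6v(C) - 1 = 4 - v(C) \leq -3$, so $G$ is not a counterexample. Without this case your argument is incomplete, since your attempted distinctness check ``$n(c_i) \neq n(c_j)$'' can genuinely fail.
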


\begin{proof}
Let $C$ be a cycle in $D_{3}(G)$ where $v(C) \geq 5$. If $|N(C)| =1$, then since $G$ has minimum degree $3$, it follows that $G$ is isomorphic to an odd wheel. If $G = W_{5}$, then $G$ is not a counterexample to Theorem \ref{maintheorem}. So we may assume that $v(C) \geq 7$. Note that $v(G) = v(C) +1$, and $e(G) = 2v(C)$. So $p(G) = 5(v(C)+1) -6v(C) -1 = -v(C)+4 \leq -3$ since $v(C) \geq 7$.  Thus $|N(C)| \geq 2$. Then by Theorem \ref{Independentsettheorem}, any pair of vertices in $N(C)$ are an identifiable pair in $G$. This contradicts Lemma \ref{noidentifiablepair} as $v(G-C) < v(G) -3$. 

\end{proof}

\begin{cor}
\label{triangledegrees}
If $T$ is a triangle in $G$, then $V(T)$ does not contain exactly two vertices of degree $3$.
\end{cor}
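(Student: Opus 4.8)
The plan is to assume, for contradiction, that $G$ contains a triangle $T$ with $V(T)=\{x,y,z\}$, $\deg_G(x)=\deg_G(y)=3$, and $\deg_G(z)\ge 4$; write $x'$ for the unique neighbour of $x$ outside $T$ and $y'$ for the unique neighbour of $y$ outside $T$. First I would record the local structure forced by Lemma \ref{k4-e} (together with the fact that $G$, being $4$-critical with $G\neq K_{4}$, contains no $K_{4}$ subgraph): the vertices $x',y',z$ are pairwise distinct, $x'$ is adjacent to neither $y$ nor $z$, and $y'$ is adjacent to neither $x$ nor $z$. Indeed any coincidence or extra edge among these vertices (say $x'=y'$, or $x'\sim z$, or $x'\sim y$) would create a copy of $K_{4}$ or $K_{4}-e$ on a subset of $\{x,y,z,x',y'\}$.

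The crux is the claim that $G^{+}:=(G-\{x,y\})+x'y'$ is not $3$-colourable. Suppose $c$ were a $3$-colouring of $G^{+}$. Its restriction to $G-\{x,y\}$ is a proper $3$-colouring of a subgraph of $G$, and since $G$ is not $3$-colourable this restriction does not extend to $x$ and $y$. Examining $N(x)=\{y,z,x'\}$ and $N(y)=\{x,z,y'\}$: if $c(x')=c(z)$ (and symmetrically if $c(y')=c(z)$) one checks directly that the colouring does extend, a contradiction; so $c(x')\neq c(z)$ and $c(y')\neq c(z)$. But then the colour of $x$ is forced to be the one avoiding $\{c(z),c(x')\}$ and the colour of $y$ the one avoiding $\{c(z),c(y')\}$, and non-extendability forces these two colours to coincide, which happens precisely when $c(x')=c(y')$ — contradicting that $x'y'\in E(G^{+})$. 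This proves the claim, and applying it to $G-\{x,y\}$, which (being a proper subgraph of the $4$-critical $G$) is $3$-colourable, also shows $x'\not\sim y'$ in $G$, so $G^{+}$ genuinely adds a new edge.

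Next I would split on whether $R+x'y'$ is $3$-colourable, where $R:=G-\{x,y,z\}$. If it is not, then $\{x',y'\}$ is an identifiable pair in $R$; since $v(R)=v(G)-3$ and $G\setminus R$ is the triangle $T$, which is not a triangle of degree-three vertices (as $\deg z\ge 4$), this contradicts Lemma \ref{noidentifiablepair}. If $R+x'y'$ is $3$-colourable, let $W$ be a $4$-critical subgraph of $G^{+}$. Then $x'y'\in E(W)$ (otherwise $W$ is a non-$3$-colourable proper subgraph of $G$) and $z\in V(W)$ (otherwise $W\subseteq R+x'y'$, which is $3$-colourable). Moreover $v(W)\le v(G)-2<v(G)$, so Theorem \ref{maintheorem} holds for $W$, while $W-x'y'$ is a proper subgraph of $G$, so $p(W-x'y')\ge 3$ by Lemma \ref{potentiallemma}. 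Since deleting an edge drops $T^{3}$ by at most one, $p(W-x'y')=p(W)+3+\bigl(T^{3}(W)-T^{3}(W-x'y')\bigr)\le p(W)+4$, so $p(W)\ge -1$, with $T^{3}(W-x'y')=T^{3}(W)-1$ whenever $p(W)=-1$.

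Finally I would run the casework on $W$. If $W=K_{4}$, or $W$ is $4$-Ore with $T^{3}(W)=2$ (in which case Lemma \ref{K4e} gives two edge-disjoint kites, at least one avoiding the edge $x'y'$), then $W-x'y'$ contains a $K_{4}-e$ subgraph and lies in $G$, contradicting Lemma \ref{k4-e}. If $W=W_{5}$ then $p(W)=-1$ would force $T^{3}(W_{5}-x'y')=0$, which is impossible since deleting any single edge of $W_{5}$ leaves a triangle. In the remaining cases ($W\in\mathcal{B}$, or $W$ is $4$-Ore with $T^{3}(W)=3$; both have $p(W)=-1$) I would reinsert $x$ and $y$: let $G'$ be obtained from $W-x'y'$ by adding two vertices $x,y$ with $x$ adjacent to $x',y,z$ and $y$ adjacent to $y',x,z$. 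Then $G'\subseteq G$ (all these edges lie in $G$, and $x',y',z\in V(W)$), $v(G')=v(W)+2$, $e(G')=e(W)+4$, and the only triangle of $G'$ not already present in $W-x'y'$ is $\{x,y,z\}$, so $T^{3}(W)-1=T^{3}(W-x'y')\le T^{3}(G')\le T^{3}(W)$. A short computation using $p(W)=-1$ then gives $p(G')=T^{3}(W)-3-T^{3}(G')\le -2$, which contradicts Lemma \ref{potentiallemma} when $G'\subsetneq G$ (the subcase $v(G')=v(G)$ being excluded because a proper spanning subgraph would satisfy $p(G')\ge p(G)+3\ge 2$) and contradicts $p(G)\ge -1$ when $G'=G$. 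I expect the main obstacle to be the elementary but delicate colouring argument of the second paragraph — one must dispose of the cases $c(x')=c(z)$ and $c(y')=c(z)$ before the forcing step applies — together with the routine but fiddly tracking of whether each extension is all of $G$ or a proper subgraph, which pervades potential-method arguments.
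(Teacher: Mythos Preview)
Your proof is correct, but it takes a noticeably longer route than the paper's. The paper works directly with $G^{+}=(G-\{x,y\})+x'y'$ and its $4$-critical subgraph $W$, and instead of settling for $p(W-x'y')\ge 3$ it checks the exceptional clauses of Lemma~\ref{potentiallemma}: the complement $G\setminus(W-x'y')$ contains both $x$ and $y$, so it is not a single degree-three vertex, and if it were a triangle of degree-three vertices then that triangle together with $\{x,y,z\}$ would produce a $K_{4}-e$ in $G$. Since $G$ has no kite, this yields $p(W-x'y')\ge 4$, whence $p(W)\ge 0$, and only $W=K_{4}$ or $W$ $4$-Ore with $T^{3}(W)=2$ survive; both force a $K_{4}-e$ in $G$.

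Your argument, by contrast, uses only $p(W-x'y')\ge 3$, which leaves the $p(W)=-1$ cases alive. To dispatch them you introduce the auxiliary split on whether $R+x'y'$ is $3$-colourable (needed solely to pin $z\in V(W)$), and then in the $\mathcal{B}$ and $4$-Ore $T^{3}=3$ cases you reinsert $x,y$ and compute $p(G')\le -2$. This is sound but buys nothing: the same two-line check of the Lemma~\ref{potentiallemma} exceptions that the paper performs would have eliminated the entire $p(W)=-1$ branch, and with it the case split on $R+x'y'$, the $W_{5}$ triangle count, and the reinsertion step.
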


\begin{proof}
Suppose not. Let $x,y$ and $z$ induce a triangle where $x$ and $y$ are vertices of degree $3$ and $z$ has degree at least four. Let $x'$ and $y'$ be the unique other neighbours of $x$ and $y$ respectively. Note $x' \neq y'$ as otherwise $G$ contains a subgraph isomorphic to $K_{4}-e$, contradicting Lemma \ref{k4-e}. 

If $x'y' \in E(G)$, then any  $3$-colouring of $G-\{x,y\}$ extends to a $3$-colouring of $G$, a contradiction. In particular, every $3$-colouring of $G-\{x,y\}$ gives $x'$ and $y'$ the same colour and hence $x',y'$ are an identifiable pair in $G-\{x,y\}$. 

Therefore $G-\{x,y\} + \{x'y'\}$ contains a 4-critical subgraph, $W$, containing $x'y'$. Moreover, $p(W-x'y') \leq p(W) +4$. By the same argument as in Lemma \ref{noidentifiablepair}, it suffices to show $ H := G \setminus (W -x'y')$ is not a triangle of degree $3$ vertices, or a single vertex of degree $3$. Notice that $H$ is not a vertex of degree $3$, as both $x$ and $y$ are in $V(H)$. We claim $H$ is not a triangle of degree $3$ vertices. If so, then $z \not \in V(H)$, since $\deg(z) \geq 4$. But since $x,y \in V(H)$, it follows that $x$ and $y$ lie in a triangle of degree $3$ vertices. But then as $x,y,z$ induce a triangle, $G$ contains a $K_{4}-e$ subgraph, again contradicting Lemma \ref{k4-e}. 
\end{proof}

\begin{definition}
An \textit{$M$-gadget} is a graph obtained from $M$ by first splitting the vertex $v$ of degree $4$ into two vertices $v_{1}$ and $v_{2}$ such that there is no $K_{4}-e$ in the resulting graph, $N(v_{1}) \cap N(v_{2}) = \emptyset$, and $N(v) = N(v_{1}) \cup N(v_{2})$;  and after this, adding a vertex $v'$ adjacent to only $v_{1}$ and $v_{2}$. We call $v'$ the \textit{end} of the $M$-gadget.
\end{definition}

Note that an $M$-gadget is not necessarily an induced subgraph; this is a property we will exploit later on.

\begin{figure}
\begin{center}
\begin{tikzpicture}
\node[blackvertexv2] at (0,0) (u1) [label = below:$x$] {};
\node[blackvertexv2] at (2,0) (u2) [label = below:$y$]{};
\node[blackvertexv2] at (4,0) (u3) [label = below:$z$]{};
\draw[thick,black] (u1)--(u2)--(u3);
\node[blackvertexv2] at (-2,1.5) (v2) {};
\node[blackvertexv2] at (-1,1.5) (v3) {};
\node[blackvertexv2] at (-1.25,2.5) (v4) {};
\draw[thick,black] (v2)--(v3)--(v4)--(v2);
\draw[thick,black] (v4)--(v3);

\node[blackvertexv2] at (1,1.5) (v5) {};
\node[blackvertexv2] at (0,1.5) (v6) {};
\node[blackvertexv2] at (.25,2.5) (v7) {};
\draw[thick,black] (v5)--(v6)--(v7)--(v5);

\draw[thick,black] (v6)--(v7);
\draw[thick,black] (v7)--(v4);

\node[blackvertexv2] at (-.85,.75) (v8)[label= left:$x'$] {};
\node[blackvertexv2] at (.45,.75) (v9) [label= right:$x''$] {}; 

\draw[thick,black] (u1)--(v8);
\draw[thick,black] (u1)--(v9);
\draw[thick,black] (v8)--(v2);
\draw[thick,black] (v8)--(v6);
\draw[thick,black] (v9)--(v3);
\draw[thick,black] (v9)--(v5);
\end{tikzpicture}
\end{center}
\caption{An example of the $M$-gadget outcome in Lemma \ref{identificationlemma}}
\label{Mgadgetfig}
\end{figure}
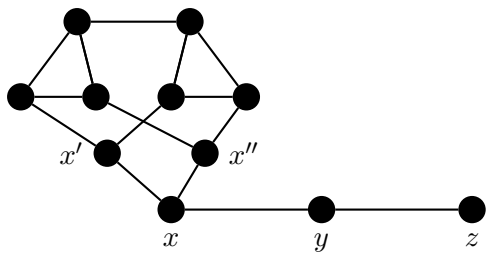

\begin{lemma}
\label{identificationlemma}

Let $C$ be a component of $D_{3}(G)$ with $v(C) \geq 3$. Let $x,y,z \in V(C)$ be such that $xy,yz \in E(G)$ and $xz \not \in E(G)$.  Let $x',x''$ be two neighbours of $x$ which are not $y$. Either $x'x'' \in E(G)$, or $x,x',x''$ lie in an $M$-gadget with end $x$, and this $M$-gadget avoids both $y$ and $z$.
\end{lemma}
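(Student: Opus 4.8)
The plan is to assume $x'x''\notin E(G)$ (otherwise the first alternative holds) and to apply the potential method to the quotient identifying $x'$ and $x''$. Since $G$ is $4$-critical, $G-x$ is $3$-colourable, and in any $3$-colouring of $G-x$ the neighbours $y,x',x''$ of $x$ receive all three colours; in particular $x'$ and $x''$ always receive distinct colours, so $H:=(G-x)/\{x',x''\}$ is not $3$-colourable and contains a $4$-critical subgraph $W$. Writing $w$ for the vertex of $H$ obtained by identifying $x'$ and $x''$, we have $w\in V(W)$ (else $W\subseteq G$ would be a proper $4$-critical subgraph) and $x\notin V(W)$ (as $x$ has degree $2$ in $H$). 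With $F=G[\{x',x''\}]$ (so $p(F)=10$) and $\phi$ the colouring of $F$ identifying $x'$ and $x''$, the source is $X=\{w\}$ and the extension is $F'=G[(V(W)\setminus\{w\})\cup\{x',x''\}]$, a proper induced subgraph of $G$. The potential-extension lemma (Lemma~\ref{potentialextensionlemma}) gives $p(F')\le p(F)+p(W)-5+\bigl(T^3(W)-T^3(W-w)\bigr)\le p(W)+6$, while $p(F')\ge 3$ by Lemma~\ref{potentiallemma}; hence $p(W)\ge -3$. Since $v(W)\le v(G)-2$, minimality of $G$ and Theorem~\ref{maintheorem} force $W$ to be one of $K_4$, a $4$-Ore graph with $T^3=2$, $W_5$, a member of $\mathcal B$, or a $4$-Ore graph with $T^3=3$, or else $p(W)\in\{-2,-3\}$.

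The key point driving the case analysis is that the edges of $W$ not incident to $w$ are edges of $G-x$, so any $K_4-e$ subgraph of $W$ avoiding $w$ is a $K_4-e$ subgraph of $G$, contradicting Lemma~\ref{k4-e}. This rules out $W=W_5$ when $w$ is a rim vertex (as $W_5$ minus a rim vertex contains a $K_4-e$) and, via Lemma~\ref{K4e}, every $4$-Ore $W$ with $T^3=2$ other than $M$ (two vertex-disjoint kites, at least one avoiding $w$); for $W=M$ it forces $w$ to be the unique degree-four vertex (the only one in both kites). The case $W=K_4$ is handled by a variant: one of $x',x''$ is adjacent in $G$ to at least two of the three neighbours of $w$, which form a triangle, producing a $K_4-e$ in $G$. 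For the remaining possibilities — $W=M$ with $w$ its degree-four vertex, $W=W_5$ with $w$ the hub, $W\in\mathcal B$, $W$ $4$-Ore with $T^3=3$, and the low-potential cases $p(W)\in\{-2,-3\}$ — I would apply the relevant splitting lemma (Lemmas~\ref{splittinglemma}, \ref{4Oresplit3triangle}, \ref{T8splits}) to the split of $w$ into $x',x''$, use the foundational-edge lemmas (Lemmas~\ref{foundationaledgesin4Ore}, \ref{foundationaledgesinB}), the fact that a vertex-critical graph has no non-adjacent twins, and sharper estimates for $p(F')$, to rule out every case except $W=M$ with $w$ its degree-four vertex. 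In that surviving case, un-identifying $w$ realises an Ore split of the apex of $M$ inside $G-x$; since $G$ has no $K_4-e$, this must be the split appearing in the definition of the $M$-gadget, and adjoining $x$, adjacent to $x'=v_1$ and $x''=v_2$, exhibits an $M$-gadget in $G$ with end $x$.

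Finally I would verify that this $M$-gadget contains neither $y$ nor $z$. Deleting the end from an $M$-gadget leaves a copy of $M$ with its apex split, whose only degree-two vertices are $v_1=x'$ and $v_2=x''$; every other vertex of the $M$-gadget has degree $3$ inside it. Now $\deg_{G-x}(y)=2$ and $y\notin\{x',x''\}$, and the edge $xy$ is not an edge of the gadget (the end is adjacent only to $x'$ and $x''$), so $y$ cannot lie in the gadget. Since $y$ and $z$ do not lie in a triangle of degree-three vertices, $z\not\sim x$ (else $\{x,y,z\}$ would be such a triangle), whence $\deg_{G-x}(z)=3$ and $z\notin\{x',x''\}$; but $z\sim y$ and $y$ is not in the gadget, so at most two of the three neighbours of $z$ lie in the gadget, and as $z\ne x$ this is incompatible with $z$ being a vertex of the gadget. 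Hence the gadget avoids $y$ and $z$, completing the proof.

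The hardest part is the finer case analysis in the second paragraph: when $\deg_G(x')$ or $\deg_G(x'')$ exceeds $3$, the vertex $w$ may have large degree in $H$, so one must track carefully which neighbours of $w$ belong to $W$, determine whether un-identifying $w$ is a genuine Ore split, invoke the correct splitting lemma, and convert its degenerate conclusion (a degree-one part incident to a foundational edge, equivalently the spar of a kite) into a contradiction — typically a forbidden $K_4-e$ in $G$, a pair of non-adjacent twins, or a subgraph of $G$ with potential below $3$; the low-potential cases $p(W)\in\{-2,-3\}$ likewise have to be excluded directly via Lemma~\ref{potentiallemma}.
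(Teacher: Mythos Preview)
Your overall plan matches the paper's: identify $x'$ and $x''$, find a $4$-critical subgraph $W$ of the quotient, pull it back to a proper induced subgraph of $G$, and bound potentials. Two places where your execution diverges from the paper's create real gaps.

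\textbf{(1) Include $x$ in the pullback.} The paper pulls $W'$ back to $G[(V(W')\setminus\{x'''\})\cup\{x,x',x''\}]$; you omit $x$ and work with $F'$. First, you should record the cascading degree-$2$ argument explicitly (the paper does): $x$ has degree $2$ in the quotient, so $x\notin W$; then $y$ has degree $2$, so $y\notin W$; then $z\notin W$. Now adding $x$ to $F'$ contributes one vertex and two edges (to $x',x''$; the edge $xy$ is absent since $y\notin F'$), lowering the potential by $1$. Since $G\setminus(F'\cup\{x\})$ still contains $y,z$ (by hypothesis not in a triangle of degree-$3$ vertices) and $G$ has no kite (Lemma~\ref{k4-e}), Lemma~\ref{potentiallemma} gives $p(F'\cup\{x\})\ge 4$, hence $p(W)\ge -1$. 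This is exactly the paper's bound and eliminates your residual cases $p(W)\in\{-2,-3\}$ outright; your assertion that these are ``excluded directly via Lemma~\ref{potentiallemma}'' does not go through without this adjustment (for $p(W)=-2$ you only get $p(F')\le 4$, matching the lower bound).

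\textbf{(2) The foundational-edge case.} This is the crux, and your proposed tools (forbidden $K_4-e$, non-adjacent twins, low-potential subgraph) do not finish it. When the split of $w$ yields a degree-$1$ part $x'$ incident to a foundational edge $x'y'$ (the spar of a kite in $W$ with non-spar vertices $p,q$), the set $\{x'',y',p,q\}$ induces only a $4$-cycle in $G$, not a $K_4-e$; no twin relation is forced; and the pullback's potential sits at the threshold, not below it. The paper instead invokes Lemma~\ref{noidentifiablepair}: with $W'':=G[(V(W)\setminus\{w\})\cup\{x''\}]$, any $3$-colouring of $W''$ must give $x''$ and $y'$ the same colour (otherwise it extends to a $3$-colouring of $W$), so $\{x'',y'\}$ is an identifiable pair; since $x,x',y,z\notin V(W'')$, Lemma~\ref{noidentifiablepair} yields the contradiction. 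You need to cite this lemma explicitly here.

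One small correction: for $W=T_8\in\mathcal B$ the unique foundational edge $u_1u_2$ is \emph{not} the spar of a kite (Lemma~\ref{foundationaledgesinB} only asserts this for $G\neq T_8$), so your parenthetical ``equivalently the spar of a kite'' is false in that case; the identifiable-pair argument above still applies, however.
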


\begin{proof}
Suppose not. Then $x'x'' \not \in E(G)$. Let $G'$ be the graph obtained from $G$ by identifying $x'$ and $x''$ to a new vertex $x'''$. Note neither $x'$ nor $x''$ is $z$, since $xz \not \in E(G)$. Moreover, if there exists a $3$-colouring of $G'$, then this $3$-colouring readily extends to $G$. Hence $G'$ is not $3$-colourable. Let $W'$ be a $4$-critical subgraph of $G'$.  In $G'$, let $C'$ be the induced subgraph containing the vertices of $C-x'-x''$. Observe $C'$ is $2$-degenerate (that is, all subgraphs of $C'$ have a vertex of degree at most $2$), since $x$ now has degree $1$ in $C'$,  and all other vertices in $C'$ have degree at most three. Since $W'$ is $4$-critical, it has minimum degree at least $3$ and so this implies that no vertex of $C$ is in $W'$. Since $G$ does not contain a proper 4-critical subgraph, we have that $W' \not \subseteq G$ and thus $x''' \in V(W')$.

Let $W$ be the subgraph of $G$ obtained by taking the subgraph induced by $V(W')-x'''$, and adding vertices $x, x'$ and $x''$ and edges $xx'$, $xx''$, as well as any edge incident to a vertex in $W'-x'''$ and either of $x'$ or $x''$. If $T^{3}(W) = T^{3}(W')-1$, we have $p(W) \leq p(W') +10 -6 +1 = p(W') +5$, and otherwise $p(W) \leq p(W') +4$. 

We claim $G \setminus W$ is not a cycle of degree $3$ vertices. To see this, suppose not: then since $y$ and $z$ are not contained in $W$, it follows that there exists a vertex $w \in V(C)$ such that $yzwy$ is a triangle of degree 3 vertices. Let $w'$ be the neighbour of $w$ that is not $z$ or $y$, and let $z'$ be the neighbour of $z$ that is not $w$ or $y$. Note that $z', w',$ and $x$ are distinct vertices by Lemma \ref{k4-e}. Let $R = G-\{x,y,z,w\}$, and let $\phi$ be a 3-colouring of $R$. Note that since $x$ has only two neighbours in $R$, it follows that $\phi$ extends to $R+x$. If $\phi(x), \phi(z')$, and $\phi(w')$ are not all equal, then $\phi$ extends to a 3-colouring of $G$ by Theorem \ref{triangledegrees}, a contradiction. Thus $\phi(x) = \phi(z') = \phi(w')$; and since $\phi$ is an arbitrary 3-colouring of $R$, we have that in every 3-colouring of $R$, both $w'$ and $z'$ receive the same colour. Since $v(R) = v(G)-4$, this contradicts Lemma \ref{noidentifiablepair}. Thus $G \setminus W$ is not a cycle of degree 3 vertices; and since both $x$ and $z$ are in $G \setminus W$, it follows that $G \setminus W$ is not a single vertex of degree 3.

Thus by Lemma \ref{potentiallemma} and Lemma \ref{k4-e} we have that $p(W) \geq 4$. Note that since $v(W') < v(G)$, it follows from the minimality of $G$ that $W'$ is not a counterexample to Theorem \ref{maintheorem}. Since $4 \leq p(W) \leq p(W')+5$, we have that $p(W') \geq -1$. We break into cases according to the possible outcomes of Theorem \ref{maintheorem} applied to $W'$.\\
\textbf{Case 1: $W' =K_{4}$.}\\
Since $W'$ is obtained by identifying $x'$ and $x''$, this implies that $G$ contains a $K_4-e$ subgraph, contradicting Lemma \ref{k4-e}. \\ 
\textbf{Case 2: $W'$ is $4$-Ore with $T^{3}(G) = 2$.} \\
If $T^{3}(W') =2$ and $W'$ is $4$-Ore, then by Lemma \ref{K4e} if $W'$ is not $M$, then $G$ contains a $K_{4}-e$ subgraph. Again, this contradicts Lemma \ref{k4-e}.  Moreover, since $G$ does not contain a $K_4-e$ subgraph, it follows that if $W'$ is $M$ then $x'''$ is the unique vertex of degree $4$ and the split of $x'''$ back into $x'$ and $x''$ leaves no $K_4-e$ subgraph in $G$. Thus $x$ is the end of an $M$-gadget; and since $\{y,z\} \subseteq V(C)$ and $V(W') \cap V(C) = \emptyset$, this $M$-gadget avoids both $y$ and $z$, as desired. \\
\textbf{Case 3: $W'$ is $4$-Ore with $T^{3}(G) = 3$ or $W' \in \mathcal{B}$.} \\
Suppose either $T^{3}(W') = 3$ and $W'$ is $4$-Ore, or $W' \in \mathcal{B}$. Recall that by Lemma \ref{k4-e}, $G$ does not contain a $K_4-e$ subgraph. Thus by Lemmas \ref{4Oresplit3triangle} and \ref{T8splits}, then either splitting $x'''$ back into $x'$ and $x''$ does not reduce the size of a triangle packing, or in $W \setminus x$ either $x'$ or $x''$ has degree one and is incident to a foundational edge in $G$. The first case gives a contradiction as then $p(W) \leq p(W') + 4$, and $p(W') \leq -1$, which contradicts that $p(W) \geq 4$. Therefore we may assume without loss of generality that $x'$ has degree one in $W'$ after splitting $x'''$ back into $x'$ and $x''$ and that the edge incident to $x'$ is foundational in $W'$. Let the other endpoint of this foundational edge be $y'$. 

Let $W'' =W' - x''' + x''$. Observe that $W''$ is $3$-colourable, as $W'' = W - \{x,x'\}$, implying that $W''$ is a strict subgraph of $G$.

We claim that in every $3$-colouring of $W''$, the vertices $x''$ and $y'$ get the same colour as one another. If not, then since $\deg(x') = 1$ in $(W')^{x'''}$, we have a $3$-colouring of $W'$, which contradicts that $W'$ is $4$-critical. Hence $W''$ contains an identifiable pair. Further, $y,z,x,x' \not \in V(W'')$. Thus we contradict Lemma \ref{noidentifiablepair}.\\
\textbf{Case 4: $W' = W_{5}$.} \\
In this case, regardless of whether $x'''$ is a $3$-vertex or $5$-vertex in $W_{5}$, we have that $W$ contains at least one triangle, and that $v(W) = 8$ and $e(W) = 12$. With this, it follows that $p(W) \leq 8(5) - 3(12) -1 =3$, contradicting Lemma \ref{potentiallemma}, and Lemma \ref{k4-e}.
\end{proof}

\begin{figure}
\begin{center}
\begin{tikzpicture}
\node[blackvertexv2] at (-.25,0) (v1) [label = below:$v_{1}$] {};
\node[blackvertexv2] at (2,0) (v2) [label = below:$v_{2}$] {};
\node[blackvertexv2] at (4,0) (v3) [label = below:$v_{3}$] {};
\node[blackvertexv2] at (6,0) (v4) [label = below:$v_{4}$] {};
\node[blackvertexv2] at (8,0) (v5) [label = below:$v_{5}$] {};
\node[blackvertexv2] at (4,2) (x3) [label = right:$x_{3}$] {};
\draw[thick,black] (v1)--(v2)--(v3)--(v4)--(v5);
\draw[thick,black] (v3)--(x3);

\node[blackvertexv2] at (-.75,2.5) (u2) [label = left:$u_{2}$] {};
\node[blackvertexv2] at (1.1,1.4) (u1) [label = above:$u_{1}$] {};
\draw[thick,black] (u2)--(u1)--(v1)--(u2);
\node[blackvertexv2] at (1.5,4.2) (u4) [label= above:$u_{4}$] {};
\node[blackvertexv2] at (2,2.4) (x2) [label = left:$x_{2}$] {};
\node[blackvertexv2] at (3.3,3.7) (u3) [label =right:$u_{3}$] {};
\draw[thick,black] (u4)--(u3)--(x2)--(u4);
\draw[thick,black] (x3)--(u1);
\draw[thick,black] (u4)--(u2);
\draw[thick,black] (x3)--(u3);
\draw[thick,black] (v2)--(x2);
\draw[thick,dotted, bend left = 30] (x2) to (v1);

\begin{scope}[yshift = -6 cm]
\node[blackvertexv2] at (-.25,0) (v1) [label = below:$v_{1}$] {};
\node[blackvertexv2] at (2,0) (v2) [label = below:$v_{2}$] {};
\node[blackvertexv2] at (4,0) (v3) [label = below:$v_{3}$] {};
\node[blackvertexv2] at (6,0) (v4) [label = below:$v_{4}$] {};
\node[blackvertexv2] at (8,0) (v5) [label = below:$v_{5}$] {};

\draw[thick,black] (v1)--(v2)--(v3)--(v4)--(v5);

\node[blackvertexv2] at (2,2) (x2) [label = right:$x_{2}$] {};
\draw[thick,black] (v2)--(x2);
\node[blackvertexv2] at (-1.9,1.2) (u1) [label = left:$u_{1}$] {};
\node[blackvertexv2] at (-1,1.9) (y1) {};
\node[blackvertexv2] at (-2,3) (y2) {};
\draw[thick,black] (y1)--(y2)--(u1)--(y1);

\node[blackvertexv2] at (0,4.2) (y3) {};
\node[blackvertexv2] at (.25,2.6) (u2) [label= left:$u_{2}$] {};
\node[blackvertexv2] at (1.25,3.3) (y4) {};

\draw[thick,black] (y3)--(y4)--(u2)--(y3);
\draw[thick,black] (u1)--(v1)--(u2);
\draw[thick,black] (y1)--(x2)--(y4);
\draw[thick,black] (y3)--(y2);
\draw[thick,black, dotted, bend right = 30] (u1) to (u2);

\end{scope}

\end{tikzpicture}
\end{center}
\caption{The two $M$-gadgets in Corollary \ref{ind-p4}. The top figure is an $M$-gadget which has end $v_{3}$, and the dotted edge $v_{1}x_{2}$ is a possible outcome of Lemma \ref{identificationlemma} when applied to $v_{2},v_{3},v_{4}$ where $v_{2} =x$. If this occurs, this subgraph has too small potential, contradicting Lemma \ref{potentiallemma}. Otherwise, the second figure occurs, and we have a second $M$-gadget, however in this case we find the additional edge $u_{1}u_{2}$ from the first $M$-gadget, and again contradict Lemma \ref{potentiallemma}.}
\label{Mgadgetfig}
\end{figure}
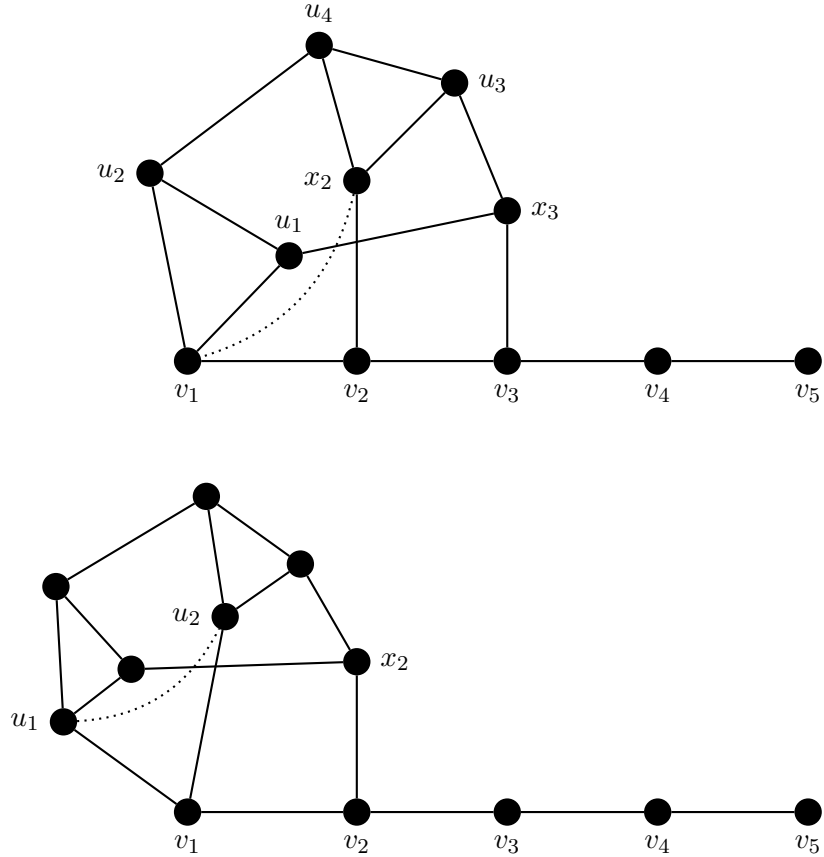

\begin{cor}\label{ind-p4}
The graph $D_{3}(G)$ does not contain an induced path of length four.
\end{cor}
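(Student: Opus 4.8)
\section*{Proof proposal for Corollary \ref{ind-p4}}

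The plan is to suppose for contradiction that $D_3(G)$ contains an induced path $P=v_1v_2v_3v_4v_5$, with every $v_i$ of degree three in $G$, and to derive a contradiction by applying Lemma \ref{identificationlemma} around this path. Since $v_1,\dots,v_5$ lie in one component $C$ of $D_3(G)$ with $v(C)\geq 5\geq 3$, the hypothesis $v(C)\geq 3$ of Lemma \ref{identificationlemma} holds. For $i\in\{2,3,4\}$ write $u_i$ for the neighbour of $v_i$ not on $P$ (it exists and is unique since $\deg v_i=3$), and for $v_1$ write $a,b$ for its two neighbours other than $v_2$; all of $u_2,u_3,u_4,a,b$ are distinct from the interior path vertices because $P$ is induced.

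First I would reduce to the case that no vertex of $P$ lies in a triangle of degree-three vertices. Suppose some $v_i$ does; since $P$ is induced, such a triangle must use $u_i$ (for interior $v_i$) or must be $\{v_1,a,b\}$ or $\{v_1,v_2,u_2\}$-type (for $v_1$), and by the earlier corollary all cycles of $D_3(G)$ are triangles while Corollary \ref{triangledegrees} forbids triangles of $G$ with exactly two degree-three vertices, so the third vertex of the triangle also has degree three. A short case analysis then identifies one of $u_2,u_3,u_4$ with another path-neighbour (producing a $K_4-e$ subgraph, e.g.\ if $u_2=u_3$ then $\{v_2,v_3,v_4,u_3\}$ spans $K_4-e$), contradicting Lemma \ref{k4-e}. (This reduction, together with the $M$-gadget analysis below, is where the bulk of the work lies.)

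With no $v_i$ in a triangle of degree-three vertices, I would apply Lemma \ref{identificationlemma} with $x=v_2,\ y=v_3,\ z=v_4$: the two neighbours of $v_2$ other than $v_3$ are $v_1$ and $u_2$. The edge outcome $v_1u_2\in E(G)$ is impossible, since it would make $\{v_1,v_2,u_2\}$ a triangle with $v_1,v_2$ of degree three, hence (Corollary \ref{triangledegrees}) a triangle of degree-three vertices containing the path vertex $v_2$, contrary to the reduction. So there is an $M$-gadget $\mathcal M$ with end $v_2$, containing $v_1$ and $u_2$ as its two split vertices, and avoiding $v_3$ and $v_4$. Applying the lemma in the same way at $v_1$ (with $y=v_2,\ z=v_3$) likewise forces an $M$-gadget $\mathcal M_1$ with end $v_1$ and split vertices $a,b$, avoiding $v_2,v_3$; and similarly at $v_3,v_4,v_5$.

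The final step exploits the rigidity of these gadgets. In $\mathcal M$ a split vertex has degree $d_i+1$ with $d_1+d_2=4$ and $d_i\ge 1$; since $v_1$ and $u_2$ have degree exactly three in $G$, we get $d_1=d_2=2$, so $v_1$ and $u_2$ have \emph{all} their edges inside $\mathcal M$, and $v_1$'s neighbours are $v_2$ together with two Moser-vertices of $\mathcal M$ — which, being the two neighbours of $v_1$ other than $v_2$, are exactly $a$ and $b$. But $v_1$ is also the \emph{end} of $\mathcal M_1$, so $a,b$ are the split vertices of $\mathcal M_1$ and $v_1$ has degree two in $\mathcal M_1$; tracing the forced adjacencies of $a,b$ in $\mathcal M$ and in $\mathcal M_1$ makes the two gadgets share their Moser structure in a way incompatible with $\mathcal M_1$ avoiding $v_2$, or else forces a degree-three vertex to pick up a fourth neighbour, or leaves $V(\mathcal M)\setminus\{v_2\}$ attached to the rest of $G$ only through $v_2$ — contradicting that $G$ is $3$-connected (Observation \ref{notorecomp}) — or, failing all of these, produces a subgraph of $G$ of potential at most $2$, contradicting Lemma \ref{potentiallemma}. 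Either way we contradict the existence of the induced $P_4$. The main obstacle, as indicated, is the combinatorial bookkeeping of the overlapping $M$-gadgets and the triangle reduction, not any single inequality; a purely potential-based argument on ``gadget $+$ path'' does not suffice, since adding the pendant path vertices only increases the potential.
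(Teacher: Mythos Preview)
Your overall strategy --- repeated applications of Lemma \ref{identificationlemma} along the path, ending in a potential contradiction via Lemma \ref{potentiallemma} --- is the right one, and matches the paper. But the final paragraph is a genuine gap, and the order in which you apply the lemma is what causes it.

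The paper applies Lemma \ref{identificationlemma} first at $v_3$ with $(y,z)=(v_4,v_5)$. The resulting $M$-gadget has end $v_3$ and split vertices $v_2, x_3$; since $\deg(v_2)=3$, all of $v_2$'s neighbours lie in this gadget, so $v_1$ is one of the six Moser vertices and therefore sits in one of the two triangles of the gadget, say $\{v_1,u_1,u_2\}$. By Corollary \ref{triangledegrees} both $u_1,u_2$ have degree at least four. Only \emph{then} does the paper apply the lemma at $v_2$ with $(y,z)=(v_3,v_4)$: the resulting $M$-gadget $M'$ contains $v_1$ as a split vertex, hence contains $u_1,u_2$ as Moser vertices in \emph{different} triangles of $M'$, so $u_1u_2\notin E(M')$ even though $u_1u_2\in E(G)$. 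Now $M'\cup\{u_1u_2\}$ has $9$ vertices, at least $14$ edges, and $T^3\geq 2$, so potential $\leq 1$, contradicting Lemma \ref{potentiallemma}. The whole point of going to $v_3$ first is to \emph{manufacture} the triangle at $v_1$ that supplies the extra edge.

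You instead apply the lemma at $v_2$ and then at $v_1$. At $v_1$ you get two outcomes: either $ab\in E(G)$, or an $M$-gadget $\mathcal M_1$. In the first case your $\mathcal M$ together with the edge $ab$ gives exactly the paper's potential contradiction --- but you don't say this; you jump straight to assuming $\mathcal M_1$ exists (``likewise forces an $M$-gadget''), which is not forced. In the second case your closing paragraph lists four possible contradictions joined by ``or'' without verifying any of them; the overlap of $\mathcal M$ and $\mathcal M_1$ does not obviously violate $3$-connectivity or produce a $K_4-e$, and you have not exhibited a low-potential subgraph. (Separately, your claim that $u_2$ has degree three in $G$ is unjustified --- $u_2$ is the off-path neighbour of $v_2$ and need not lie in $D_3(G)$. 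The conclusion $d_1=d_2=2$ is nevertheless true, but it follows from the no-$K_4{-}e$ clause in the definition of an $M$-gadget, not from degrees.) The fix is simply to reorder: apply the lemma at $v_3$ first to force $v_1$ into a triangle, and the potential computation then goes through in one line.
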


\begin{proof}
Suppose not. Let $C$ be a component of $D_3(G)$ containing an induced path $v_1v_2v_3v_4v_5$.  Let $x_{3}$ be the vertex other than $v_{2}$ and $v_{4}$ that is adjacent to $v_{3}$. By Lemma \ref{k4-e}, $G$ does not contain a $K_{4}-e$ subgraph, and as such either $x_{3}v_{2} \not \in E(G)$, or $x_{3}v_{4} \not \in E(G)$. Without loss of generality, we may assume that $x_{3}v_{2} \not \in E(G)$. Thus by Lemma \ref{identificationlemma}, $v_{3}$ is the end of an $M$-gadget containing $v_{2}$ and $x_{3}$ but not containing $v_{4}$ or $v_{5}$.
Since $\deg(v_2) = 3$, this implies that $v_{1}$ is in a triangle, say $v_{1}u_{1}u_{2}v_{1}$. Let $x_{2}$ be the neighbour of $v_{2}$ which is not $v_{1}$ or $v_{3}$. Similarly, $x_{2}$ is in the $M$-gadget, and is in a triangle $x_{2}u_{3}u_{4}x_{2}$ which does not contain any of the vertices $v_{1},u_{1},u_{2}$. Now we apply Lemma \ref{identificationlemma} to $v_{2}, v_{3}, v_{4}$ with $v_{2}$ playing the role of $x$, and $v_{1}$ and $x_{2}$ playing the role of $x'$ and $x''$. Notice that if $x_{2}v_{1} \in E(G)$, then the as $x_{2}v_{1}$ is not an edge in the $M$-gadget, the graph induced by the vertices in the $M$-gadget has potential at most $1$, contradicting Lemma \ref{potentiallemma}. Therefore by Lemma \ref{identificationlemma} we get that $v_{2}$ is the end of an $M$-gadget, which we refer to as $M'$. We claim the subgraph $M'$ is not induced. First observe that $v_{1} \in V(M')$, since $v_{2}$ has degree $3$. Then it follows that $u_{1},u_{2} \in V(M')$, as $v_{1} \in V(M')$ and $v_{1}$ has degree $3$. Further,  as $v_{2}$ is the degree 2 vertex in the $M$-gadget, the edge $u_{1}u_{2}$ does not lie in $M'$. Let $H' = M' \cup \{u_{1}u_{2}\}$. Then $v(H') =9$ and $e(H') \geq 14$ so $p(H') \leq 45-42-2 =1$. This contradicts Lemma \ref{potentiallemma}.
\end{proof}

\begin{cor}\label{atmostsix}
If $C$ is an acyclic component of $D_3(G)$, then $v(C) \leq 6$.
\end{cor}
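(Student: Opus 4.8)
\emph{Proof proposal.} The plan is to reduce the statement to a short combinatorial fact about trees, using two results already in hand: first, that every cycle in $D_3(G)$ is a triangle, so an acyclic component $C$ of $D_3(G)$ is a tree; second, Corollary \ref{ind-p4}, which says $D_3(G)$ contains no induced path on five vertices. Since in a tree every path is an induced path, $C$ contains no path on five vertices at all. Moreover every vertex of $C$ belongs to $D_3(G)$ and hence has degree exactly $3$ in $G$, so $C$ has maximum degree at most $3$.

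It then suffices to prove: a tree $C$ with maximum degree at most $3$ and no path on five vertices has at most six vertices. I would argue via a longest path $P = v_1 v_2 \cdots v_k$ in $C$; by hypothesis $k \le 4$. If $k \le 3$, then every vertex of $C$ off $P$ must be a leaf adjacent to $v_2$ (it cannot attach to $v_1$ or $v_3$ without lengthening $P$), and since $\deg_C(v_2) \le 3$ uses at most one edge beyond $v_1 v_2$ and $v_2 v_3$, we get $v(C) \le 4 \le 6$. So assume $k = 4$. Any vertex $u \notin V(P)$ reaches $P$ through a unique edge since $C$ is a tree; that edge is not incident to $v_1$ or $v_4$ (otherwise $P$ is not longest), and $u$ has no neighbour outside $P$ (otherwise a path on five vertices runs through $u$, $v_2$ or $v_3$, and an endpoint of $P$). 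Hence every vertex off $P$ is a leaf adjacent to $v_2$ or $v_3$. Since $v_2$ and $v_3$ each already have two neighbours on $P$ and degree at most $3$, each absorbs at most one such leaf, so $v(C) \le 4 + 1 + 1 = 6$.

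The only point requiring care is this last case analysis on the longest path — specifically the claim that a vertex off $P$ is forced to be a leaf adjacent to an internal vertex of $P$, which needs both the tree structure and the absence of a $P_5$ simultaneously. Everything else follows directly from the cited corollaries, so I expect the write-up to be brief.
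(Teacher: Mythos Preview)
Your proposal is correct and follows essentially the same approach as the paper: both argue via a longest path $P$ in the tree $C$, use Corollary~\ref{ind-p4} to bound its length at four vertices, and then use the max-degree-$3$ constraint to show at most two additional leaves can attach to the internal vertices of $P$. The paper's write-up is only cosmetically different, splitting into the same cases on the length of $P$.
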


\begin{proof}
Let $P$ be a longest path in a component $C$. Note that $P$ is induced, since $C$ is acyclic; and thus by Corollary \ref{ind-p4} we have that $v(P) \leq 4$. First suppose that $P$ contains four vertices; say $P=v_{1}v_{2}v_{3}v_{4}$. Then $v_{2}$ and $v_{3}$ are each adjacent to exactly one vertex not in the path, say $v_{2}'$ and $v_{3}'$ respectively.

Suppose $v_{2}'$ has degree 3 in $G$. If $v_{2}'$ is adjacent to a vertex $u \in D_{3}(G) \setminus \{v_{2}\}$, then $uv_{2}'v_{2}v_{3}v_{4}$ is longer than $P$, contradicting our choice of path.  Applying a similar argument to $v_{3}'$ we see that $v(C) \leq 6$ in this case. Now suppose that $P$ is a path of length $2$; say $P=v_{1}v_{2}v_{3}$. If $v_{2}$ is adjacent to a vertex of degree $3$, say $v_{2}'$, then $v_{2}'$ is not adjacent to another vertex of degree $3$, as otherwise we have a path of length $3$, contradicting our choice of $P$. Hence in this case, $v(C) \leq 4$. Lastly, if the longest path has length at most one, then $v(C) \leq 2$ as desired.
\end{proof}

Now we build towards proving every component of $D_{3}(G)$ is acyclic. 

\begin{lemma}
\label{neighbourshavelargedegree}
Let $T = xyzx$ be a triangle of degree three vertices. Then at most one vertex in $N(T)$ has degree $3$. 
\end{lemma}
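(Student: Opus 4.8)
The plan is to argue by contradiction. Write $N(C)=\{x',y',z'\}$ with $x'\sim x$, $y'\sim y$, $z'\sim z$; these are distinct and pairwise non-adjacent by Lemma~\ref{k4-e} and Theorem~\ref{Independentsettheorem}. Suppose $\deg_G(x')=\deg_G(y')=3$. Since Theorem~\ref{Independentsettheorem} forces $x'$ and $y'$ to get the same colour in every $3$-colouring of $G-C$, the graph $(G-C)+x'y'$ is not $3$-colourable, so it contains a $4$-critical subgraph $W'$, and $x'y'\in E(W')$ (otherwise $W'\subseteq G-C$ is a proper $4$-critical subgraph of $G$). Applying Lemma~\ref{potentiallemma} to $W'-x'y'\subseteq G-C$ gives $p(W'-x'y')\ge 3$, and since a triangle packing uses at most one triangle through a given edge, $p(W'-x'y')\le p(W')+4$; hence $p(W')\ge -1$, so $W'$ is one of the exceptional graphs of Theorem~\ref{maintheorem}.

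First I would dispose of the ``easy'' possibilities for $W'$. If $W'=K_4$, then $W'-x'y'$ is a $K_4-e$ in $G$; if $W'$ is $4$-Ore with $T^3(W')=2$, then by Lemma~\ref{K4e} it has two edge-disjoint kites and the one avoiding $x'y'$ is a $K_4-e$ in $G$; if $W'=W_5$, then $W_5$ minus any edge still contains a $K_4-e$. Each case contradicts Lemma~\ref{k4-e}. If instead $W'\in\mathcal B$ or $W'$ is $4$-Ore with $T^3(W')=3$ (so $p(W')=-1$), the potential computation together with $p(W'-x'y')\ge 3$ forces $T^3(W'-x'y')=1$ (resp.\ $2$); since $W'-x'y'\subseteq G$ contains no $K_4-e$, the edge $x'y'$ is \emph{foundational} in $W'$, so by Lemma~\ref{foundationaledgesinB} (resp.\ Lemma~\ref{foundationaledgesin4Ore}), and by inspection when $W'=T_8$, it is the spar of a kite $K$ of $W'$. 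Let $p,q$ be the two degree-three-in-$W'$ vertices of $K$. Because $x'$ has degree three in $G$ and is adjacent in $G$ to $x$, $p$ and $q$, we get $N_G(x')=\{x,p,q\}$; symmetrically $N_G(y')=\{y,p,q\}$; and $pq\notin E(G)$, since otherwise $\{x',y',p,q\}$ induces a $K_4-e$ in $G$.

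For the remaining configuration I would run a second extension argument. Let $R=G-\{x,y,z,x',y'\}$, an induced subgraph with $v(R)=v(G)-5$ and $G\setminus R$ consisting of five vertices, so that Lemma~\ref{noidentifiablepair} and Lemma~\ref{potentiallemma} both apply to subgraphs of $R$. A short colouring check shows that a $3$-colouring $c$ of $R$ extends to $G$ — by setting $c(x')=c(y')$ equal to any colour outside $\{c(p),c(q),c(z')\}$ and then colouring the triangle $\{x,y,z\}$ — whenever $c(p),c(q),c(z')$ are not pairwise distinct; since $G$ is not $3$-colourable, every $3$-colouring of $R$ makes $p,q,z'$ rainbow. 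Hence adding a new vertex $w$ adjacent to $p,q,z'$ yields a graph $R+w$ that is not $3$-colourable, so it contains a $4$-critical subgraph $W''$ with $w\in V(W'')$ and $\deg_{W''}(w)=3$. Applying Lemma~\ref{potentiallemma} to $W''-w\subseteq R$ — using that $G$ contains no $K_4-e$, hence no kite, by Lemma~\ref{k4-e} — gives $p(W''-w)\ge 4$, and deleting the degree-three vertex $w$ changes the potential by at most $+5$, so $p(W'')\ge -1$. When $w$ lies in no triangle of $W''$ (which, since $pq\notin E(G)$, is the generic case) we in fact get $p(W'')\ge 0$; then $W''=K_4$ forces $\{p,q,z'\}$ to be a triangle, which is impossible, while $W''$ being $4$-Ore with $T^3=2$ produces, via Lemma~\ref{K4e}, a kite of $W''$ avoiding $w$, hence a $K_4-e$ in $G$ — a contradiction.

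The main obstacle is the residual case where $w$ \emph{does} lie in a triangle of $W''$, i.e.\ where (say) $pz'\in E(G)$, so that $p$ is adjacent to all three vertices of $N(C)$; here one only gets $p(W'')\ge -1$ and must also handle $W''\in\{W_5\}\cup\mathcal B\cup\{4\text{-Ore with }T^3=3\}$. I expect this to be resolved by re-running the first part of the argument with the non-edge $x'z'$ in place of $x'y'$: the kite produced in a $4$-critical subgraph of $(G-C)+x'z'$ must, by the constraint $N_G(x')=\{x,p,q\}$, again have $p$ and $q$ as its non-spar vertices, forcing $q$ also adjacent to all of $N(C)$, so that $\{x',y',z'\}$ becomes a $3$-cut of $G$ separating $\{x,y,z\}$ from the rest — a configuration that is then either easily $3$-coloured or excluded via the potential of a small induced subgraph — contradicting that $G$ is a counterexample. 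Carrying out this last step cleanly, and verifying that the degree-three vertex $w$ of $W''$ is always forced off the high-degree ``hub'' vertices of the exceptional $W''$ (so that $W''-w$ still contains a $K_4-e$), is where the real care is needed.
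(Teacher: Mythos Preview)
Your proposal is far longer than needed and, as you concede, incomplete; it also contains a slip. The paper's proof is essentially three lines, using Lemma~\ref{identificationlemma}. Since $y'$ has degree three, $x,y,y'$ is a path in $D_3(G)$; the hypothesis of Lemma~\ref{identificationlemma} (as its proof actually uses it) is that the last two vertices of the path do not lie \emph{together} in a triangle of degree-three vertices, and the edge $yy'$ lies in no such triangle (any would create a $K_4-e$ with $C$). The two neighbours of $x$ other than $y$ are $x'$ and $z$; since $x'z\notin E(G)$ (else $\{x,y,z,x'\}$ induces $K_4-e$), Lemma~\ref{identificationlemma} produces an $M$-gadget with end $x$ avoiding $y,y'$. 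Its two disjoint triangles avoid $x,y,z$, so together with $C$ we get $T^3(G)\ge 3$; since $G$ is not $4$-Ore, $\mathrm{KY}(G)\le 1$, whence $p(G)\le -2$.

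Your first extension argument on $(G-C)+x'y'$ is essentially re-deriving the mechanism already packaged inside Lemma~\ref{identificationlemma}. Your handling of $W'=T_8$ is wrong: the unique foundational edge $u_1u_2$ of $T_8$ is \emph{not} the spar of a kite, since $u_1$ and $u_2$ have degree four there --- this is exactly why Lemma~\ref{foundationaledgesinB} excludes $T_8$ from its second clause. (The case can be dismissed differently: $x'=u_1$ would have three neighbours in $T_8-u_1u_2\subseteq G$ in addition to $x$, contradicting $\deg_G(x')=3$.) Finally, your second extension with the auxiliary vertex $w$ leaves a genuine residual case for which you only offer a sketch, one that itself requires re-running the whole first argument with $x'z'$ and then further case analysis on $\deg(z')$. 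All of this is bypassed by a direct appeal to Lemma~\ref{identificationlemma}.
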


\begin{proof}

Suppose not. We claim that $|N(T)| =3$. If $|N(T)| =1$, then $G = K_{4}$ and $G$ is not a counterexample. If $|N(T)| =2$, then let $u,v$ be the two vertices in $N(T)$. By the pigeonhole principle, we may assume $u$ has two neighbours in $T$. Then $T \cup \{u\}$ is a $K_{4}-e$ subgraph of $G$, contradicting Lemma \ref{k4-e}. Let $x',y',z'$ be the vertices in $N(T)$, where $x'$ is adjacent to $x$, $y'$ is adjacent to $y$, and $z'$ is adjacent to $z$. Without loss of generality, suppose that $x'$ and $y'$ both have degree $3$.  Note that $xy' \not \in E(G)$ and similarly $x'z \not \in E(G)$, since $G$ contains no $K_4-e$ subgraph by Lemma \ref{k4-e}. Thus by Lemma \ref{identificationlemma} applied to $x, y, y'$, we have that $x$ is the end of an $M$-gadget not containing $y$ or $y'$.   But now it follows that there are two vertex-disjoint triangles in $G-x-y-z$, and hence $T^{3}(G) \geq 3$. As $G$ is not $4$-Ore, $\text{KY}(G) \leq 1$, and thus $p(G) \leq -2$, a contradiction. 
\end{proof}

\begin{lemma}\label{acyclic}
The graph $D_{3}(G)$ is acyclic.
\end{lemma}

\begin{proof}
Suppose not. Let $T$ be a triangle in $D_{3}$. As $G$  contains no $K_4-e$ subgraph by Lemma \ref{k4-e}, it follows that $|N(T)| =3$. By Theorem \ref{Independentsettheorem} all vertices of $N(T)$ receive the same colour in any 3-colouring of $G-T$. Hence every pair of vertices in $N(T)$ are an identifiable pair in $G-T$. Let $R := G-T$. Let $x,y$ be two vertices in $N(T)$, such that $y$ is adjacent to a vertex $z$ in $T$. Observe that in $R + xy$, we have a $4$-critical graph $W$, and since $T^3(W-xy) \geq T^3(W)-1$, we have that
\begin{equation}\label{p(W-xy)}
    p(W-xy) \leq p(W) +4.
\end{equation} If $W = K_{4}$, then $G$ has a  $K_{4}-e$ subgraph, contradicting Lemma \ref{k4-e}. If $W$ is $4$-Ore with $T^{3}(G) = 2$, then by Lemma \ref{K4e}, again $G$ contains a $K_{4}-e$ subgraph,  contradicting Lemma \ref{k4-e}. If $p(W) \leq -2$, then we obtain a contradiction to Lemma \ref{potentiallemma}. Further, we can assume that $W \neq W_{5}$ since otherwise $p(W-xy) = 5(6)-3(9)-1 = 2$, again contradicting Lemma \ref{potentiallemma}.
Additionally, if $W -xy \neq R$, then we obtain a contradiction to Lemma \ref{potentiallemma} when $p(W) \leq -1$. Thus we can assume that $W=R +xy$ and that either $W$ is $4$-Ore with $T^{3}(W)=3$, or $W \in \mathcal{B}$. 

First assume that $W$ is $4$-Ore with $T^{3}(W) = 3$. Now consider splitting $x$ into two vertices $x_{1}$ and $x_{2}$ such that $\deg(x_{1}) = 1$ and $x_{1}$ is only adjacent to $y$. Let $W^{x}$ denote this graph. Note that $W^x$ is isomorphic to a subgraph $H$ of $G$: an isomorphism is given by $g:V(W^x)\longrightarrow H$ where $g$ is the identity for all $v \in V(W^x)\setminus \{x_1,x_2\}$, where $g(x_1) = z$, and where $g(x_2) = x$. By Lemma \ref{4Oresplit3triangle}, either $W^{x}$ has $T^{3}(W^{x}) \geq T^3(W)$; or  $W^{x}$ contains a $K_{4}-e$; or $xy$ is a foundational edge in $W$. If $W^{x}$ has $T^{3}(W^{x}) \geq T^3(W)$, then
since $T^3(W^x) = T^3(W^x-x_1y) = T^3(W-xy)$ it follows that  Equation \ref{p(W-xy)} can be strengthened to $p(W-xy) \leq p(W) + 3$. Since $p(W) = -1$ and $W-xy \subset G$, this contradicts Lemma \ref{potentiallemma}.
If $W^{x}$ contains a $K_{4}-e$, then $G$ contains a $K_{4}-e$, contradicting Lemma \ref{k4-e}. Therefore we can assume that $xy$ is a foundational edge, and by Lemma \ref{foundationaledgesin4Ore} such an edge is the spar of a kite. Thus in $W-xy$, both $x$ and $y$ have degree two, which implies that in $G$, both $x$ and $y$ have degree $3$. But this contradicts Lemma \ref{neighbourshavelargedegree}.

Therefore we can assume that $W$ is in $\mathcal{B}$. Then $xy$ is a foundational edge, as otherwise by Lemma \ref{4Oresplit3triangle} either $G-xy$ contains a $K_{4}-e$ subgraph, contradicting Lemma \ref{k4-e}, or as above we can strengthen Equation \ref{p(W-xy)} and obtain a contradiction. If $W \neq T_{8}$, then by Lemma \ref{foundationaledgesinB} we have that $xy$ is the spar of a kite. Then in $W-xy$, both $x$ and $y$ have degree two, which implies that in $G$, both $x$ and $y$ have degree $3$, contradicting Lemma \ref{neighbourshavelargedegree}. Therefore $W = T_{8}$. As $W = R +xy = G-T+xy$, our entire graph is $T_{8} - u_{1}u_{2} + T$. In this case,  we label the vertices of $T$ by setting $T = v_1v_2v_3v_1$. We may assume without loss of generality, $v_1$ is adjacent to $u_1$, and $v_2$ is adjacent to $u_2$. Moreover, by Theorem \ref{Independentsettheorem}, the neighbour of $v_3$ outside of $\{v_1, v_2\}$ forms an independent set with $\{u_1, u_2\}$. It follows that the third edge incident with $v_3$ is incident with a vertex in $\{u_6, u_7, u_8\}$. It is easy to verify that the resulting graph is 3-colourable. As these are all the cases, it follows that $D_{3}(G)$ is acyclic. 
\end{proof}

Lemma \ref{acyclic} and Corollary \ref{atmostsix} imply the following.
\begin{cor}\label{nomorethansix}
Every component in $D_{3}(G)$ has at most six vertices.
\end{cor}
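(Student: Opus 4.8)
The plan is short: the statement is an immediate consequence of the two results established just above it. We have just shown that $D_3(G)$ is acyclic, and — a few lines earlier — that every \emph{acyclic} component of $D_3(G)$ has at most six vertices. Combining these, every component of $D_3(G)$ is acyclic and hence has at most six vertices, so there is essentially nothing left to prove.

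It is worth recalling how the two ingredients are obtained, since that is where all the work lies. The bound ``acyclic component $\Rightarrow$ at most six vertices'' is proved by taking a longest path $P$ in the component; by Corollary \ref{ind-p4}, $D_3(G)$ contains no induced path on five vertices, so $P$ has length at most three, and a short case analysis on the off-path neighbours of the internal vertices of $P$ yields the bound. The acyclicity of $D_3(G)$ is the genuinely hard step: assuming $D_3(G)$ contains a triangle $T$ of degree-three vertices, one uses $3$-connectivity to get $|N(T)| = 3$, Theorem \ref{Independentsettheorem} to force every pair of vertices of $N(T)$ to be an identifiable pair in $G - T$, and then the potential-extension lemma (Lemma \ref{potentialextensionlemma}), together with Lemmas \ref{K4e}, \ref{4Oresplit3triangle}, \ref{T8splits}, \ref{foundationaledgesin4Ore}, \ref{foundationaledgesinB} and \ref{neighbourshavelargedegree}, to rule out every possibility for the resulting $4$-critical subgraph $W$, except for a single small configuration built from $T_8$ that turns out to be $3$-colourable.

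So the main obstacle is entirely contained in the preceding lemma (that $D_3(G)$ is acyclic); once that and the earlier corollary on acyclic components are in hand, the present corollary requires no new argument, and its one-line proof is simply: $D_3(G)$ is acyclic by the previous lemma, hence each component of $D_3(G)$ is acyclic, hence each has at most six vertices by the earlier corollary.
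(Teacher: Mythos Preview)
Your proposal is correct and matches the paper's approach exactly: the paper also presents this corollary as an immediate consequence of the preceding lemma (that $D_3(G)$ is acyclic) together with the earlier corollary bounding the size of acyclic components. Your summary of the ingredients is accurate, but the actual proof content here is, as you say, a one-liner.
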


\section{Discharging}
\label{dischargingsection}
In this section we provide the discharging argument which shows that a vertex-minimum counterexample to Theorem \ref{maintheorem} does not exist. We start by showing that there exists a component of $D_{3}(G)$ with at least three vertices. Note that though the proof of Lemma \ref{bigcomp} uses discharging, what follows is not the main discharging argument in the paper.

\begin{lemma}\label{bigcomp}
There exists a component of $D_{3}(G)$ with at least three vertices.
\end{lemma}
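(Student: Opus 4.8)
The plan is to argue by contradiction via a global counting (potential) estimate. Suppose every component of $D_3(G)$ has at most two vertices. I would first record the value $p(G) = 5v(G) - 3e(G) - T^3(G)$, and recall from the assumption that $p(G) \geq -1$. Since $G$ is $4$-critical with minimum degree $3$, and every component of $D_3(G)$ has at most $2$ vertices, I want to get a lower bound on $e(G)$ in terms of $v(G)$ that is strong enough to force $p(G) \leq -2$. The idea is to count edge-endpoints: let $n_3$ be the number of degree-three vertices and $n_{\geq 4} = v(G) - n_3$ the rest, so $2e(G) = \sum_v \deg(v) \geq 3n_3 + 4n_{\geq 4} = 4v(G) - n_3$, i.e. $e(G) \geq 2v(G) - n_3/2$. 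Thus the potential estimate is driven entirely by how small $n_3$ can be forced to be relative to the number of edges inside and leaving $D_3(G)$.

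The key structural input is that each component of $D_3(G)$ has at most two vertices, so $D_3(G)$ is a disjoint union of isolated vertices and isolated edges; every degree-three vertex sends at least two (if it is in an edge-component) or three (if isolated) of its three edges to $V(G)\setminus V(D_3(G))$. I would count the edges from $D_3(G)$ to its complement and use Lemma~\ref{triangledegrees} (a triangle cannot contain exactly two degree-three vertices) together with Lemma~\ref{k4-e} (no $K_4-e$) to control the local picture. The cleanest route is probably to apply the potential machinery directly: take $F$ to be the subgraph of $G$ induced by the vertices of degree at least four (or some carefully chosen induced subgraph obtained by removing $D_3(G)$), and then use Lemma~\ref{potentiallemma}, which says $p(F) \geq 3$ (and $\geq 4$ unless $G\setminus F$ is a triangle of degree-three vertices, or a single degree-three vertex, or $G$ contains a kite) for any proper subgraph. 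Removing an isolated degree-three vertex changes $p$ by $-5 + 3\cdot 3 + (\text{change in }T^3) \geq$ a controlled amount; removing an isolated edge of $D_3(G)$ (two adjacent degree-three vertices, plus their at most $4$ outgoing edges) changes $p$ by roughly $-10 + 3\cdot(1 + 4) = +5$ in the worst case; and since triangles are excluded from $D_3(G)$ being large, $T^3$ is well-behaved. Iterating these removals from $G$ down to a small graph and tracking the inequality should yield $p(G) \leq -2$, contradicting $p(G)\geq -1$.

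Concretely I would set up the bookkeeping as follows: write $G = F \cup D_3(G)$ where $F = G - V(D_3(G))$; if $V(D_3(G)) = \emptyset$ then $e(G) \geq 2v(G)$ and $p(G) = 5v(G) - 3e(G) - T^3(G) \leq -v(G) - T^3(G) \leq -2$ (using $v(G)\geq 2$), a contradiction, so $D_3(G)\neq\emptyset$. If $D_3(G)$ has an isolated vertex $v$, then $v$ has three neighbours in $F$ and I can try to delete $v$ and compare $p(G-v)$ with $p(G)$: $p(G-v) = p(G) + 5 - 3\cdot 3 + (T^3(G) - T^3(G-v)) \leq p(G) - 4 + (T^3(G) - T^3(G-v))$, and since $G-v\subsetneq G$, Lemma~\ref{potentiallemma} forces $p(G-v) \geq 3$, which pushes information about $T^3$. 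Handling the $T^3$ terms and the edge-component case is the main obstacle — I would need to argue that deleting $D_3$-components does not drop $T^3$ by more than the edges removed allow, using that no degree-three vertex lies in a triangle with another degree-three vertex (Lemma~\ref{triangledegrees}) so that any triangle through a $D_3$-vertex uses two vertices of $F$, keeping the triangle-packing change small. Once those accounting details are pinned down, summing over all components of $D_3(G)$ gives $e(G) \geq \tfrac{5v(G)+c}{3}$ for a constant $c$ making $p(G) \leq -2$, the desired contradiction, so some component of $D_3(G)$ must have at least three vertices.
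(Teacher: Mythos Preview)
Your approach has a genuine gap: the potential bookkeeping you propose cannot produce a contradiction, and the paper's argument relies on an idea you are missing entirely.

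First, a concrete error. When you remove an isolated degree-three vertex $v$, the potential goes \emph{up}, not down: $p(G-v) = p(G) - 5 + 9 + (T^3(G) - T^3(G-v)) = p(G) + 4 + \Delta T$ with $\Delta T \in \{0,1\}$. Lemma~\ref{potentiallemma} gives only a \emph{lower} bound $p(G-v)\geq 3$, which combined with $p(G-v)\geq p(G)+4$ merely recovers $p(G)\geq -1$, i.e.\ nothing new. The same happens for edge-components. More broadly, your degree-sum inequality $2e(G)\geq 3n_3 + 4(v(G)-n_3)$ yields $\text{KY}(G)\leq -v(G) + \tfrac{3}{2}n_3$, which is useless when $n_3$ is close to $v(G)$; nothing in your outline bounds $n_3$ away from $v(G)$.

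What the paper actually does is a \emph{colouring} argument that you never touch. Let $F$ be the spanning subgraph whose edges are those of $G$ joining two vertices of degree $\geq 4$. If $F$ were bipartite with parts $(A,B)$, then using that every component of $D_3(G)$ has $\leq 2$ vertices together with Corollary~\ref{triangledegrees} (no triangle has exactly two degree-three vertices), one checks that $D_3(G)\cup A$ induces a bipartite graph; colouring it with $\{1,2\}$ and $B$ with $3$ gives a proper $3$-colouring of $G$, contradicting $4$-criticality. Hence $F$ contains an odd cycle, so $e(F)\geq 3$. A simple discharging (each $\deg\geq 4$ vertex sends $\tfrac16$ to each degree-three neighbour) then shows $2e(G)\geq \tfrac{10}{3}v(G) + \tfrac{e(F)}{3} \geq \tfrac{10}{3}v(G)+1$, whence $\text{KY}(G)\leq -\tfrac32$ and by integrality $p(G)\leq \text{KY}(G)\leq -2$. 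The non-bipartiteness of $F$ is the missing idea; without it the degree estimates simply do not close.
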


\begin{proof}
Suppose not. Note that $D_3(G)$ is bipartite. Let $F$ be the subgraph of $G$ with $V(F) = V(G)$ and $E(F) = \{ xy  \in E(G) \, | \, \deg(x) \geq 4 \text{ and } \deg(y) \geq 4\}$.

\begin{claim}
\label{F2edges}
The graph $F$ has $e(F) \geq 2$. If $e(F) =2$, the two edges $e_{1}$ and $e_{2}$ in $F$ do not share an endpoint.
\end{claim}

\begin{proof}
Suppose not. If $F$ is an independent set, then as $D_{3}(G)$ is bipartite, we get that $V(G)$ can be partitioned into three independent sets. This implies that $G$ is $3$-colourable, a contradiction. Now suppose that $F$ contains precisely one edge $e = xy$.Then $F-y$ is an independent set. We claim that $D_{3}(G) \cup \{y\}$ is bipartite. To see this, suppose for a contradiction that $C$ is an odd cycle in $D_3(G) \cup \{y\}$. Note that $D_{3}(G) \cup \{y\}$ is triangle-free, since no triangle in $G$ contains exactly two vertices of degree $3$ by Corollary \ref{triangledegrees}. Thus $C$ has at least five vertices. Since each component of $D_{3}(G)$ has at most a single edge,  it follows that $C$ contains at least two vertices not in $D_3(G)$ \textemdash a contradiction, since $C \subseteq D_3(G) \cup \{y\}$. Thus $D_{3}(G) \cup \{y\}$ is bipartite, which implies that $G$ is $3$-colourable, a contradiction.  

From the above, $F$ has at least two distinct edges $e_1=xy$ and $e_2=y'z$. It remains to show that $e_1$ and $e_2$ do not share an endpoint. To see this, suppose not: suppose without loss of generality that $y = y'$. In this case, $F-y$ is an independent set, and by the same argument as the previous case, $D_{3}(G) \cup \{y\}$ is bipartite. This implies $G$ is $3$-colourable, a contradiction. 
\end{proof}
\begin{claim}
\label{isolatedvertexormanyedges}
Either $e(F) \geq 3$, or $e(F) = 2$ and there is a component in $D_{3}(G)$ that is an isolated vertex.
\end{claim}
\begin{proof}
By Claim \ref{F2edges}, the only case we need to consider is the one where $F$ has exactly two edges $e_{1} = xy$ and $e_{2} = uv$ where $\{x,y\} \cap \{u,v\} = \emptyset$. In this case, we aim to show that there is a component in $D_3(G)$ that is an isolated vertex. Observe that $F -y -v$ is an independent set. If $D_{3}(G) \cup \{y,v\}$ is a bipartite graph, then by the same argument as before we find that $G$ is $3$-colourable, a contradiction. Thus $D_{3}(G) \cup \{y,v\}$ contains an odd cycle, and since no triangle contains precisely two vertices of degree $3$ by Corollary \ref{triangledegrees}, there are no triangles in $D_{3}(G) \cup \{y,v\}$ (as $yv \not \in E(F)$). Further $D_{3}(G) \cup \{y,v\}$ does not contain an odd cycle of length at least $7$, as any such cycle contains at least three vertices from $V(G) -V(D_{3}(G))$. Therefore the only odd cycles in $D_{3}(G) \cup \{y,v\}$ are $5$-cycles.  Let $\{H_1, \ldots, H_t\}$ be the set of all $5$-cycles in $D_{3}(G) \cup \{y,v\}$. Recall that by assumption every component of $D_3(G)$ contains at most one edge. Thus we may assume that for each $i \in \{1,2, \ldots, t\}$, we have that $H_i=a_{i,1}a_{i,2}va_{i,3}ya_{i,1}$, where $\{a_{i,1}, a_{i,2}, a_{i,3}\} \subset V(D_3(G))$. If there exists an $i \in \{1, \ldots, t\}$ such that $a_{i,3}$ is an isolated vertex in $D_{3}(G)$, then we are done. Therefore we may assume that for each $i \in \{1, \dots, t\}$,  $a_{i,3}$ has a neighbour $a_{i,4} \in V(D_{3}(G))$. Since every vertex in $D_3(G)$ has degree $3$ by definition, it follows that for each $i \in \{1, \dots, t\}$, the neighbourhood of $a_{i,3}$ is precisely $\{y,v,a_{4}\}$. Note that $\{a_{1,3}, \dots, a_{t,3}\}$ is an independent set, since each $a_{i,3}$ is adjacent to $y$ and no triangle contains exactly two vertices of degree $3$ by Corollary \ref{triangledegrees}. Thus $F -y-v \cup \{a_{1,3}, \dots, a_{t,3}\}$ is an independent set, and $D_{3}(G) \cup \{y,v\} - \{a_{1,3}, \dots, a_{t,3}\}$ contains no odd cycles. It follows that $G$ is $3$-colourable, a contradiction.
\end{proof}

We now use discharging to complete the proof of Lemma \ref{bigcomp}. We define $\ch_{i}$ to be the initial charge, and set $\ch_{i}(v) = \deg(v)$ for each vertex $v \in V(G)$. Let each vertex of degree at least four send $\frac{1}{6}$ charge to each neighbour of degree $3$. For each $v \in V(G)$, let $\ch_{f}(v)$ denote the final charge of $v$. Note that all degree $3$ vertices end up with at least $\frac{10}{3}$ final charge, and any degree $3$ vertex which is isolated in $D_{3}(G)$ ends up with $\frac{10}{3} + \frac{1}{6}$ charge. If $v$ has degree at least four, then $\ch_f(v) = \frac{10}{3}$ if and only if $\deg(v) = 4$ and $v$ is adjacent to exactly four vertices of degree $3$. Further, if either of those conditions do not hold, the final charge of $v$ is at least $\frac{10}{3} + \frac{1}{6}$. Therefore for every edge $e=xy \in E(F)$, we have $\ch_{f}(x) \geq \frac{10}{3} + \frac{1}{6}$ and $\ch_{f}(y) \geq \frac{10}{3} + \frac{1}{6}$. Let $i$ denote the number of isolated vertices in $D_{3}(G)$. It follows that
\begin{equation}\label{finalcharges}
 \sum_{v \in v(G)}\ch_{f}(v) \geq \frac{10v(G)}{3} + \frac{e(F)}{3} + \frac{i}{6}.   
\end{equation}

If $e(F) \geq 3$, then we have 
\[2e(G) \geq \frac{10v(G) + 3}{3}.\]
Multiplying each side by $\frac{3}{2}$ gives $3e(G) \geq 5v(G)+\frac{3}{2}$. Thus it follows that
\begin{align*}
    p(G) &\leq \text{KY}(G) \\
    &= 5v(G)-3e(G) \\
    &\leq 5v(G) -\left(5v(G)+\frac{3}{2}\right) \\
    &= -\frac{3}{2}
\end{align*}
Since potential is integral, we get that $p(G) \leq -2$, contradicting that $G$ is a counterexample to Theorem \ref{maintheorem}. 

Therefore by Claim \ref{isolatedvertexormanyedges}, we have that $e(F) =2$ and $i \geq 1$, and so by multiplying Equation \ref{finalcharges} by $\frac{3}{2}$ we get that $3e(G) \geq 5v(G) + \frac{5}{4}$. 

As above, this implies that
\begin{align*}
    p(G) &\leq \text{KY}(G) \\
    &=5v(G)-3e(G) \\
    &\leq 5v(G)-\left(5v(G)+\frac{5}{4}\right) \\
    &=-\frac{5}{4}.
\end{align*}

Since potential is integral, this implies that $p(G) \leq -2$, again contradicting that $G$ is a counterexample.
\end{proof}

Now we proceed with the main discharging argument.
We assign to each vertex $v \in V(G)$ an initial charge $\ch_i(v) = \deg(v)$. We discharge in three steps: in each step, the discharging occurs instantaneously throughout the graph. The final charge will be denoted by $\ch_f$. For $v \in V(G)$, let $i_3(v)$ denote the number of neighbours of $v$ that are isolated vertices in $D_3(G)$, and similarly let $\deg_{3}(v)$ denote the number of neighbours of degree $3$ a vertex $v$ has.

\textbf{Discharging Steps}
\begin{enumerate}
    \item If $u$ is a vertex of degree at least four, $uv$ is an edge, and $v$ is a vertex of degree $3$, then $u$ sends $\frac{3\ch_i(u)-10}{3\deg_3(u)}$ charge to $v$.
    \item If $u$ is an isolated vertex in $D_3(G)$, $u$ sends $\frac{1}{18}$ charge to each adjacent vertex in $G$.
    \item Let $u$ be a vertex of degree at least four, and let $f(u)$ be the total charge received by $u$ in Step 2. If $\deg_3(u) \neq i_3(u)$, then the vertex $u$ sends $\frac{f(u)}{\deg_3(u)-i_3(u)}$ charge to each adjacent vertex of degree $3$ that is not isolated in $D_3(G)$.
\end{enumerate}

We will show that after discharging, the sum of the charges is at least $v(G)\left(\frac{10}{3}\right)$. Note that by the discharging rules, we have immediately that every vertex of degree at least four has final charge at least $\frac{10}{3}$. In light of this, we will focus our attention on the vertices of degree $3$: let $C$ be a component in $D_3(G)$, and let $\ch_f(C) = \sum_{v \in V(C)} \ch_f(v)$.

We note the following.

\begin{obs}\label{onesixth}
If $u$ sends charge to $v$ in Step 1, then $u$ sends $v$ at least $\frac{1}{6}$ charge.
\end{obs}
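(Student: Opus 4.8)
The plan is to unpack the formula for the charge sent in Step 1 and bound it below by $\tfrac16$. Step 1 says: if $u$ has $\deg(u) \geq 4$ and $uv \in E(G)$ with $\deg(v) = 3$, then $u$ sends $\tfrac{3\deg(u)-10}{3\deg_3(u)}$ charge to $v$ (here $ch_i(u) = \deg(u)$). So I need to show
\[
\frac{3\deg(u)-10}{3\deg_3(u)} \;\geq\; \frac{1}{6}.
\]
Since $\deg_3(u) \leq \deg(u)$, it suffices to show $\tfrac{3\deg(u)-10}{3\deg(u)} \geq \tfrac16$, i.e. $6(3\deg(u)-10) \geq 3\deg(u)$, i.e. $18\deg(u) - 60 \geq 3\deg(u)$, i.e. $15\deg(u) \geq 60$, i.e. $\deg(u) \geq 4$. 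This holds by hypothesis, so we are done.

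The first thing I would check carefully is that the inequality $\deg_3(u) \leq \deg(u)$ is being used in the right direction: decreasing the denominator $3\deg_3(u)$ only \emph{increases} the quantity, so replacing $\deg_3(u)$ by the larger value $\deg(u)$ gives a \emph{lower} bound for the fraction, which is exactly what we want. I would also note that the numerator $3\deg(u) - 10$ is positive once $\deg(u) \geq 4$, so there is no sign issue, and that $\deg_3(u) \geq 1$ whenever Step 1 actually fires (since $v$ is such a neighbour), so the denominator is nonzero.

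There is no real obstacle here — this is a one-line computation — but the one place to be slightly careful is simply to record that the bound is tight exactly when $\deg(u) = 4$ and $\deg_3(u) = 4$, which foreshadows the extremal configuration (a degree-four vertex all of whose neighbours have degree three) that will recur in the discharging analysis. A clean write-up is: by Step 1 the charge sent is $\tfrac{3\deg(u)-10}{3\deg_3(u)}$; since $\deg_3(u) \leq \deg(u)$ and $3\deg(u) - 10 > 0$, this is at least $\tfrac{3\deg(u)-10}{3\deg(u)} = \tfrac13 - \tfrac{10}{3\deg(u)} \geq \tfrac13 - \tfrac{10}{12} = \tfrac16$, using $\deg(u) \geq 4$.
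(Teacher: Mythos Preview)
Your main argument is correct and is exactly the natural computation (the paper states the observation without proof, so there is nothing further to compare). One small slip to fix in your final ``clean write-up'': you have
\[
\frac{3\deg(u)-10}{3\deg(u)} = \frac{1}{3} - \frac{10}{3\deg(u)},
\]
but this should read $1 - \frac{10}{3\deg(u)}$; with the corrected expression, $1 - \frac{10}{12} = \frac{1}{6}$ as intended. (As written, $\frac{1}{3} - \frac{10}{12} = -\frac{1}{2}$, so the displayed chain is internally inconsistent even though your earlier paragraph derives the bound correctly.)
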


\begin{claim}\label{isolated}
If $C$ is an isolated vertex, then $\ch_f(C) \geq \frac{10}{3}$.
\end{claim}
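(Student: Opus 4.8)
Claim \ref{isolated} asserts that an isolated vertex $C$ of $D_3(G)$ ends with charge at least $\tfrac{10}{3}$.

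The plan is as follows. Let $C = \{v\}$ be an isolated vertex in $D_3(G)$, so $v$ has degree three in $G$ and all three neighbours of $v$ have degree at least four. I will account for the charge that $v$ gains and loses in each of the three discharging steps. In Step 1, each of the three neighbours of $v$ sends $v$ some charge; by Observation \ref{onesixth}, each such contribution is at least $\tfrac16$, so $v$ gains at least $3 \cdot \tfrac16 = \tfrac12$ in Step 1. In Step 2, since $v$ is itself an isolated vertex of $D_3(G)$, it \emph{sends} $\tfrac{1}{18}$ to each of its three neighbours, losing a total of $3 \cdot \tfrac{1}{18} = \tfrac16$. In Step 3, $v$ is isolated in $D_3(G)$, so it is never a recipient (Step 3 sends charge only to non-isolated degree-three vertices), and it never sends charge in Step 3 (only vertices of degree at least four do). Hence $v$ neither gains nor loses charge in Step 3.

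Combining, $ch_f(v) \geq \deg(v) + \tfrac12 - \tfrac16 = 3 + \tfrac13 = \tfrac{10}{3}$, which is exactly the desired bound. Since $C = \{v\}$, we get $ch_f(C) = ch_f(v) \geq \tfrac{10}{3}$, completing the proof of the claim.

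I do not anticipate a genuine obstacle here: this claim is essentially a bookkeeping exercise, and the only substantive input is Observation \ref{onesixth} (that every Step 1 transfer is at least $\tfrac16$), which is already established. The one point to state carefully is that $v$'s Step 2 \emph{outflow} is exactly $\tfrac16$ and is more than compensated by the $\tfrac12$ inflow from Step 1; and that Step 3 does not affect isolated vertices at all, so no further loss can occur. Here is the write-up.

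\begin{proof}
Write $C = \{v\}$, so $\deg(v) = 3$ and every neighbour of $v$ has degree at least four. We track the charge at $v$ through the three discharging steps. In Step 1, each of the three neighbours of $v$ is a vertex of degree at least four adjacent to the degree-three vertex $v$, and hence sends $v$ charge; by Observation \ref{onesixth}, each such transfer is at least $\frac{1}{6}$, so $v$ receives at least $\frac{1}{2}$ in total in Step 1. In Step 2, since $v$ is an isolated vertex of $D_3(G)$, it sends $\frac{1}{18}$ to each of its three neighbours, for a total outflow of $\frac{1}{6}$; it receives nothing in Step 2. In Step 3, charge is sent only \emph{by} vertices of degree at least four and only \emph{to} degree-three vertices that are not isolated in $D_3(G)$; as $v$ has degree three and is isolated in $D_3(G)$, it neither sends nor receives charge in Step 3. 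Therefore
\[
ch_f(v) \;\geq\; \deg(v) + \frac{1}{2} - \frac{1}{6} \;=\; 3 + \frac{1}{3} \;=\; \frac{10}{3},
\]
and since $ch_f(C) = ch_f(v)$, the claim follows.
\end{proof}
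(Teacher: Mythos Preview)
Your proof is correct and follows essentially the same approach as the paper's: both track the Step~1 inflow of at least $3\cdot\frac{1}{6}$ via Observation~\ref{onesixth} against the Step~2 outflow of $3\cdot\frac{1}{18}$ to obtain $ch_f(v)\ge 3+\frac{1}{2}-\frac{1}{6}=\frac{10}{3}$. Your explicit remark that Step~3 neither adds to nor subtracts from $v$ is a small clarification the paper leaves implicit.
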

\begin{proof}
Let $v \in V(C)$. Note that $\ch_i(v) = \deg(v) = 3$. Since $v$ is isolated in $D_3(G)$, every neighbour of $v$ has degree at least four. Thus by Observation \ref{onesixth}, $v$ receives at least $\frac{1}{6}$ from each of its neighbours in Step 1. Moreover, $v$ returns exactly $\frac{1}{18}$ to each of its neighbours in Step 2. It follows that
\begin{align*}
    \ch_f(v) &\geq 3 + 3\left(\frac{1}{6}\right)-3\left(\frac{1}{18}\right) \\
    &= \frac{10}{3}, \textnormal{ as desired. }
\end{align*} 
\aftermath
\end{proof}

\begin{claim}
\label{pathlength1}
If $C$ is a path of length one, then $\ch_f(C) \geq v(C) \left(\frac{10}{3}\right)$.
\end{claim}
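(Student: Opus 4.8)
The plan is to track the final charge at each of the two vertices of $C$ separately. Write $C = uv$; since $C$ is a component of $D_3(G)$, both $u$ and $v$ have degree three in $G$, and the only degree-three neighbour of $u$ in $G$ is $v$ (and vice versa), so each of $u$ and $v$ has exactly two neighbours of degree at least four. I would first record that, because $u$ and $v$ are adjacent in $D_3(G)$, neither is an isolated vertex there, so neither sends charge in Step 2; and because every neighbour of $u$ or of $v$ other than each other has degree at least four (hence is not a vertex of $D_3(G)$ at all), neither $u$ nor $v$ receives any charge in Step 2 either. Since $u$ and $v$ have degree three, they send nothing in Steps 1 and 3, and whatever they receive in Step 3 is non-negative, so it can only help.

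This reduces the claim to Step 1. Each of $u$ and $v$ has exactly two neighbours of degree at least four, and by Observation \ref{onesixth} each such neighbour contributes at least $\tfrac16$ to it in Step 1. Hence $ch_f(u) \ge 3 + 2\cdot\tfrac16 = \tfrac{10}{3}$, and symmetrically $ch_f(v) \ge \tfrac{10}{3}$, so $ch_f(C) = ch_f(u) + ch_f(v) \ge \tfrac{20}{3} = v(C)\left(\tfrac{10}{3}\right)$, as desired.

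I do not anticipate any genuine difficulty; the argument is a direct charge count. The one point requiring care — the analogue of the ``obstacle'' here — is confirming that $u$ and $v$ neither emit charge in Step 2 (they are non-isolated in $D_3(G)$) nor absorb charge in Step 2 from an isolated vertex of $D_3(G)$ (all their neighbours outside $C$ have degree at least four), so that the $\tfrac13$ gained by each of $u$ and $v$ in Step 1 survives intact; both facts are immediate from $C$ being a component of $D_3(G)$ of size exactly two.
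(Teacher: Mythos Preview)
Your proof is correct and follows essentially the same approach as the paper: both track the charge at the two vertices of $C$, note that each has exactly two neighbours of degree at least four, and apply Observation~\ref{onesixth} to obtain $ch_f(C) \ge 2(3) + 4\cdot\tfrac{1}{6} = 2\cdot\tfrac{10}{3}$. Your version is slightly more careful in explicitly verifying that Steps~2 and~3 do not decrease the charge, which the paper leaves implicit.
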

\begin{proof}
Let $C= v_1v_2$. Note that $\ch_i(v_1) = \ch_i(v_2) = 3$, and that by Observation \ref{onesixth}, each of $v_1$ and $v_2$ receive at least $\frac{1}{6}$ from each of their neighbours of degree at least four, and since $V(C) =2$, neither $v_1$ nor $v_2$ sends charge in Step 2. It follows that
\begin{align*}
    \ch_f(C) &\geq \ch_i(v_1) + 2\left(\frac{1}{6}\right) + \ch_i(v_2) + 2\left(\frac{1}{6}\right) \\
    &= 2\left(\frac{10}{3}\right),\textnormal{ as desired. }
\end{align*}
\aftermath
\end{proof}

For the remaining cases, we will make use of the following fact. 
\begin{claim}\label{leaves}
If $v$ is a leaf in a tree $C \subseteq D_3(G)$ with $v(C) \geq 3$, then $v$ receives at least $\frac{4}{9}$ charge from its neighbourhood during Step 1.
\end{claim}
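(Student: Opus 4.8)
The plan is to show that the two neighbours of $v$ of degree at least four together send it at least $\tfrac49$ in Step~1. Write $N(v) = \{w,a,b\}$, where $w$ is the neighbour of $v$ in $C$. This neighbour is unique: any neighbour of $v$ lying in $D_3(G)$ lies in $v$'s component $C$, and $v$ is a leaf of $C$; consequently $\deg(a),\deg(b)\ge 4$, since a degree-three neighbour of $v$ other than $w$ would give $v$ a second neighbour in $C$. In Step~1, $v$ receives exactly $g(a)+g(b)$, where $g(u):=\frac{3\deg(u)-10}{3\deg_3(u)}$. Since $\deg_3(u)\le\deg(u)$ we have $g(u)\ge 1-\frac{10}{3\deg(u)}$, which is already $\ge\tfrac49$ once $\deg(u)\ge 6$, while $g(u)\ge\tfrac13$ if $\deg(u)=5$ and $g(u)\ge\tfrac16$ if $\deg(u)=4$. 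Hence if one of $a,b$ has degree $\ge 6$ we are done at once, and if one has degree $5$ then $g(a)+g(b)\ge\tfrac13+\tfrac16=\tfrac12\ge\tfrac49$. It therefore remains to treat the case $\deg(a)=\deg(b)=4$, in which $g(a)+g(b)=\frac{2}{3\deg_3(a)}+\frac{2}{3\deg_3(b)}$, so it suffices to prove $\frac{1}{\deg_3(a)}+\frac{1}{\deg_3(b)}\ge\tfrac23$. This is immediate if $\deg_3(a)\le 3$ and $\deg_3(b)\le 3$, so we may assume (without loss of generality) that $\deg_3(a)=4$, i.e. that every neighbour of $a$ has degree three.

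Next I invoke Lemma~\ref{identificationlemma}. Because $v(C)\ge 3$ and $v$ is a leaf of $C$, the vertex $w$ has a neighbour $w'\in V(C)$ with $w'\ne v$, and $w,w'$ do not lie in a triangle of degree-three vertices since $D_3(G)$ is acyclic. Applying Lemma~\ref{identificationlemma} with $x=v$, $y=w$, $z=w'$ and $\{x',x''\}=\{a,b\}$, we conclude that either $ab\in E(G)$, or $v$ is the end of an $M$-gadget $J$ containing $a$ and $b$ but not $w$ or $w'$. In the first case $b$ is a neighbour of $a$ of degree at least four, contradicting $\deg_3(a)=4$. So $v$ is the end of an $M$-gadget $J$, and $a,b$ are the two vertices produced by splitting the (unique) degree-four vertex of the Moser spindle; by the analysis of which such splits avoid a $K_4-e$, each of $a,b$ is adjacent inside $J$ to exactly $v$ and one vertex of each of the two ``diamonds'' of the spindle.

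The $M$-gadget sub-case is the main obstacle. One first observes that $J$ must be an induced subgraph of $G$: here $v(J)=9$, $e(J)=13$, and $T^3(J)=2$ (the two vertex-disjoint triangles coming from the diamonds), so $p(J)=4$, whereas any additional edge inside $V(J)$ would drop $p(J)$ to at most $1$, contradicting Lemma~\ref{potentiallemma}. It follows that $a$ has exactly three neighbours in $J$ and hence, as $\deg(a)=4$, exactly one neighbour outside $J$; likewise for $b$. Each diamond triangle is a triangle in $G$, so by acyclicity of $D_3(G)$ (equivalently Corollary~\ref{triangledegrees}) each contains a vertex of degree at least four. Using this, together with the assumption that every neighbour of $a$ has degree three and the precise spindle adjacencies (in particular that $a$ and $b$ meet \emph{distinct} diamond vertices and that $a\not\sim b$), the plan is to force $b$ to have at least two neighbours of degree at least four, i.e. $\deg_3(b)\le 2$, whence $g(a)+g(b)\ge\frac{2}{12}+\frac{2}{6}=\tfrac12\ge\tfrac49$; when this fails, tracking the degrees of all nine spindle vertices produces either a triangle of degree-three vertices, a subgraph violating Lemma~\ref{potentiallemma}, or the $2$-cut $\{a,b\}$ contradicting Observation~\ref{notorecomp} (recall no $K_4-e$ appears, by Lemma~\ref{k4-e}). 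The bookkeeping of which spindle vertices can have degree three and where their one extra edge can go is the only delicate point; all the rest is the elementary arithmetic above.
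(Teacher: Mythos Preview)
Your plan is correct, and in fact cleaner than you give yourself credit for: the contingency ``when this fails'' never arises. Once you are in the $M$-gadget case with $\deg(a)=4$ and $\deg_3(a)=4$, both of $a$'s diamond-neighbours have degree three, so Corollary~\ref{triangledegrees} forces every \emph{other} vertex of each diamond triangle---in particular $b$'s two diamond-neighbours---to have degree at least four; hence $\deg_3(b)\le 2$ and $g(a)+g(b)\ge \tfrac{1}{6}+\tfrac{1}{3}=\tfrac12$. You do not need the induced-subgraph argument via Lemma~\ref{potentiallemma}: the $M$-gadget structure already tells you which vertices are adjacent to $a$ and $b$, and that is all the argument uses (and $a\nsim b$ is automatic, since $b$ has degree four and would contradict $\deg_3(a)=4$).

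The paper's proof takes a slightly different route. It applies Lemma~\ref{identificationlemma} immediately, without your preliminary reduction to $\deg(a)=\deg(b)=4$ and $\deg_3(a)=4$. In the edge case $ab\in E(G)$ it simply notes $\deg_3(a)\le\deg(a)-1$ and $\deg_3(b)\le\deg(b)-1$, giving at least $\tfrac{2}{9}$ from each. In the $M$-gadget case it observes that each diamond triangle contains at most one degree-three vertex (Corollary~\ref{triangledegrees}), so at least two of the four diamond-neighbours of $\{a,b\}$ have degree $\ge 4$; a short two-subcase split then yields at least $\tfrac{4}{9}$. Your approach trades this uniform argument for a longer elementary reduction up front, which lets you dispose of the easy cases cheaply but lands you in essentially the same $M$-gadget computation for the hard case. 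Both work; the paper's version is a bit shorter and avoids the (unnecessary) potential detour.
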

\begin{proof}
As $v$ is a leaf in a tree with at least three vertices, there exists a path $vuw$ in $C$. Let $x$ and $y$ be two neighbours of $v$ which are not $u$. By Lemma \ref{identificationlemma}, either $xy \in E(G)$, or $x$, $v$, and $y$ lie in an $M$-gadget with end $v$ that avoids both $u$ and $w$.
If $xy \in E(G)$, then note that $\deg_3(x) \leq \deg(x) -1$, and likewise $\deg_3(y) \leq \deg(y)-1$. In this case, each $z \in \{x, y\}$ sends $v$ at least  $\frac{\deg(z)}{\deg(z)-1}-\frac{10}{3(\deg(z)-1)}$ charge. Since $\deg(z) \geq 4$, it follows that $v$ receives at least $\frac{2}{9}$ from each of $x$ and $y$, and so at least $\frac{4}{9}$ in total.

If $xy \not \in E(G)$, then $v, x,$ and $y$ lie in an $M$-gadget with end $v$ that avoids both $u$ and $w$. Thus there exist two vertex-disjoint triangles $T$ and $T'$ such that $x$ is adjacent to a vertex $a \in V(T)$ and $a' \in V(T')$, and $y$ is adjacent to a vertex $b \neq a$ in  $V(T)$ and $b' \neq a'$ in $V(T')$. Note by Corollary \ref{triangledegrees} and Lemma \ref{acyclic}, each of $T$ and $T'$ contain at most one vertex of degree $3$. Thus at least two of $\{a, a', b, b'\}$ have degree at least four. Without loss of generality, we may assume that either $a$ and $a'$ have degree at least four, or that $a$ and $b'$ have degree at least four. In the first case, $x$ sends at least $\frac{1}{3}$ to $v$, and $y$ sends at least $\frac{1}{6}$ to $v$. Thus $v$ receives at least $\frac{1}{2}$ from $x$ and $y$. In the second case, each of $x$ and $y$ sends at least $\frac{2}{9}$ to $v$, and so $v$ receives at least $\frac{4}{9}$. Thus $v$ receives at least $\frac{4}{9}$ charge from its neighbourhood.
\end{proof}

\begin{claim}\label{p2}
If $C= v_1v_2v_3$ is a path of length $2$, then $\ch_f(C) \geq v(C) \left(\frac{10}{3}\right)$.
\end{claim}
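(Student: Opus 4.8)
The target is $ch_f(C) \geq v(C)\left(\frac{10}{3}\right) = 10$, and since each of $v_1,v_2,v_3$ begins with charge $3$ (total $9$), it suffices to show that the three vertices of $C$ collectively gain at least one unit of charge over the three discharging steps while losing none. The plan is to account separately for the two leaves $v_1,v_3$ and for the centre $v_2$, invoking Claim \ref{leaves} for the former and Observation \ref{onesixth} for the latter.

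First I would note that none of $v_1,v_2,v_3$ is an isolated vertex of $D_3(G)$, since each lies in the three-vertex component $C$; hence none of them sends charge during Step 2. Consequently no vertex of $C$ ever loses charge, and the contributions of Steps 2 and 3 to $ch_f(C)$ are non-negative, so it is enough to bound below the charge received by $C$ during Step 1.

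For the leaves: as $C$ is a tree with $v(C) = 3 \geq 3$, Claim \ref{leaves} applies to each of $v_1$ and $v_3$, so each of them receives at least $\frac{4}{9}$ charge in Step 1. For the centre $v_2$: it has degree three, and two of its neighbours are $v_1$ and $v_3$, which have degree three; hence $v_2$ has exactly one neighbour $u$ of degree at least four, and by Observation \ref{onesixth}, $u$ sends $v_2$ at least $\frac{1}{6}$ charge in Step 1. Combining these,
\[
ch_f(C) \ \geq\ 3\cdot 3 + \frac{4}{9} + \frac{4}{9} + \frac{1}{6} \ =\ 9 + \frac{19}{18} \ >\ 10,
\]
which gives the claim.

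There is no serious obstacle at this stage: the substantive work has already been done in Claim \ref{leaves} (via the $M$-gadget analysis of Lemma \ref{identificationlemma} and Corollary \ref{triangledegrees}). The only points requiring care are verifying that $v_2$ genuinely has exactly one neighbour outside $D_3(G)$ — immediate from $C$ being a connected component of $D_3(G)$ equal to a path of length two — and confirming that Claim \ref{leaves} and Observation \ref{onesixth} are applied to the correct vertices.
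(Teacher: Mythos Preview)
Your proof is correct and follows essentially the same approach as the paper: apply Claim \ref{leaves} to the two leaves $v_1,v_3$ for $\frac{4}{9}$ each, apply Observation \ref{onesixth} to $v_2$ for $\frac{1}{6}$, and sum to get $9 + \frac{19}{18} = \frac{181}{18} > 10$. Your additional remark that vertices of $C$ send no charge in Step 2 (since none is isolated in $D_3(G)$) is a small clarification the paper leaves implicit.
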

\begin{proof}
By Claim \ref{leaves}, each of $v_1$ and $v_3$ receives at least $\frac{4}{9}$ units of charge from its neighbourhood during Step 1. Moreover, by Observation \ref{onesixth}, $v_2$ receives at least $\frac{1}{6}$ units of charge during Step 1. Thus 
\begin{align*}
    \ch_f(C) &\geq \ch_i(v_1) + \frac{4}{9} + \ch_i(v_2) + \frac{1}{6} + \ch_i(v_3) + \frac{4}{9} \\
    &=\frac{181}{18} \\
    &> 3\left(\frac{10}{3}\right),\textnormal{\ as desired. }
\end{align*}
\aftermath
\end{proof}

\begin{claim}\label{star}
If $C$ is a star with four vertices, then $\ch_f(C) \geq v(C) \left( \frac{10}{3} \right)$.
 \end{claim}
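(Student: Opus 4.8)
The plan is to exploit the fact that the center of the star receives --- and loses --- no charge at all, so the required slack of $4\left(\frac{10}{3}-3\right) = \frac{4}{3}$ must be recovered entirely from the three leaves, and this is exactly what Claim \ref{leaves} provides.

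Write $C = K_{1,3}$ with center $v_0$ and leaves $v_1, v_2, v_3$. First I would observe that $v_0 \in V(D_3(G))$ has degree exactly three, and since $C$ is a star its three neighbours are $v_1, v_2, v_3$, each of degree three; thus $v_0$ has no neighbour of degree at least four. Consequently $v_0$ neither sends nor receives charge in Step~1, and --- being a degree-three vertex that is not isolated in $D_3(G)$ --- it takes no part in Steps~2 or~3 either. Hence $ch_f(v_0) = ch_i(v_0) = 3$.

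Next, for each leaf $v_i$ I would apply Claim \ref{leaves}, which is valid since $C$ is a tree with $v(C) = 4 \geq 3$ and $v_i$ is a leaf of it: $v_i$ receives at least $\frac{4}{9}$ units of charge during Step~1. The hypotheses needed behind the scenes hold automatically here: $v_i v_0 v_j$ (for $j \neq i$) is a path in $C$, and since $D_3(G)$ is acyclic there are no triangles of degree-three vertices at all, so the condition in Lemma \ref{identificationlemma} that $v_0, v_j$ avoid such a triangle is vacuous. Moreover each $v_i$ is a degree-three vertex that is not isolated in $D_3(G)$, so it sends no charge in any step, giving $ch_f(v_i) \geq ch_i(v_i) + \frac{4}{9} = 3 + \frac{4}{9}$.

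Finally I would simply add the four contributions:
\[
ch_f(C) = ch_f(v_0) + \sum_{i=1}^{3} ch_f(v_i) \geq 3 + 3\left(3 + \frac{4}{9}\right) = 12 + \frac{4}{3} = \frac{40}{3} = 4\left(\frac{10}{3}\right),
\]
as required. I do not expect any genuine obstacle here; the only subtlety worth flagging is that the bound is tight --- the center contributes nothing beyond its initial charge and $3 \cdot \frac{4}{9}$ is precisely the total deficit --- so there is no room to be wasteful, but Claim \ref{leaves} is exactly strong enough to close the gap.
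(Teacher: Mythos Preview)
Your proof is correct and follows essentially the same approach as the paper: apply Claim \ref{leaves} to each of the three leaves to obtain at least $\tfrac{4}{9}$ each, note the center stays at its initial charge of $3$, and sum to $4\left(\tfrac{10}{3}\right)$. Your write-up is a bit more explicit about why no charge is lost from the center or the leaves, but the argument is the same.
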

 \begin{proof}
By Claim \ref{leaves}, each leaf in $C$ receives at least $\frac{4}{9}$ from its neighbourhood during Step 1 of the discharging process.  Moreover, each $u \in V(C)$ has $\ch_i(u) = 3$. Thus it follows that 
\begin{align*}
  \ch_f(C) &\geq \left(\frac{4}{9} + 3\right) + \left(\frac{4}{9} + 3\right) + \left(\frac{4}{9} + 3\right) + 3 \\
  &=12 + \frac{4}{3} \\
  &= 4\left(\frac{10}{3}\right),\textnormal{\ as desired. }
\end{align*}
\aftermath
\end{proof}

For the remaining cases, we will need the following lemma.

\begin{lemma}\label{betterleaves}
Let $v$ be a leaf in a tree $C \subseteq D_3(G)$ with $v(C) \geq 3$, and let $u, w$ be the neighbours of $v$ that are not contained in $C$. Suppose $u, w$ are contained in an $M$-gadget with end $v$.  At the end of the discharging process, $v$ will have received at least $\frac{1}{2}$ charge from its neighbours.
\end{lemma}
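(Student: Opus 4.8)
The plan is to refine the argument of Claim \ref{leaves} in the case where $v,x,y$ (here $x=u$, $y=w$) lie in an $M$-gadget with end $v$, exploiting the rigid structure of the $M$-gadget rather than merely counting which of its triangle vertices have degree at least four. Recall the $M$-gadget is built from the Moser spindle $M$ by splitting its degree-four vertex into $v_1,v_2$ (with no $K_4-e$ created), and then attaching $v$ adjacent only to $v_1$ and $v_2$. So $u$ and $w$ are exactly the vertices $v_1$ and $v_2$ (up to relabelling), and each of them is a vertex of the split Moser spindle that inherits the neighbourhood structure of $M$. First I would pin down, from the explicit edge set of $M$, exactly how many degree-three (in $G$) neighbours $u$ and $w$ can each have inside the $M$-gadget: since $G$ has no $K_4-e$ (Lemma \ref{k4-e}) and no triangle with exactly two degree-three vertices (Corollary \ref{triangledegrees}), the two triangles $T,T'$ of the spindle part each contain at most one degree-three vertex, and $u,w$ themselves have degree three in $G$ (they lie in $C\subseteq D_3(G)$? — no: $u,w\notin C$, but they may still be degree three elsewhere; I must be careful). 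The key point is that $u$ and $w$ each have all of their $G$-neighbours inside the gadget except possibly one, because their degree in the gadget is already large; combined with Corollary \ref{triangledegrees} this forces enough of their neighbours to have degree $\geq 4$.

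The core computation is then a bookkeeping of Step-1 charge. A vertex $z$ of degree $d\geq 4$ adjacent to $v$ sends $v$ exactly $\frac{3d-10}{3\deg_3(z)}$, and since $\deg_3(z)\leq d$ this is at least $\frac{3d-10}{3d}=1-\frac{10}{3d}$, which is $\tfrac{1}{6}$ when $d=4$, $\tfrac{11}{27}$... — the relevant increments I would track are: $\geq\frac16$ always, $\geq\frac13$ if additionally $\deg_3(z)\leq d-1$ (one neighbour of $z$ has degree $\geq4$), and $\geq\frac29$ is the value that appeared in Claim \ref{leaves}. The goal $\frac12$ will come from showing that among the contributions of $u$ and $w$ to $v$ in Step 1, plus the Step-3 charge $v$ receives (which is nonnegative and which I can use as slack), the total is at least $\frac12$. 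Concretely: if one of $u,w$, say $u$, has $\deg_3(u)\leq \deg(u)-2$ (i.e. at least two of its neighbours have degree $\geq4$), then $u$ alone sends $v$ at least — I'd compute $\frac{3d-10}{3(d-2)}\geq\frac13$ for $d=4$ and increasing in $d$ — so at least $\frac13$, and $w$ sends at least $\frac16$, giving $\frac12$. The remaining case is when each of $u,w$ has all but one neighbour of degree three; then I would use the $M$-gadget structure together with Corollary \ref{triangledegrees} to show this case is actually impossible or forces another degree-$\geq4$ neighbour, or else extract the missing charge from Step 3 via the isolated vertices $a,a',b,b'$ of the triangles (some of which are isolated in $D_3(G)$, hence trigger Step 2, hence feed Step 3 charge back through $u$ or $w$ to $v$).

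I expect the main obstacle to be exactly this last case analysis: making sure that when $u$ and $w$ each have a "cheap" contribution of only $\frac16$, the $M$-gadget really does contain enough structure — either a forced high-degree vertex among $\{a,a',b,b'\}$ giving a $\frac13$ from one side, or enough isolated-in-$D_3(G)$ vertices to route $\frac{1}{9}$ total through Step 3 — to reach $\frac12$. This requires a careful, finite check against the labelled edge set of the Moser spindle: there are only a handful of ways to split its degree-four vertex avoiding a $K_4-e$, and for each I would verify which of the spindle vertices are forced (by Corollary \ref{triangledegrees} and the fact that $G$ is $4$-critical with minimum degree three) to have degree at least four in $G$. I would organize this as a short subcase split on whether $u$ and $w$ each have exactly one neighbour of degree three, and in the "yes/yes" subcase invoke the triangle-degree constraint on $T$ and $T'$ to locate a degree-$\geq 4$ vertex adjacent to $u$ or to $w$ beyond the trivial count, contradicting "exactly one". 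The cleanest phrasing will likely be: in all cases, either $u$ (or $w$) sends $v$ at least $\tfrac13$ in Step~1 while the other sends at least $\tfrac16$, or both send $\tfrac29$ each; in every case the total is at least $\tfrac12$.
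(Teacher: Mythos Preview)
Your overall shape is right---case-split on which of the triangle vertices $a,a',b,b'$ have degree three, and track Step~1 charge---and this is exactly what the paper does. But your proposal has a genuine gap in the tight case, and your closing summary contains an arithmetic error that masks it.

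The error: you write that ``both send $\tfrac{2}{9}$ each; in every case the total is at least $\tfrac{1}{2}$,'' but $\tfrac{2}{9}+\tfrac{2}{9}=\tfrac{4}{9}<\tfrac{1}{2}$. This is not a harmless slip: the $\tfrac{4}{9}$ scenario actually occurs. Take $\deg(u)=\deg(w)=4$ and suppose $\deg(a)=\deg(b')=3$ (so by Corollary~\ref{triangledegrees} we have $\deg(a'),\deg(b)\geq 4$). Then $u$ has neighbours $v,a,a'$ and one external vertex; if that external vertex has degree three, then $\deg_3(u)=3$ and $u$ sends $v$ exactly $\tfrac{2}{9}$ in Step~1. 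Symmetrically for $w$. Nothing in the $M$-gadget structure or in Corollary~\ref{triangledegrees} forces the external neighbour to have degree $\geq 4$, so your plan to ``locate a degree-$\geq 4$ vertex adjacent to $u$ or $w$ beyond the trivial count, contradicting `exactly one'\,'' does not go through.

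What rescues this case is precisely the Step~2/Step~3 rerouting that you mention only as a fallback. Since $a$ lies in the triangle $T$ and has degree three, Corollary~\ref{triangledegrees} forces $a$ to be \emph{isolated} in $D_3(G)$; hence $a$ sends $\tfrac{1}{18}$ to $u$ in Step~2, and $u$ passes at least $\tfrac{1}{18(\deg_3(u)-i_3(u))}\geq \tfrac{1}{36}$ to $v$ in Step~3. The same holds for $b'$ and $w$. This extra $\tfrac{1}{18}$ brings $\tfrac{4}{9}$ up to exactly $\tfrac{1}{2}$. So Step~3 is not optional slack---it is the mechanism that makes the lemma true in the diagonal case, and your writeup needs to make this computation explicit. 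The paper also first disposes of the case $\deg(u)\geq 5$ or $\deg(w)\geq 5$ separately (Step~1 alone suffices there), which you should include to keep the $\deg(u)=\deg(w)=4$ arithmetic clean.
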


\begin{proof}
By the structure of $M$-gadgets, there exist two vertex-disjoint triangles $T$ and $T'$ such that $u$ is adjacent to vertex $a$ in $T$ and $a'$ in $T'$, and such that $w$ is adjacent to $b \neq a$ in $T$ and $b' \neq a'$ in $T'$. First, we note that if either $\deg(u) \geq 5$ or $\deg(w) \geq 5$, we are done. To see this, suppose without loss of generality that $\deg(u) \geq 5$. Note that by Corollary \ref{triangledegrees}, at most one vertex in $T$ and at most one vertex in $T'$ has degree $3$. If both $b$ and $b'$ have degree at least four, then in Step 1 $u$ sends $v$ at least $\frac{1}{3}$ and $w$ sends $v$ at least $\frac{1}{3}$. If $a$ and $a'$ have degree at least four, then in Step 1 $u$ sends $v$ at least $\frac{5}{9}$. Finally, if $a$ and $b'$ have degree at least four, then in Step 1 $u$ sends $v$ at least $\frac{5}{12}$, and $w$ sends $v$ at least $\frac{2}{9}$. In all cases, $v$ receives at least $\frac{1}{2}$.

Thus we may assume that $\deg(u) = \deg(w) = 4$. We now break into cases depending on the degrees of $a, b, a',$ and $b'$.\\

\noindent
\textbf{Case 1. No vertex in $\{a,b,a',b'\}$ has degree $3$.} In this case, $a,b,a',b'$ all have degree at least four. Thus $\deg_3(u) \leq 2$ and $\deg_3(w) \leq 2$, and so $v$ receives at least $\frac{1}{3}$ from each of $u$ and $w$ in Step 1. Since $v \in V(C)$ and $v(C) \geq 3$, we have furthermore that $v$ sends no charge in Step 2. Thus $v$ receives at least  $\frac{2}{3}$ charge, as desired. \\

\noindent
\textbf{Case 2. Precisely one of $a, b, a',$ and $b'$ has degree $3$.}
Suppose $\deg(a) = 3$. Then $w$ is adjacent to at least two vertices of degree not equal to three, and so $w$ sends at least $\frac{1}{3}$ to $v$ in Step 1. Moreover, $u$ is adjacent to $a'$ with $\deg(a') \neq 3$, and so $u$ sends $v$ at least $\frac{2}{9}$ in Step 1. By Corollary \ref{triangledegrees}, since $a$ is contained in a triangle, it follows that $a$ is isolated in $D_3(G)$. Thus $a$ sends $\frac{1}{18}$ to $u$ in Step 2. Since $a$ is isolated and $\deg(a') \neq 3$, we have that $u$ sends at least $\frac{1}{18(\deg(u)-2)}$ to $v$ in Step 3. As our choice for $a$ was arbitrary but $\deg(u) = \deg(w) = 4$, it follows that during discharging $v$ receives at least
\[ \frac{1}{3} + \frac{2}{9} + \frac{1}{18(4)-36} = \frac{7}{12} \textnormal{\hskip 4mm charge, as desired. }
\]

\noindent
\textbf{Case 3. Either $\deg(a) = \deg(a') = 3$, or $\deg(b) = \deg(b')=3$.}
By symmetry, we may assume without loss of generality that $\deg(a) = \deg(a') = 3$. Then $u$ sends $v$ at least $\frac{1}{6}$ in Step 1. By Corollary \ref{triangledegrees} and Lemma \ref{acyclic}, neither $b$ nor $b'$ has degree $3$, and so $w$ sends $v$ at least $\frac{1}{3}$ in Step 1. By Corollary \ref{triangledegrees}, since $a$ and $a'$ are each contained in a triangle, it follows that both $a$ and $a'$ are isolated in $D_3(G)$. Thus each of $a$ and $a'$ sends $\frac{1}{18}$ to $u$ in Step 2, and so $u$ sends at least $\frac{1}{9(\deg(u)-2)}$ to $v$ in Step 3.

Since $\deg(u) = \deg(w) = 4$ by assumption, it follows that during discharging $v$ receives at least
\[
    \frac{1}{6} + \frac{1}{3} + \frac{1}{9(4-2)} = \frac{5}{9} \textnormal{\hskip 4mm charge, as desired.}
\]

\textbf{Case 4. $\deg(a) = \deg(b') = 3$ or $\deg(b) = \deg(a') = 3$.} By symmetry, we may assume that $\deg(a) = \deg(b') = 3.$ By Corollary \ref{triangledegrees} and Lemma \ref{acyclic}, each of $a$ and $b'$ are isolated in $D_3(G)$, and so each of $u$ and $w$ sends $v$ at least $\frac{2}{9}$ in Step 1. Moreover, $a$ sends $u$ $\frac{1}{18}$ charge in Step 2; similarly, $b'$ sends $w$ $\frac{1}{18}$ charge in Step 2. Thus $v$ receives at least $\frac{1}{18(\deg(u)-2)}$ from $u$ in Step 3, and at least $\frac{1}{18(\deg(w)-2)}$ from $w$ in Step 3. It follows that during discharging $v$ receives at least
\[
\frac{2}{9} + \frac{2}{9} + \frac{2}{18(4-2)} = \frac{1}{2} \textnormal{\hskip 4mm charge, as desired. }
\]
\end{proof}

\begin{claim}\label{p3}
If $C$ is a path of length three, then $\ch_f(C) \geq v(C) \left(\frac{10}{3}\right)$.
\end{claim}
\begin{proof}
Let $C$ be the path $v_1v_2v_3v_4$. Let $u$ be the neighbour of $v_2$ not contained in $C$. By Lemma \ref{identificationlemma} applied to the path $v_{4}v_{3}v_{2}$ where $v_{2}$ is playing the role of $x$ in Lemma \ref{identificationlemma}, either $uv_1 \in E(G)$, or $v_1, v_2$, and $u$ are contained in an $M$-gadget with end $v_2$. By Corollary \ref{triangledegrees}, $uv_1$ is not an edge in $E(G)$, as otherwise $uv_1v_2$ is a triangle containing exactly two vertices of degree $3$. Thus by the structure of $M$-gadgets, there exist two disjoint triangles $T=abca$ and $T'=a'b'c'a'$ such that, up to relabelling, $u$ is adjacent to $a$ in $T$ and $a'$ in $T'$, and $v_1$ is adjacent to  $b$ in $T$ and $b'$ in $T'$.

Next, note that by Lemma \ref{identificationlemma} applied to the path $v_3v_2v_1$ with $v_{1}$ playing the role of $x$ in Lemma \ref{identificationlemma}, either $bb' \in E(G)$, or $v_1$ is the end of an $M$-gadget with $b$ and $b'$. First suppose $bb' \in E(G)$. In this case, note that by Corollary \ref{triangledegrees}, at most one of $a$ and $c$ has degree $3$. Thus $b$ does not send charge to at least two of its neighbours in Step 1. Symmetrically, at most one of $a'$ and $c'$ has degree $3$, and so $b'$ does not send charge to at least two of its neighbours in Step 1. Thus $v_1$ receives at least $\frac{1}{3}$ from each of $b$ and $b'$ in Step 1. By Claim \ref{leaves}, $v_4$ receives at least $\frac{4}{9}$ charge in Step 1. Finally, each of $v_2$ and $v_3$ receive at least $\frac{1}{6}$ by Observation \ref{onesixth}. It follows that 

\begin{align*}
    \ch_f(C) & \geq 4(3) + 2\left(\frac{1}{3} \right) + \frac{4}{9} + 2\left(\frac{1}{6}\right) \\
    &> 4\left(\frac{10}{3}\right),\textnormal{\ as desired. }
\end{align*}

Thus we may assume that $bb'$ is not an edge in $G$. But then by Lemma \ref{identificationlemma} applied to the path $v_3v_2v_1$, we have that $v_1$, $b$, and $b'$ are contained in an $M$-gadget with end $v_1$. By Claim \ref{betterleaves}, $v_1$ thus receives at least $\frac{1}{2}$ charge during discharging. By a perfectly symmetrical argument, $v_4$ receives at least $\frac{1}{2}$ charge during discharging. As above, each of $v_2$ and $v_3$ receive at least $\frac{1}{6}$ by Observation \ref{onesixth}. It follows that

\begin{align*}
    \ch_f(C) & \geq 4(3) + 2\left(\frac{1}{2}\right) + 2\left(\frac{1}{6}\right) \\
    &= 4\left(\frac{10}{3}\right),\textnormal{\ as desired. }
\end{align*}
\aftermath
\end{proof}

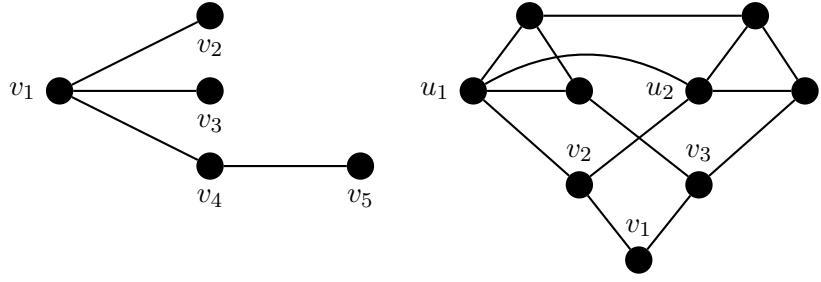
\begin{figure}
    \centering
    \begin{tikzpicture}
    \node[blackvertexv2] at (0,0) (v1) [label = left:$v_{1}$] {};
    \node[blackvertexv2] at (2,1) (v2) [label = below:$v_{2}$] {};
    \node[blackvertexv2] at (2,0) (v3) [label = below:$v_{3}$] {};
    \node[blackvertexv2] at (2,-1) (v4) [label = below:$v_{4}$] {};
    \node[blackvertexv2] at (4,-1) (v5) [label = below:$v_{5}$] {};
    \draw[thick,black] (v1)--(v4)--(v5);
    \draw[thick,black] (v1)--(v2);
    \draw[thick,black] (v1)--(v3);
\begin{scope}[xshift = 5.5cm]

\node[blackvertexv2] at (0,0) (x1) [label = left:$u_{1}$] {};
\node[blackvertexv2] at (.75,1) (x2) {};
\node[blackvertexv2] at (1.41,0) (x3){};
\draw[thick,black] (x1)--(x2)--(x3)--(x1);

\node[blackvertexv2] at (3,0) (x4) [label = left:$u_{2}$] {};
\node[blackvertexv2] at (3.75,1) (x5) {};
\node[blackvertexv2] at (4.41,0) (x6){};
\draw[thick,black] (x4)--(x5)--(x6)--(x4);

\node[blackvertexv2] at (2.2,-2.25) (v1) [label = above:$v_{1}$] {};
\node[blackvertexv2] at (1.41,-1.25) (v2) [label = above:$v_{2}$] {};
\node[blackvertexv2] at (3,-1.25) (v3) [label = above:$v_{3}$] {};
\draw[thick,black] (v1)--(v2);
\draw[thick,black] (v1)--(v3);
\draw[thick,black] (v2)--(x1);
\draw[thick,black] (v2)--(x4);
\draw[thick,black] (v3)--(x3);
\draw[thick,black] (v3)--(x6);
\draw[thick,black] (x2)--(x5);
\draw[thick,black, bend right = 30] (x4) to (x1);

\end{scope}
    \end{tikzpicture}
    \caption{On the left, the graph $C$ in Claim \ref{Y}, and on the right, we have the non-induced $M$-gadget if $u_{1}u_{2} \in E(G)$ which has potential at most $1$.}
    \label{claim8fig}
\end{figure}
\begin{claim}\label{Y}
If $V(C) = \{v_1, v_2, v_3, v_4, v_5\}$ and $E(C) = \{v_1v_2, v_1v_3, v_1v_4, v_4v_5\}$, then $\ch_{f}(C) \geq v(C)\left(\frac{10}{3}\right)$.
\end{claim}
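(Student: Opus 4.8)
The plan is to bound $ch_f(v)$ for each of the five vertices of $C$ separately and sum. Write $C = \{v_1,\dots,v_5\}$ with $v_1$ adjacent to $v_2,v_3,v_4$ and $v_4$ adjacent to $v_5$, so that $v_2,v_3,v_5$ are leaves of $C$ while $v_1,v_4$ are internal. All five vertices have degree three in $G$ and are non-isolated in $D_3(G)$, so none of them loses any charge during discharging. Since all three neighbours of $v_1$ lie in $C$, and hence have degree three, $v_1$ receives nothing and $ch_f(v_1)=3$. The vertex $v_4$ has exactly one neighbour $w$ outside $C$; as $w$ is adjacent to the degree-three vertex $v_4$ but $w\notin C$, $w$ has degree at least four, so $w$ sends $v_4$ at least $\frac16$ in Step~1 by Observation~\ref{onesixth}, giving $ch_f(v_4)\ge 3+\frac16$. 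It therefore suffices to prove that each leaf $v_i$, $i\in\{2,3,5\}$, receives at least $\frac12$ from its neighbourhood during discharging, since then
\[
ch_f(C)\ \ge\ 3+\Bigl(3+\tfrac12\Bigr)+\Bigl(3+\tfrac12\Bigr)+\Bigl(3+\tfrac16\Bigr)+\Bigl(3+\tfrac12\Bigr)\ =\ 15+\tfrac53\ =\ \tfrac{50}{3}\ =\ v(C)\cdot\tfrac{10}{3}.
\]

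To handle the leaves I would first locate an $M$-gadget around $v_1$. Note $v_2v_3\notin E(G)$, as otherwise $\{v_1,v_2,v_3\}$ is a triangle in $D_3(G)$, which is acyclic. Applying Lemma~\ref{identificationlemma} with $x=v_1$, $y=v_4$, $z=v_5$ (the hypotheses hold: $v_1v_4,v_4v_5\in E(G)$, all three vertices lie in $C$, and $D_3(G)$ has no triangle), we conclude that $v_1$ is the end of an $M$-gadget $M^\ast$ whose two split vertices are $v_2$ and $v_3$ and which contains neither $v_4$ nor $v_5$. Here I would use the structural fact that in any $M$-gadget the two split vertices each have degree exactly three in the gadget, and the two neighbours of a split vertex other than the end are non-adjacent (this is the only split of the degree-four vertex of the Moser spindle that avoids creating a $K_4-e$). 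Since $\deg_G(v_2)=\deg_G(v_3)=3$ and $v_1$ is already a neighbour of each, it follows that the two neighbours of $v_2$ outside $C$ are exactly its split-neighbours in $M^\ast$, and are non-adjacent; likewise for $v_3$.

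Next I would apply Lemma~\ref{identificationlemma} a second time, with $x=v_2$, $y=v_1$, $z=v_4$: since the two neighbours of $v_2$ other than $v_1$ are non-adjacent, $v_2$ is the end of an $M$-gadget containing both of them, so Lemma~\ref{betterleaves} gives that $v_2$ receives at least $\frac12$; the same argument applies verbatim to $v_3$. For $v_5$ I would bootstrap through $v_4$: applying Lemma~\ref{identificationlemma} with $x=v_4$, $y=v_1$, $z=v_2$, the two neighbours of $v_4$ other than $v_1$ are $v_5$ and $w$, so either $v_5w\in E(G)$ or $v_4$ is the end of an $M$-gadget $M^{\ast\ast}$ with split vertices $v_5$ and $w$. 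The first case is impossible, since then $\{v_4,v_5,w\}$ would be a triangle with exactly two vertices of degree three, contradicting Corollary~\ref{triangledegrees}. Hence $v_4$ is the end of $M^{\ast\ast}$; as above, the two neighbours of $v_5$ outside $C$ are its split-neighbours in $M^{\ast\ast}$ and are non-adjacent, so a final application of Lemma~\ref{identificationlemma} (with $x=v_5$, $y=v_4$, $z=v_1$) shows $v_5$ is the end of an $M$-gadget, and Lemma~\ref{betterleaves} gives that $v_5$ receives at least $\frac12$. Summing as above completes the proof.

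The main obstacle, and what distinguishes this case from the earlier ones, is that the weaker bound of $\frac49$ per leaf coming from Claim~\ref{leaves} is not enough: it falls short of $\frac{50}{3}$ by exactly $\frac1{18}$, so one genuinely needs all three leaves to be ends of $M$-gadgets in order to invoke Lemma~\ref{betterleaves}. The real work lies in the repeated chaining of Lemma~\ref{identificationlemma} (first putting $v_1$ at the end of an $M$-gadget, then transferring this to $v_2$ and $v_3$, and via the $v_4$-bootstrap to $v_5$), together with the supporting structural description of $M$-gadgets; the degree counting that forces the split vertices to have gadget-degree three, and the Moser-spindle case check that forces their non-end neighbours to be non-adjacent, are the technical heart.
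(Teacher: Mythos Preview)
Your overall strategy --- forcing all three leaves $v_2, v_3, v_5$ to receive at least $\tfrac12$ --- is sound and differs from the paper, which only pushes $v_2$ and $v_3$ up to $\tfrac12$ while bounding $v_5$ by $\tfrac49$ via Claim~\ref{leaves}. As you correctly observe, that weaker bound falls $\tfrac{1}{18}$ short: the paper's final displayed inequality in this claim, $5(3)+2\cdot\tfrac12+\tfrac49+\tfrac16 > 5\cdot\tfrac{10}{3}$, is in fact false ($\tfrac{299}{18}<\tfrac{300}{18}$). Your chaining of Lemma~\ref{identificationlemma} through the $M$-gadget at $v_4$ to upgrade $v_5$ is exactly the missing ingredient.

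There is, however, one gap in your argument. When you conclude that the two out-of-$C$ neighbours $u_1,u_2$ of $v_2$ are non-adjacent, you have only established $u_1u_2\notin E(M^\ast)$: the $M$-gadget is merely a subgraph of $G$, and nothing you have said rules out $u_1u_2\in E(G)$. You need non-adjacency in $G$ to select the $M$-gadget branch of Lemma~\ref{identificationlemma} at $v_2$. The fix is short: if $u_1u_2\in E(G)$, then $M^\ast+u_1u_2$ is a proper subgraph of $G$ (as $v_4,v_5\notin V(M^\ast)$) with nine vertices, fourteen edges, and two vertex-disjoint triangles, hence potential at most $5\cdot 9-3\cdot 14-2=1$, contradicting Lemma~\ref{potentiallemma}. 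The same remark and the same fix apply to the out-of-$C$ neighbours of $v_3$ (using $M^\ast$ again) and of $v_5$ (using $M^{\ast\ast}$, which omits $v_1$ and $v_2$).
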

\begin{proof}
Let $u_1, u_2$ be the neighbours of $v_2$ that are not in $C$. Let $w_1, w_2$ be the neighbours of $v_3$ not in $C$. Note by Lemma \ref{identificationlemma} applied to the path $v_{4}v_{1}v_{2}$ with $v_{2}$ playing the role of $x$ from Lemma \ref{identificationlemma}, either $u_1u_2 \in E(G)$ or $u_1, u_2$, and $v_2$ are in an $M$-gadget with end $v_2$. Symmetrically, either $w_1w_2 \in E(G)$ or $w_1, w_2,$ and $v_3$ are in an $M$-gadget with end $v_3$. We will aim to show $u_1u_2 \not \in E(G)$ (and by a symmetrical argument, $w_1w_2 \not \in E(G)$) as otherwise we are done. To see this, suppose not. Then $u_1u_2 \in E(G).$ By Lemma \ref{identificationlemma} applied to the path $v_{5}v_{4}v_{1}$ with $v_{1}$ playing the role of $x$ in Lemma \ref{identificationlemma}, since $v_2v_3 \not \in E(C)$ it follows that $v_1, v_2,$ and $v_3$ are in an $M$-gadget with end $v_1$. It follows that $u_{1}$ and $u_{2}$ are in distinct triangles $T_{1}$ and $T_{2}$, and that $u_{1}u_{2}$ does not belong to this $M$-gadget. But then subgraph induced by the vertices belonging to the $M$-gadget has potential at most $1$, contradicting Lemma \ref{potentiallemma}.
So we may assume $u_1u_2 \not \in E(G)$, and by symmetry $w_1w_2 \not \in E(G)$. By Lemma \ref{identificationlemma}, it follows that each of $v_2$ and $v_3$ is the end of an $M$-gadget with its neighbours outside $C$. By Lemma \ref{betterleaves}, $v_2$ and $v_3$ each receives at least $\frac{1}{2}$ during discharging. By Claim \ref{leaves}, $v_5$ receives at least $\frac{4}{9}$ in Step 1, and by Observation \ref{onesixth}, $v_4$ receives at least $\frac{1}{6}$. It follows that 
\begin{align*}
    \ch_f(C) &\geq 5(3) + 2\left(\frac{1}{2}\right) +\frac{4}{9} + \frac{1}{6} \\
    &> 5\left(\frac{10}{3}\right),\textnormal{\ as desired. }
\end{align*}
\aftermath
\end{proof}

\begin{figure}
    \centering
    \begin{tikzpicture}
    \node[blackvertexv2] at (0,0) (v1) [label = above:$v_{1}$] {};
    \node[blackvertexv2] at (2,0) (v2) [label = above:$v_{2}$] {};
    \node[blackvertexv2] at (4,1) (v5) [label = above:$v_{5}$] {};
    \node[blackvertexv2] at (4,-1) (v6) [label = above:$v_{6}$] {};
    \node[blackvertexv2] at (-2,1) (v3) [label = above:$v_{3}$] {};
    \node[blackvertexv2] at (-2,-1) (v4) [label = above:$v_{4}$] {};
    \draw[thick,black] (v3)--(v1)--(v2)--(v5);
    \draw[thick,black] (v4)--(v1);
    \draw[thick,black] (v6)--(v2);
    \end{tikzpicture}
    \caption{The graph $C$ in Claim \ref{>-<}.}
    \label{fig:my_label}
\end{figure}
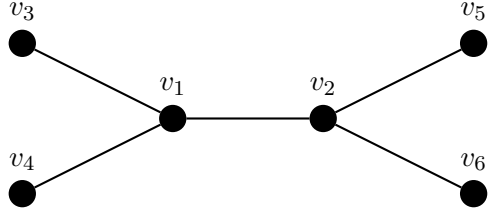
\begin{claim}\label{>-<}
If $V(C) = \{v_1, v_2, v_3, v_4, v_5, v_6\}$ and $E(C) = \{v_{1}v_{2},v_{1}v_{3},v_{1}v_{4},v_{2}v_{5},v_{2}v_{6}\}$, then $\ch_f(C) \geq v(C) \left(\frac{10}{3}\right)$.
\end{claim}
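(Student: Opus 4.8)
The plan is to reduce everything to a lower bound on the charge received by each of the four leaves $v_3,v_4,v_5,v_6$. Every vertex of $C$ has degree three, and every neighbour of $v_1$ (namely $v_2,v_3,v_4$) and of $v_2$ (namely $v_1,v_5,v_6$) lies in $C$ and hence has degree three; so neither $v_1$ nor $v_2$ sends or receives charge in any step, and $ch_f(v_1)=ch_f(v_2)=3$. Each leaf $v_i$ ($i\in\{3,4,5,6\}$) also sends no charge (it has degree three and is not isolated in $D_3(G)$), so $ch_f(v_i)=3+r_i$ with $r_i\ge0$ the charge it receives. Therefore it suffices to prove $r_i\ge\tfrac12$ for each $i\in\{3,4,5,6\}$, since then
\[
ch_f(C)\ \ge\ 2\cdot 3+4\left(3+\tfrac12\right)\ =\ 20\ =\ v(C)\left(\frac{10}{3}\right).
\]
Note there is no slack here: the bound $r_i\ge\tfrac49$ from Claim~\ref{leaves} would not suffice, so each leaf must be forced to meet $r_i\ge\tfrac12$.

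First I would locate two $M$-gadgets. Since $D_3(G)$ is acyclic and $v_3v_4\notin E(C)$, we have $v_3v_4\notin E(G)$. Apply Lemma~\ref{identificationlemma} with $(x,y,z)=(v_1,v_2,v_5)$ (its hypotheses hold since $v_1v_2,v_2v_5\in E(G)$ and, $D_3(G)$ being acyclic, $v_2,v_5$ lie in no triangle of degree-three vertices); the two neighbours of $v_1$ other than $v_2$ are $v_3,v_4$, and as $v_3v_4\notin E(G)$ the lemma forces $v_1$ to be the end of an $M$-gadget $G_1\subseteq G$ with $v_2\notin V(G_1)$. Because the end of an $M$-gadget has degree two in it and $v_2\notin V(G_1)$, the two neighbours of the end in $G_1$ — i.e.\ the split vertices of $G_1$ — are exactly $v_3$ and $v_4$. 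Symmetrically, applying Lemma~\ref{identificationlemma} with $(x,y,z)=(v_2,v_1,v_3)$ makes $v_2$ the end of an $M$-gadget $G_2\subseteq G$ with split vertices $v_5,v_6$ and $v_1\notin V(G_2)$.

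Now fix $i\in\{3,4\}$ (for $i\in\{5,6\}$ the argument is identical with $G_2,v_2,v_1$ replacing $G_1,v_1,v_2$). A split vertex of an $M$-gadget has degree three in it, so $v_i$ has degree three in $G_1$, and hence its neighbours in $G_1$ and in $G$ coincide; writing them as $v_1,p,q$, the vertices $p,q$ are precisely the two neighbours of $v_i$ lying outside $C$, and $pq\notin E(G_1)$ (a split vertex of an $M$-gadget lies in no triangle, which is exactly what the requirement that the split create no $K_4-e$ guarantees). I claim $pq\notin E(G)$: otherwise $G_1+pq$ is a subgraph of $G$ with $v(G_1+pq)=v(G_1)=9<v(G)$ (as $v_2\notin V(G_1)$), with $e(G_1+pq)=e(G_1)+1=14$, and with $T^3(G_1+pq)\ge T^3(G_1)\ge2$ (an $M$-gadget contains the two vertex-disjoint triangles of the Moser spindle not incident to the split vertex), so
\[
p(G_1+pq)\ =\ 5\cdot 9-3\cdot 14-T^3(G_1+pq)\ \le\ 45-42-2\ =\ 1\ <\ 3,
\]
contradicting Lemma~\ref{potentiallemma}. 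Hence $pq\notin E(G)$, and applying Lemma~\ref{identificationlemma} with $(x,y,z)=(v_i,v_1,v_2)$ now yields that $v_i,p,q$ lie in an $M$-gadget with end $v_i$. Since $v_i$ is a leaf of the tree $C$ (with $v(C)=6\ge3$) whose two neighbours outside $C$, namely $p$ and $q$, lie in an $M$-gadget with end $v_i$, Lemma~\ref{betterleaves} gives $r_i\ge\tfrac12$, as required.

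The crux I expect is ruling out $pq\in E(G)$: with $v_1$ and $v_2$ both pinned at charge $3$, the discharging has no margin, so unlike in the proof of Claim~\ref{Y} one cannot recover the deficit by squeezing a little more charge out of $p$ and $q$ in the adjacent case, and must instead exploit the rigidity of the two forced $M$-gadgets to build a subgraph of potential less than $3$ and invoke Lemma~\ref{potentiallemma}.
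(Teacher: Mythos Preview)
Your proof is correct and follows the same overall strategy as the paper: establish the $M$-gadget at $v_1$ (and symmetrically at $v_2$) via Lemma~\ref{identificationlemma}, then show each of the four leaves receives at least $\tfrac12$ charge so that Lemma~\ref{betterleaves} applies, giving exactly $ch_f(C)\ge 20$.

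The one genuine difference is how the case $pq\in E(G)$ (the paper's $u_1u_2\in E(G)$) is handled. The paper does \emph{not} rule this case out; instead it observes that, inside the $M$-gadget $G_1$, the neighbours $u_1,u_2$ of $v_3$ each lie in a triangle with a neighbour $w_1,w_2$ of $v_4$, and since $u_1,u_2,w_1,w_2\notin C$ they all have degree $\ge 4$. Combined with the assumed edge $u_1u_2$, this forces $\deg_3(u_j)\le\deg(u_j)-2$, so each $u_j$ sends $\ge\tfrac13$ to $v_3$; together with Claim~\ref{leaves} for the other three leaves, the totals still reach $20$. You instead eliminate the case outright: $G_1+pq$ is a proper subgraph with $9$ vertices, $14$ edges and $T^3\ge 2$, hence potential $\le 1$, contradicting Lemma~\ref{potentiallemma}. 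This is exactly the trick used at the end of Corollary~\ref{ind-p4}, and it arguably streamlines the argument by replacing an ad hoc charge computation with a uniform structural contradiction; the paper's route, on the other hand, stays entirely within the discharging framework and avoids re-invoking the potential lemma.
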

\begin{proof}
Let $u_1, u_2$ be the neighbours of $v_3$ that are not in $C$. Let $w_1, w_2$ be the neighbours of $v_{4}$ not in $C$. Note that by Lemma \ref{identificationlemma} applied to the path $v_{2}v_{1}v_{3}$ with $v_{3}$ playing the role of $x$, either $u_1u_2 \in E(G)$ or $u_1, u_2$, and $v_3$ are in an $M$-gadget with end $v_3$. By symmetry, either $w_1w_2 \in E(G)$ or $w_1, w_2,$ and $v_4$ are in an $M$-gadget with end $v_4$.  We will aim to show $u_1u_2 \not \in E(G)$ (and by a symmetrical argument, $w_1w_2 \not \in E(G)$) as otherwise we are done. To see this, suppose not. Then $u_1u_2 \in E(G).$ By Lemma \ref{identificationlemma} applied to the path $v_5 v_2v_1$ with $v_{1}$ playing the role of $x$ in Lemma \ref{identificationlemma}, since $v_3v_4 \not \in E(C)$ it follows that $v_1, v_3,$ and $v_4$ are in an $M$-gadget with end $v_1$. Similar to as in Claim \ref{Y}, this $M$-gadget is not induced, (as the edge $u_{1}u_{2}$ is not in the $M$-gadget), and hence the subgraph induced by the vertices of the $M$-gadget has potential at most $1$, contradicting Lemma \ref{potentiallemma}.

So we may assume $u_1u_2 \not \in E(G)$, and by symmetry $w_1w_2 \not \in E(G)$. By symmetry, $v_5$ is not contained in a triangle with its neighbours outside $C$, and nor is $v_6$. By Lemma \ref{identificationlemma}, it follows that each of $v_3$, $v_4$, $v_5$, and $v_6$ is the end of an $M$-gadget with its neighbours outside $C$. By Lemma \ref{betterleaves}, $v_3$, $v_4$, $v_5$, and $v_6$ each receive at least $\frac{1}{2}$ during discharging. It follows that 
\begin{align*}
    \ch_f(C) &\geq 6(3) + 4\left(\frac{1}{2}\right)  \\
    &= 6\left(\frac{10}{3}\right),\textnormal{\ as desired. }
\end{align*}
\aftermath
\end{proof}

 We are now equipped to prove Theorem \ref{maintheorem}.

\begin{proof}[Proof of Theorem \ref{maintheorem}]Suppose not. Let $G$ be a vertex-minimum counterexample. By Lemma \ref{ind-p4}, no component of $D_{3}(G)$ contains an induced path of length 4. By Lemma \ref{acyclic}, $D_3(G)$ is acyclic; and by Corollary \ref{nomorethansix}, every component of $D_3(G)$ has at most six vertices. Since every vertex in $D_{3}(G)$ has degree $3$, it follows that every component of $D_{3}(G)$ falls into one of the claims from Claims \ref{isolated} through \ref{p3}. Therefore it follows that that $\textnormal{KY}(G) \leq 0$. Moreover, by Lemma \ref{bigcomp}, $D_3(G)$ contains a component with at least three vertices. We break into cases depending on the structure of the components in $D_3(G)$. \\

\noindent
\textbf{Case 1: $D_3(G)$ contains a component $C$ with $v(C) \geq 3$ such that $C$ is any of the graphs described in Claims \ref{p3} through \ref{>-<}.} \\
In this case, $C$ contains a path $P$ of length two ending with a non-leaf vertex, $v$. Thus, by applying Lemma \ref{identificationlemma} to $P$ with $v$ playing the role of $x$, we get that either $x$ is contained in a triangle with its neighbours not on $P$, or that $x$ is the end of an $M$-gadget, $H$. By Corollary \ref{triangledegrees} and Lemma \ref{acyclic}, $x$ is not contained in a triangle with another vertex in $C$, and so it follows that $x$ is the end of an $M$-gadget, $H$. But $T^3(H) = 2$, and so $T^3(G) \geq 2.$ It follows that $p(G) \leq \textnormal{KY}(G) -2 \leq -2$, and so $G$ is not a counterexample.
\\
\vskip 4mm
\noindent
\textbf{Case 2: $D_3(G)$ contains no components described in Case 1, but contains a star $H$ with four vertices.} \\
Let $V(H) = \{v_1, v_2, v_3,v_4\}$ and $E(H) = \{v_4v_1, v_4v_2, v_4v_3\}$. By applying Lemma \ref{identificationlemma} to each of the paths $v_1v_4v_3,\ v_1v_4v_2$, and $v_2v_4v_3$, (where in each case the last vertex of the path is playing the role of $x$ in Lemma \ref{identificationlemma}) we see that $v_1,v_2,$ and $v_3$ are each the ends of $M$-gadgets, or that they are contained in triangles with their neighbours outside $H$. As in the above case, if $G$ contains an $M$-gadget, then $p(G) \leq -2$, and $G$ is not a counterexample. Thus we may assume neither $v_1, v_2$ nor $v_3$ is the end of an $M$-gadget. Let $T_1$, $T_2$, and $T_3$ be the triangles containing $v_1, v_2$ and $v_3$, respectively. By Corollary \ref{triangledegrees} and Lemma \ref{acyclic}, these triangles are distinct. If $T^3(T_1 \cup T_2 \cup T_3) \geq 2$, then $p(G) \leq -2$ and $G$ is not a counterexample. Thus we may assume the triangles share some vertices. Note that no two triangles in $\{T_1, T_2, T_3\}$ share two vertices, since $G$ contains no $K_4-e$ subgraph by Lemma \ref{k4-e}. There are thus two cases to consider: either there exists a single vertex contained in all three triangles and the triangles are otherwise disjoint, or $V(T_1) \cap V(T_2) \cap V(T_3) = \emptyset$ and every pair of triangles shares a distinct vertex. If every pair of triangles shares a distinct vertex, then since $H \cup T_1 \cup T_2 \cup T_3$ is $4$-critical (we leave this verification for the reader), $G = H \cup T_1 \cup T_2 \cup T_3$. But then $p(G) = 5(7)-3(12)-1 = -2$, and so $G$ is not a counterexample. Thus we may assume that $T_1 \cap T_2 \cap T_3 = \{u\}$, for some vertex $u \in G$. Note then that $\deg(u) \geq 6$. Moreover, $u$ is adjacent to at least three vertices that are adjacent to $H$ but not in $H$; thus $u$ neighbours at least three vertices of degree greater than three. It follows that $u$ sends at least $\frac{8}{9}$ charge to each of $v_1, v_2$, and $v_3$ in Step 1 of the discharging process. Thus $\ch_f(H) \geq 3\left(\frac{8}{9}\right) + 4(3) = \frac{44}{3} = 4\left(\frac{10}{3}\right) + \frac{4}{3}$. Note that every other component $C$ in $D_3(G)$ has final charge at least $v(C) \left(\frac{10}{3}\right)$ and every vertex of degree at least four has final charge at least $\frac{10}{3}$. Thus the sum of the charges is at least $v(G)\left(\frac{10}{3}\right) + \frac{4}{3}$. Thus $\textnormal{KY}(G) \leq -2$, and so $p(G) \leq -2$ (and in fact $p(G) \leq -3$ as $G$ contains at least one vertex-disjoint triangle). This contradicts the fact that $G$ is a counterexample.
\\
\begin{figure}
    \centering
    \begin{tikzpicture}
    \node[blackvertexv2] at (0,0) (v1) [label = below:$v_{1}$] {};
    \node[blackvertexv2] at (-1,1) (v2)[label = left:$v_{2}$] {};
    \node[blackvertexv2] at (0,1) (v3) [label = left:$v_{3}$] {};
    \node[blackvertexv2] at (1,1) (v4) [label = left:$v_{4}$] {};
    \node[blackvertexv2] at (-1,2) (u1) {};
    \node[blackvertexv2] at (0,2) (u2) {};
    \node[blackvertexv2] at (1,2) (u3) {};
    \draw[thick,black] (v1)--(v2);
    \draw[thick,black] (v1)--(v3);
    \draw[thick,black] (v1)--(v4);
    \draw[thick,black] (v2)--(u2);
    \draw[thick,black] (v2)--(u3);
    \draw[thick,black] (v3)--(u1);
    \draw[thick,black] (v3)--(u3);
    \draw[thick,black] (v4)--(u1);
    \draw[thick,black] (v4)--(u2);
    \draw[thick,black] (u1)--(u2)--(u3);
    \draw[thick,black,bend left = 30] (u1) to (u3);
    \begin{scope}[xshift=5cm]
    \node[blackvertexv2] at (0,0) (v1) [label = below:$v_{1}$] {};
    \node[blackvertexv2] at (-1,1) (v2)[label = left:$v_{2}$] {};
    \node[blackvertexv2] at (0,1) (v3) [label = left:$v_{3}$] {};
    \node[blackvertexv2] at (1,1) (v4) [label = left:$v_{4}$] {};
    \node[blackvertexv2] at (-1,2) (u1) {};
    \node[blackvertexv2] at (0,2) (u2) {};
    \node[blackvertexv2] at (1,2) (u3) {};
    \draw[thick,black] (v1)--(v2);
    \draw[thick,black] (v1)--(v3);
    \draw[thick,black] (v1)--(v4);
    \draw[thick,black] (v2)--(u1);
    \draw[thick,black] (v3)--(u2);
    \draw[thick,black] (v4)--(u3);
    \node[blackvertexv2] at (0,3) (u) [label = left:$u$] {};
    \draw[thick,black] (u)--(u1);
    \draw[thick,black] (u)--(u2);
    \draw[thick,black] (u)--(u3);
    \draw[thick,black] (u) to (v2);
    \draw[thick,black,bend right= 20] (u) to (v3);
    \draw[thick,black,] (u) to (v4);
    \end{scope}
    \end{tikzpicture}
    \caption{On the left, we have the $4$-critical graph $H \cup T_{1} \cup T_{2} \cup T_{3}$ in Case 2, and on the right we have the other possibility, where we have a vertex $u$ which has degree at least $6$.} 
    \label{Case2}
\end{figure}
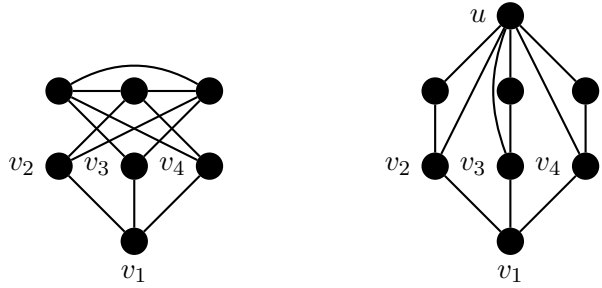

\vskip 4mm
\noindent
\textbf{Case 3: $D_3(G)$ contains no components described in Cases 1 or 2, but contains a path $H$ of length 2.} \\
Let $H = v_1v_2v_3$. Note that by Claim \ref{p2}, the final charge of $H$ is strictly greater than $v(H) \left( \frac{10}{3}\right)$. Moreover, every other component $C$ of $D_3(G)$ has final charge at least $v(C) \left(\frac{10}{3}\right)$ and every vertex of degree at least four has final charge at least $\frac{10}{3}$. It follows that the sum of the charges is strictly greater than $v(G) \left(\frac{10}{3}\right)$, and so $\textnormal{KY}(G) < 0$. Since $\textnormal{KY}(G)$ is integral, it follows that $\textnormal{KY}(G) \leq -1$. By Lemma \ref{identificationlemma} applied to the path $v_1v_2v_3$, we find that either $G$ contains an $M$-gadget or a triangle; and so since $M$-gadgets contain triangles, it follows that $T^3(G) \geq 1$. Thus $p(G) = \textnormal{KY}(G)-T^3(G) \leq -2$, and so $G$ is not a counterexample.
\end{proof}

\begin{ack}
The authors would like to thank the referees; in particular, one referee's careful reading  drastically improved the quality of the paper and fixed numerous errors. The authors would also like to thank Alexandr Kostochka and Jingwei Xu for pointing out errors in an earlier version of this paper, Daniel Cranston for pointing out some key observations about $k$-Ore graphs as well as James Davies for sharing his construction with us (see Figure \ref{jamesgraph}). Lastly, the first author would like to thank Sophie Spirkl and Gary MacGillivray for comments on a version of this paper which appeared in the author's PhD thesis.
\end{ack}
\bibliographystyle{plain}
\bibliography{trianglefreebib}

\end{document}